\documentclass[11pt]{article}
\usepackage{amssymb,amsmath,accents}
\usepackage{amscd}
\usepackage{amsfonts,amsthm,mathrsfs}
\usepackage{setspace}
\usepackage{cases,empheq}
\usepackage{subcaption}
\usepackage{diagbox}
\usepackage{url}

\usepackage{graphicx, xcolor}

\numberwithin{equation}{section}
\newtheorem{thm}{Theorem}
\newtheorem{cor}[thm]{Corollary}
\newtheorem{lem}[thm]{Lemma}
\newtheorem{prop}[thm]{Proposition}

\newtheorem{definition}[thm]{Definition}
\newtheorem{rem}[thm]{Remark}

\begin{document}

\title{Remarks on temperature-dependent Cahn--Hilliard equations \vspace{0.2em}\\
\normalsize{Dedicated to Professor Chun Liu on the occasion of his 60th birthday}}

\author{Mi-Ho Giga$^1$, Yoshikazu Giga$^2$, Naoto Kajiwara$^3$ and Kazuyuki Tsuda$^4$ \vspace{0.7em}\\ 
\centerline{{\footnotesize $^1$Graduate School of Mathematical Sciences}}\vspace{-0.3em}\\
\centerline{{\footnotesize The University of Tokyo}}\vspace{-0.3em}\\
\centerline{{\footnotesize 3-8-1 Komaba, Meguro-ku, Tokyo 153-8914, Japan}}\vspace{-0.3em}\\
\centerline{{\footnotesize mihogiga-ec@g.ecc.u-tokyo.ac.jp}} \vspace{0.7em}\\
\centerline{{\footnotesize $^2$Graduate School of Mathematical Sciences}}\vspace{-0.3em}\\
\centerline{{\footnotesize The University of Tokyo}}\vspace{-0.3em}\\
\centerline{{\footnotesize 3-8-1 Komaba, Meguro-ku, Tokyo 153-8914, Japan}}\vspace{-0.3em}\\
\centerline{{\footnotesize labgiga@ms.u-tokyo.ac.jp}} \vspace{0.7em}\\
\centerline{{\footnotesize $^3$Applied Physics Course, Department of Electrical, Electronic and Computer Engineering}}\vspace{-0.3em}\\
\centerline{{\footnotesize Gifu University}}\vspace{-0.3em}\\
\centerline{{\footnotesize 1-1 Yanagido, Gifu City, Gifu 501-1193, Japan}} \vspace{-0.3em}\\
\centerline{{\footnotesize kajiwara.naoto.p4@f.gifu-u.ac.jp}} \vspace{0.7em}\\
\centerline{{\footnotesize $^4$Center for Fundamental Education}}\vspace{-0.3em}\\
\centerline{{\footnotesize Kyushu Sangyo University}}\vspace{-0.3em}\\
{\scriptsize 2-3-1 Matsukadai, Higashi-ku, Fukuoka 813-8503, Japan}\vspace{-0.3em}\\
\centerline{{\footnotesize k-tsuda@ip.kyusan-u.ac.jp}}}
\date{}
\maketitle

\renewcommand{\thefootnote}{\fnsymbol{footnote}}
\footnote[0]{MSC 2020 35Q79; 74N20, 80A17, 80A22} 
\footnote[0]{keywords temperature-dependent Cahn-Hilliard equation, the second law of thermodynamics, unique solvability, stability of equilibria.}
\renewcommand{\thefootnote}{\arabic{footnote}}

\begin{abstract}
We recall several derivations of temperature-dependent Cahn-Hilliard equations.
 The resulting equations are systems of mixed order.
 For a suitable subclass of equations we discuss its local-in-time well-posedness based on $L^p$-maximal regularity theory.
 We also discuss stability of equilibria.
\end{abstract}
\maketitle

\section{Introduction} \label{SI}

The Cahn-Hilliard equation is a typical model to describe phase separation.
 However, it is often discussed in isothermal case; see for example a book by A.~Miranville \cite{Mi}.
 A recent seminal paper by De Anna et al.\ \cite{DLSS} derived two temperature-dependent Cahn-Hilliard equations which fulfill the second law of thermodynamics.

The purpose of this paper is threefold.
 We derive temperature-dependent Cahn-Hilliard equations in a way different from \cite{DLSS} but more classical.
 Then we discuss local-in-time solvability of these equations.
 We finally discuss stability of an equilibrium which may not be constant.

Let us first recall an isothermal Cahn-Hilliard equation.
 Let $\Omega$ be a domain in $\mathbb{R}^n$ where the binary mixture of materials (fluids) occupies.
 It consists of two substances and (local) mass fraction (or concentration) of the first substance is denoted by $w=w(x,t)$ which depends on $x\in\Omega$ and time $t\ge0$.
 This unknown $w$ is often called the order parameter or the phase field variable.
 A dimensionless form of the classical Cahn-Hilliard equation is of the form
\[
    \partial_t w=\Delta\mu, \quad
    \mu=-\Delta w+\frac{\partial\psi_0}{\partial w}(w).
\]
Here $\psi_0=\psi_0(w)$ denotes the bulk energy density.
 The quantity $\mu$ is a (generalized) chemical potential.
 If we introduce
\[
    \psi(w,\nabla w)
    =\psi_0(w)+\frac12|\nabla w|^2,
\]
the chemical potential $\mu$ can be written as
\begin{align*}
    \mu=\delta_w\psi:=&
    \psi_w-\operatorname{div}\psi_{\nabla w}\\
    =&\psi_{0w}-\Delta w
\end{align*}
where $\psi_w=\partial_w\psi=\partial\psi/\partial w$ and
\[
    \psi_{\nabla w}=\left.\nabla_z \psi(w,z)\right|_{z=\nabla w}, \quad
    \nabla_z\psi(w,z)=(\psi_{z_1},\ldots,\psi_{z_n}).
\]
The system was originally introduced in \cite{C}, \cite{CH1}, \cite{CH2} to describe a process of phase separation by considering a double-well potential $\psi_0$.

\vspace{0.5em}\noindent
\textbf{Derivation.}
 To include temperature we take a classical approach.
 We start with macroscopic motionless thermal fluid.
 Let $\psi=\psi(w,\theta,\nabla w)$ be the Helmholtz energy and $\eta=\eta(w,\theta,\nabla w)$ be the corresponding entropy defined by $\eta=-\psi_\theta$, where $\theta$ denotes the temperature which is a positive quantity.
 Let $\epsilon=\psi+\theta\eta$ denote the internal energy.
 We start with
\begin{equation} \label{EIC}
    \partial_t w=-\operatorname{div}j, \quad
    \partial_t\epsilon=\operatorname{div}j_e,
\end{equation}
where $j$ denotes the mass flux and $j_e$ denotes the energy flux.
 (As in Section~\ref{SD}, this system itself is derived from conservation of mass, momentum and energy.)
 We consider the heat flux $q$.
 The second law of thermodynamics leads
\begin{equation} \label{EIE}
    \partial_t \eta+\operatorname{div}j_s=\Delta\ge0,
\end{equation}
where $j_s$ is the \emph{entropy flux}.
 There are several choices of $j_e$ and $j_s$.
 We discuss three types
\begin{description}
    \item[\rm Type I] $j_s=q/\theta$ while $j_e=-q+j_m$ with some flux $j_m$;
    \item[\rm Type I\hspace{-0.1em}I] $j_e=-q$ while $j_s=q/\theta+j_\alpha$ with some flux $j_\alpha$;
    \item[\rm Type I\hspace{-0.1em}I\hspace{-0.1em}I] $j=\bar{u}w$, $j_s=\bar{u}\eta+q/\theta$ with some vector field $\bar{u}$, where $j_e=-q+j_m$ for some flux $j_m$.   
\end{description}
In each case, we take $j_m$, $j_\alpha$, $\bar{u}$ and $j$ in a suitable way to make $\Delta\ge0$.
 We also use ``maximizing entropy principle (\cite{HM}, \cite{MP})" to determine $j$.
 We often assume Fourier's law $q=-d(w,\theta)\nabla\theta$ for the heat flux which also includes the case $q=\nabla(1/\theta)$ by taking $d=\theta^{-2}$.

In \cite{DLSS}, Type I and Type I\hspace{-0.1em}I\hspace{-0.1em}I are derived by assuming Fourier's law with constant $d$.
 Instead of starting from \eqref{EIC}, they started from
\[
    \partial_t w+\operatorname{div}(\bar{u}w)=0,
\]
where $\bar{u}$ is microscopic velocity.
 In \cite{DLSS}, they introduced total energy and dissipation energy to derive conservative force and dissipative force.
 The microscopic velocity $\bar{u}$ is obtained by balance of both forces.
 This idea is used in many problems especially isothermal case; see e.g.\ a review paper \cite{GKL}.
 In other words, $j$ is determined first.
 Once $j$ is fixed, then \eqref{EIE} yields the desired equations.
 For Type I, the resulting equation is
\begin{equation}
    \begin{gathered} \label{EICHE}
    \partial_t w=-\operatorname{div}j, \quad
    j=-\gamma\nabla\mu,\quad
    \mu=\delta_w \psi \\
    \partial_t\epsilon=\operatorname{div}(d\nabla\theta)-\operatorname{div}(\mu j+\psi_{\nabla w}\operatorname{div}j)
    \end{gathered}
\end{equation}
with
\[
    \partial_t\eta+\operatorname{div}(q/\theta)=\Delta, \quad
    \Delta=d|\nabla\theta|^2/\theta^2+\gamma|\nabla\mu|^2/\theta,
\]
where $d\ge0$, $\gamma\ge0$.
 In \cite{DLSS}, $\nabla\mu$ is replaced by $\nabla\mu+a\nabla\partial_tw$ with $a\ge0$ since they introduce a regularizing term in their dissipation energy.
 Then resulting equation is regularized by this modification.
 In the case of Type I,
\[
    \theta\Delta=d|\nabla\theta|^2/\theta
    +\gamma|\nabla\mu+a\nabla\partial_tw|^2
    +a|\partial_tw|^2
\]
in \cite{DLSS}.
 The assumption Type I\hspace{-0.1em}I is also popular as derived by \cite{AP} and \cite{FK1}. 
 In the case that $\psi$ is independent of $w$, Type I\hspace{-0.1em}I is also discussed in \cite{BD} and \cite{BS}.

Let us explain our derivation which is along the line of \cite{MP}.
 We shall start with \eqref{EIC} and calculate the entropy production $\theta\Delta$ after suitable choice of $j_m$, $j_\alpha$ or $\bar{u}$.
 We postulate the Fourier's law for $q$ and invoke the maximizing entropy principle to determine $j$.
 For Type I, our method also yields \eqref{EICHE}.
 We also derive Type I\hspace{-0.1em}I and Type I\hspace{-0.1em}I\hspace{-0.1em}I based on our classical method.

\vspace{0.5em}\noindent
\textbf{Local-in-time well-posedness.}
 We next discuss the unique existence of local-in-time smooth solution for smooth initial data.
 We rewrite the equation in the evolution equation of $w$ and $\theta$.
 It turns out that the equation \eqref{EICHE} becomes
\begin{equation}
    \begin{gathered} \label{EICHT}
    w_t=-\operatorname{div}j, \quad
    j=-\gamma\nabla\mu,\quad
    \mu=\delta_w \psi, \\
    \kappa\theta_t=\operatorname{div}( d\nabla\theta)+\gamma|\nabla\mu|^2
    +\theta\{\eta_w\operatorname{div}j+\eta_{\nabla w}\cdot\nabla\operatorname{div}j\}.
    \end{gathered}
\end{equation}
We assume that
\[
    \psi(w,\theta,\nabla w)
    =\psi_0(w,\theta)
    +\frac{\varepsilon(w,\theta)}{2}|\nabla w|^2, \quad
    \varepsilon>0.
\]
Here $\kappa=-\theta\psi_{\theta\theta}$ denotes the volume specific heat contents.
 We consider this problem either when $\Omega$ is a smooth bounded domain with boundary condition
\begin{equation} \label{EINB}
    \frac{\partial w}{\partial\nu}=0, \quad
    \frac{\partial\mu}{\partial\nu}=0
    \quad\text{and}\quad
    \frac{\partial\theta}{\partial\nu}=0
    \quad\text{on}\quad \partial\Omega
\end{equation}
or when $\Omega$ is $\mathbb{T}^n$ (periodic boundary conditions).
 Here $\partial/\partial\nu$ denotes the normal derivative.
 Note that because of the boundary condition \eqref{EINB}, total energy $\epsilon$ is conserved during time evolution provided that $\psi_{\nabla w}$ is parallel to $\nabla w$.

It seems that there had been no solvability result even if $\gamma$, $\kappa$, $d$, $\varepsilon$ are positive constants.
 In this paper we observe that \eqref{EICHT} is a quasilinear parabolic problem when $\eta_{\nabla w}=0$ and that an abstract framework \cite[Theorem~5.1.1]{PS} is applicable at least when $\gamma$, $\kappa$, $d$, $\varepsilon$ are positive constants.
 We thus obtain a unique local-in-time solution provided that $d/\gamma$ is large but this condition depends on initial data.
 More precisely, we require that $\eta_w(w_0,\theta_0)^2\theta_0\gamma/d$ is small for initial data $(w_0,\theta_0)$.
 We are forced to assume this condition to derive necessary maximal regularity for a linearized operator. 
 This is a perturbation type result reflecting structure of block matrix operators discussed in \cite{AH}; see Remark~\ref{RSPer}.
 If maximal regularity theory of mixed order developed by \cite{DK} would apply in our setting, such restrictions could be removed but we do not pursue this problem in this paper.
 Note that maximal regularity for elliptic operators discussed in \cite{PS} is not applicable since our linearized operator is of mixed order type.
 We note that $\eta_{\nabla w}=0$ if $\varepsilon$ is a constant.
 Unfortunately, our theory so far does not apply to the case where $\varepsilon$ depends on $\theta$ because the term $\eta_{\nabla w}\cdot\nabla\operatorname{div}j$ contains very high order term.

In \cite{DLSS}, $\varepsilon=\varepsilon_1\theta$ with positive constant $\varepsilon_1$ and
\[
    \psi_0(w,\theta)
    =\frac{(w^2-1)^2}{4}
    +c(\theta)w^2, \quad
    c(\theta)=\frac{(\theta-\bar{\theta})^3}{3}
\]
with some constant $\bar{\theta}>0$.
 This setting is excluded from our analysis since $\eta_{\nabla w}=-\varepsilon_1\nabla w$.
 In \cite{DLSS}, they introduce the regularizing term $a\nabla\partial_tw$ and construct a local-in-time solution when initial data $(w_0,\theta_0)$ is close to $(0,\bar{\theta})$ in some sense.
 The local-in-time solvability for \eqref{EICHT} is still an open problem when $\varepsilon$ depends on $\theta$ even when $\gamma$, $\kappa$, $d$ are positive constants.

\vspace{0.5em}\noindent
\textbf{Stability of an equilibrium.}
 We next discuss some stability result of an equilibrium under the evolution equation \eqref{EICHT}.
 We say that $(w,\theta)$ is an \emph{equilibrium} if $(w,\theta)$ is a time-independent solution of \eqref{EICHT} and \eqref{EINB}.
 Under the Neumann type boundary condition \eqref{EICHT} and \eqref{EINB}, it is easy to see (cf.\ Section~\ref{SSEE}) that the temperature $\theta$ and the chemical potential $\mu$ must be a constant for an equilibrium.
 In other words, an equilibrium $(w^e,\theta^e)$ must satisfy
\[
    \psi_{0w}(w^e,\theta^e)+\frac{\varepsilon_w}{2}|\nabla w|^2
    -\operatorname{div}(\varepsilon\nabla w^e)
    =\mathrm{const}, \quad
    \theta^e=\mathrm{const}
\]
with \eqref{EINB}.
 The left-hand side of the first equation equals $(\delta_w\psi)(w^e,\theta^e)$.
 From variational point of view, for a given constant $\theta^e$, $w^e$ is characterized as a stationary point of total free energy
\[
    \int_\Omega \psi(w,\theta^e)\, dx
\]
under the average control
\[
    \frac{1}{|\Omega|}\int_\Omega w\,dx=w_{\mathrm{av}},
\]
where $w_{\mathrm{av}}$ is given and $|\Omega|$ denotes the volume of $\Omega$.
 We say that $w^e$ is \emph{energetically stable} at temperature $\theta^e$ if
\[
    \int_\Omega \left\{\varepsilon|\nabla y|^2
    +\psi_{0ww}(w^e,\theta^e)|y|^2\right\} \,dx\ge0
\]
for all $y\in H^1(\Omega)$ satisfying $\int_\Omega y\,dx=0$.
 This stability condition can be interpreted as the non-negativity of the second variation of $\int\psi(w,\theta^e)\,dx$ under the average control.

We next discuss the condition of non-degeneracy.
 Let $\mathcal{E}_{\theta^e}^{w_\mathrm{av}}$ be a set of $w^e$ where $(w^e,\theta^e)$ is an equilibrium satisfying $\int_\Omega w^e\,dx=|\Omega|w_\mathrm{av}$.
 For $w^e\in\mathcal{E}_{\theta^e}^{w_\mathrm{av}}$ let $B^e$ be the linearized operator defined by
\[
    B^e =P_1\left(-\varepsilon\Delta+\psi_{0ww}(w^e,\theta^e)\right)
    \quad\text{with}\quad
    P_1g=g-\frac{1}{|\Omega|}\int_\Omega g\,dx
\]
in $L_\mathrm{av}^2(\Omega)$, where $L_\mathrm{av}^2(\Omega)$ is the average-free $L^2$ space.
 We say that $w^e$ is \emph{non-degenerate} if $\mathcal{E}_{\theta^e}^{w_{\mathrm{av}}}$ is an $m$-dimensional manifold near $w^e$ and $m=\dim\ker B^e$, where $\dim$ denotes the dimension and $\ker$ denotes the kernel.
 This wording is the same as in \cite{GKTs}.
 Indeed, in \cite{GKTs} for the isothermal Navier-Stokes-Korteweg flow it was shown that any solution starting from near equilibrium extends globally-in-time and it converges to some equilibrium near original equilibrium if the original equilibrium is energetically stable and non-degenerate for $1\le n\le3$ by applying the generalized stability principle \cite[Theorem~5.3.1]{PS}.  

In our setting, the spectral analysis of the linearized operator around $(w^e,\theta^e)$ of \eqref{EICHT} is more involved and we are able to prove above type of stability of an equilibrium only when $\theta^e=\kappa^2\gamma/d$ and $\nabla\eta_w(w^e,\theta^e)\equiv0$ by applying \cite[Theorem~5.3.1]{PS}.
 Although our setting is rather restrictive, it seems that this is the first global and stability result.

\vspace{0.5em}\noindent
\textbf{A brief literature overview.}
 There are several models of temperature-dependent Cahn-Hilliard equations.
 Since \cite{DLSS} contains a nice review, our explanation is very short except articles not mentioned there.
 A popular model by G.~Caginalp \cite{Ca} may not satisfy the conservation of total energy so it is outside our framework.
 A special case of the models derived by \cite{AP} corresponds to Type I\hspace{-0.1em}I, i.e., $j_m=0$.
 H.~Freist\"uhler and M.~Kotschote \cite{FK1} derive the Navier-Stokes-Cahn-Hilliard equation assuming $j_m=0$.
 If fluid effect is neglected, their model corresponds to Type I\hspace{-0.1em}I.
 They further derive another model in \cite{FK2} assuming that $j_\alpha=0$.
 They obtain Type I model which agrees with ours.
 In these derivations, Fourier's law is assumed and the form of flux $j$ is assumed to be a gradient of some known function called a chemical potential.
 The model derived by A.~Miranville and G.~Schimperna \cite{MS} takes $j=-\nabla(\mu/\theta)$, $\mu=\delta_w\psi$ as a special case which is still Type I but different choice of $q$, i.e., $q+\mu j=-d\nabla\theta$.
 We shall discuss the derivation of this model from our point of view in Remark~\ref{RMir} in Section~\ref{SSEq}.
 In \cite{FK1}, \cite{FK2}, the fluid effect is considered.
 In the isothermal case, the Cahn-Hilliard equation with fluid effect called the Navier-Stokes-Cahn-Hilliard equation is first derived by J.~Lowengrub and L.~Truskinovsky \cite{LT} for compressible fluid.
 For the incompressible case, the Navier-Stokes-Cahn-Hilliard equation is derived by H.~Abels, H.~Garcke and G.~Gr\"un \cite{AGG}.

Let us mention well-posedness of the equations.
 For isothermal case, this type of problems is well studied for various choices of $\gamma(w)$ and $\psi(w,\nabla w)$.
 See \cite{Mi} and recent paper \cite{CGR} and papers cited there.
 However, for non-isothermal model it seems that very little is known.
 In \cite{MaS}, A.~Marveggio and G.~Schimperna proved the existence of a global-in-time weak and entropy solutions for a subclass of equations derived in \cite{MS}.
 (Note that in their definition, notion of a weak solution is closer to that of a classical solution than that of an entropy solution.)
 However, it seems that its uniqueness is not known.
 Our result for \eqref{EICHT} seems to be the first result for unique solvability of the time-dependent Cahn-Hilliard type equation.
 Also it seems that the stability of equilibrium was not discussed in the literature for non-isothermal case.

This paper is organized as follows.
 In Section~\ref{SD}, we derive several Cahn-Hilliard equations from conservation law and the second law of thermodynamics.
 In Section~\ref{SE}, we rewrite equations in the form of evolution equations of mass fraction $w$ and temperature $\theta$.
 In Section~\ref{SL}, to construct solutions for initial-boundary value problems we introduce function spaces and write equations in an abstract way.
 In Section~\ref{SW}, we construct a local-in-time solution with no size restriction of initial data.
 In Section~\ref{SS}, we discuss the stability of equilibria.

\section{Derivation} \label{SD}

We recall the derivation of temperature-dependent Cahn-Hilliard equation.
 There seem to be several ways to derive and the resulting system is different depending on what kind of a priori assumptions on the system are imposed.

We begin with several conservation laws (cf.\ \cite[Chapter~1]{PS}).
 Let $\rho=\rho(x,t)$ be the density of a mixed fluid at $x\in\Omega$ and at time $t>0$, where $\Omega$ is a domain in $\mathbb{R}^n$.
 Let $u=(u^1,\ldots,u^n)$, $u^i=u^i(x,t)$ be its velocity.
 The fluid consists of two components, material one and two.
 Let $w=w(x,t)$ denote the mass fraction of the material one so that $\rho w=\rho_1$ is its apparent density while $\rho(1-w)=\rho_2$ is the apparent density of the material two.
 Evidently $\rho_1+\rho_2=\rho$.
 Physically, we require $\rho_1\ge0$, $\rho_2\ge0$ so that $0\le w\le1$.
 The equations of conservation of mass are
\begin{align}
    &\partial_t\rho+\operatorname{div}(\rho u)=0, \label{EM} \\
    &\partial_t\rho_1+\operatorname{div}(\rho_1 u)=-\operatorname{div}j, \label{EM1} \\
    &\partial_t\rho_2+\operatorname{div}(\rho_2 u)=\operatorname{div}j, \label{EM2}
\end{align}
where $j$ denotes the \emph{mass flux}.
 The material one and two may diffuse but no chemical reaction is considered.
 Of course, \eqref{EM} and \eqref{EM1} imply \eqref{EM2} so we only consider \eqref{EM} and \eqref{EM1}.
 If we use the mass fraction $w$ \eqref{EM1} becomes
\begin{equation} \label{EMw}
    \partial_t(\rho w)+\operatorname{div}(\rho wu)=-\operatorname{div}j.
\end{equation}
Let $T$ be the stress tensor.
 The equation of conservation of momentum is of the form if there is no external body force:
\begin{equation} \label{EMo}
    \partial_t(\rho u)+\operatorname{div}(\rho u\otimes u)-\operatorname{div}T=0.  
\end{equation}
Let $\psi$ be the mass specific Helmholtz energy.
 Let $\eta$ be the mass specific entropy.
 Let $\epsilon$ be the mass specific internal energy.
 It is defined as
\begin{equation} \label{EE}
    \epsilon=\psi+\eta\theta,
\end{equation}
where $\theta=\theta(x,t)$ denotes the temperature.
 Physically, we require $\theta>0$.
 Since $\rho\left(\epsilon+|u|^2/2\right)$ is the density of total energy, the conservation of energy is of the form
\begin{equation} \label{EEC}
    \partial_t\left(\rho\left(\epsilon+\frac{|u|^2}{2}\right)\right)+\operatorname{div}\left(\rho u\left(\epsilon+\frac{|u|^2}{2}\right)\right)-\operatorname{div}Tu-\operatorname{div}j_e=0,
\end{equation}
where $j_e$ denotes the \emph{energy flux}.
 Here, we assume that there is no heat source.
 One has to consider the second law of thermodynamics.
 This can be written as follows:
\begin{equation} \label{EEP}
    (\rho\eta)_t+\operatorname{div}(\rho u\eta)+\operatorname{div}j_s=:\Delta
\end{equation}
and we require that the \emph{entropy production} $\xi=\theta\Delta$ is non-negative with some entropy flux $j_s$.

To derive equations for $\rho$, $u$, $\theta$ and $w$ one has to close the system by introducing several constitutive relations.
 In this paper, we do not consider fluid effect so that we assume $u\equiv0$ and disregard the momentum conservation \eqref{EMo}.
 Moreover, we assume that the density is constant independent of $x$ and $t$.
 We assume that $\rho\equiv1$.
 Then the equation \eqref{EMw} becomes
\begin{equation} \label{EW}
    \partial_t w=-\operatorname{div}j
\end{equation}
and \eqref{EEC} becomes
\begin{equation} \label{EEC1}
    \partial_t \epsilon=\operatorname{div}j_e.
\end{equation}
The second law of thermodynamics is now of the form
\begin{equation} \label{EEP1}
    0\le\Delta=\partial_t\eta+\operatorname{div}j_s.
\end{equation}

Let $\psi_0=\psi_0(w,\theta)$ be a bulk free energy.
 Since $\rho$ is assumed to be $1$, it is essentially the mass specific Helmholtz energy $\psi$.
 However, if $\psi_0$ is non-convex in $w$, then the resulting equation for $\theta$ has a part of backward parabolicity so its initial value problem may not be solvable for a general smooth initial data.
 To make the problem well-posed, we consider the Helmholtz energy $\psi$ of the form
\begin{equation} \label{EH1}
    \psi=\psi_0(w,\theta)+\frac{\varepsilon(w,\theta)}{2}|\nabla w|^2
\end{equation}
with $\varepsilon>0$ for $(w,\theta)$ at least where $\psi_0$ is non-convex in $w$.
 More generally we consider
\begin{equation} \label{EH}
    \psi=\psi(w,\theta,\nabla w).
\end{equation}

The goal of this section is to derive a closed system form \eqref{EW}, \eqref{EEC1}, \eqref{EEP1} when $\psi=\psi(w,\theta,\nabla w)$ by choosing several ``constitutive relations" for $j$, $j_e$ and $j_s$.
 We recall that the entropy satisfies $\eta=-\partial_\theta\psi$.
 Let $q$ be the heat flux.
 We assume Fourier's law:
\begin{equation} \label{EF}
    q=-d(w,\theta)\nabla\theta.
\end{equation}
We consider typical choices of $j_e$ and $j_s$,
\begin{description}
    \item[\rm Type I] $j_s=q/\theta$ while $j_e=-q+j_m$ with some flux $j_m$;
    \item[\rm Type I\hspace{-0.1em}I] $j_e=-q$ while $j_s=q/\theta+j_\alpha$ with some flux $j_\alpha$.
\end{description}
There is another choice
\begin{description}
    \item[\rm Type I\hspace{-0.1em}I\hspace{-0.1em}I] $j=\bar{u}w$, $j_s=\bar{u}\eta+q/\theta$ with some vector field $\bar{u}$ while $j_e=-q+j_m$ with some flux $j_m$. 
\end{description}
In Type I\hspace{-0.1em}I\hspace{-0.1em}I, the entropy diffuses along diffusion of the material one \cite{DLSS}.

\subsection{Entropy production} \label{SSEP}

We begin with calculating entropy production $\xi=\theta\Delta$, where $\Delta$ is defined by \eqref{EEP1}.
 We consider \eqref{EW}, \eqref{EEC1} with \eqref{EH}, where $\epsilon=\psi+\eta\theta$ and $\eta=-\partial_\theta\psi$.
 We denote $\psi_w=\partial\psi/\partial w$, $\psi_{\nabla w}=\left.\nabla_z\psi(w,\theta,z)\right|_{z=\nabla w}$ and
\[
    \delta_w\psi:=\psi_w-\operatorname{div}\psi_{\nabla w},
\]
which is an Euler-Lagrange operator of $\int_\Omega\psi(w,\theta,\nabla w)\,dx$ (with respect to $L^2$-inner product).
\begin{thm} \label{TT}
\begin{enumerate}
    \item[(i)] (Type I)
    Let $\mu$ denote the chemical potential of the form $\mu:=\delta_w\psi$ ($=\psi_w-\operatorname{div}\psi_{\nabla w}$).
     Assume that $j_s=q/\theta$ and 
    \[
        j_e=-q+j_m
    \]
    with
    \[
        j_m=-\left\{\mu j+\psi_{\nabla w}\operatorname{div}j \right\}.
    \]
    Then
    \[
        \xi =-\nabla\mu\cdot j-q\cdot\nabla\theta/\theta.
    \]
    \item[(i\hspace{-0.1em}i)] (Type I\hspace{-0.1em}I)
    Let $\check{\mu}$ denote the chemical potential of the form $\check{\mu}:=\theta\delta_w(\psi/\theta)$.
    Assume that $j_e=-q$ and 
    \[
        j_s=q/\theta+j_\alpha
    \]
    with
    \[
        j_\alpha=-\left\{\check{\mu}j/\theta+(\psi_{\nabla w}/\theta)\operatorname{div}j \right\}.
    \]
    Then
    \[
        \xi=-\theta\nabla(\check{\mu}/\theta)\cdot j-q\cdot\nabla\theta/\theta.
    \]
    \item[(i\hspace{-0.1em}i\hspace{-0.1em}i)] (Type I\hspace{-0.1em}I\hspace{-0.1em}I) Assume that $j=\bar{u}w$ and $j_s=\bar{u}\eta+q/\theta$ for some vector field $\bar{u}$ and that $j_e=-q+j_m$ with
     \[
        j_m=-\left\{\mu j+\psi_{\nabla w}\operatorname{div}j+\frac{\theta\eta}{w}j \right\}.
     \]
     Then
     \[
        \xi=-\left(\nabla \mu+\frac{\eta\nabla\theta}{w}\right) \cdot j-q\cdot\frac{\nabla\theta}{\theta}.
     \]
\end{enumerate}
\end{thm}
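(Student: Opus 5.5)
The core of all three parts is one chain-rule identity for the internal energy. Since $\epsilon=\psi+\theta\eta$ and $\eta=-\psi_\theta$, along any smooth $(w,\theta)$ we have
\[
    \partial_t\epsilon=\psi_w\partial_t w+\psi_{\nabla w}\cdot\nabla\partial_t w+\theta\partial_t\eta ,
\]
because the $\psi_\theta\partial_t\theta$ term cancels against $\eta\partial_t\theta$. I will use the product rule
$\psi_{\nabla w}\cdot\nabla\partial_tw=\operatorname{div}(\psi_{\nabla w}\partial_tw)-(\operatorname{div}\psi_{\nabla w})\partial_tw$, which gives
\[
    \theta\partial_t\eta=\partial_t\epsilon-\mu\,\partial_tw-\operatorname{div}(\psi_{\nabla w}\partial_tw),\qquad \mu=\delta_w\psi .
\]
I then substitute $\partial_tw=-\operatorname{div}j$ from \eqref{EW} and $\partial_t\epsilon=\operatorname{div}j_e$ from \eqref{EEC1}. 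This yields
\[
    \theta\partial_t\eta=\operatorname{div}j_e+\mu\operatorname{div}j+\operatorname{div}(\psi_{\nabla w}\operatorname{div}j).
\]
Finally $\xi=\theta\Delta=\theta\partial_t\eta+\theta\operatorname{div}j_s$, and in each case I insert the given fluxes.

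\emph{Type I.} With $j_e=-q-\mu j-\psi_{\nabla w}\operatorname{div}j$, the divergence of $\psi_{\nabla w}\operatorname{div}j$ cancels exactly. Using $\mu\operatorname{div}j-\operatorname{div}(\mu j)=-\nabla\mu\cdot j$, we get $\theta\partial_t\eta=-\operatorname{div}q-\nabla\mu\cdot j$. Adding $\theta\operatorname{div}(q/\theta)=\operatorname{div}q-q\cdot\nabla\theta/\theta$ gives the claim.

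\emph{Type III.} The extra term $-\operatorname{div}(\theta\eta j/w)$ in $j_e$ equals $-\operatorname{div}(\theta\eta\bar u)$, since $j=\bar u w$. On the other side, $\theta\operatorname{div}(\bar u\eta)=\operatorname{div}(\theta\eta\bar u)-\eta\bar u\cdot\nabla\theta$. These combine to $-\eta\nabla\theta\cdot j/w$, which gives the stated formula.

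\emph{Type II.} This is the step needing care, because the natural variable is $\psi/\theta$. I will work with $\theta\partial_t\eta$ as above, but with $j_e=-q$. The goal is to rewrite $\mu\operatorname{div}j+\operatorname{div}(\psi_{\nabla w}\operatorname{div}j)$ in terms of $\check\mu=\theta\delta_w(\psi/\theta)$. Since $\theta$ does not depend on $\nabla w$, $\delta_w(\psi/\theta)=\psi_w/\theta-\operatorname{div}(\psi_{\nabla w}/\theta)$. Hence
\[
    \check\mu=\mu+\psi_{\nabla w}\cdot\nabla\theta/\theta ,
\]
and I rewrite the bracket as
\[
    \check\mu\operatorname{div}j+\theta\,\operatorname{div}\bigl((\psi_{\nabla w}/\theta)\operatorname{div}j\bigr).
\]
Next I compute $\theta\operatorname{div}j_\alpha=-\theta\operatorname{div}(\check\mu j/\theta)-\theta\operatorname{div}((\psi_{\nabla w}/\theta)\operatorname{div}j)$. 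The second piece cancels the corresponding term above. What remains is $\check\mu\operatorname{div}j-\theta\operatorname{div}(\check\mu j/\theta)=-\theta\nabla(\check\mu/\theta)\cdot j$. Together with $-\operatorname{div}q+\theta\operatorname{div}(q/\theta)=-q\cdot\nabla\theta/\theta$, this gives (ii).

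The only real obstacle is this bookkeeping of the $\nabla\theta$ terms in Type II, i.e.\ verifying the identity $\check\mu=\mu+\psi_{\nabla w}\cdot\nabla\theta/\theta$. The rest consists of the cancellations described above.
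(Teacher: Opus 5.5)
Your proof is correct and follows essentially the same route as the paper. Your identity $\theta\partial_t\eta=\operatorname{div}j_e+\mu\operatorname{div}j+\operatorname{div}(\psi_{\nabla w}\operatorname{div}j)$ is precisely the paper's $\xi=\operatorname{div}j_e+\theta\operatorname{div}j_s-V_{\mathrm{ISO}}$ with the isothermal variation $V_{\mathrm{ISO}}=\psi_w w_t+\psi_{\nabla w}\cdot\nabla w_t$ integrated by parts, and Types I and III go through the same cancellations. The only difference is cosmetic and occurs in Type II: instead of deriving $\check\mu=\mu+\psi_{\nabla w}\cdot\nabla\theta/\theta$, the paper writes $V_{\mathrm{ISO}}=\theta\{\varphi_w w_t+\varphi_{\nabla w}\cdot\nabla w_t\}$ with $\varphi=\psi/\theta$ and integrates by parts in $\varphi$ directly, which gives the same cancellation against $j_\alpha$ without computing $\check\mu-\mu$.
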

\begin{proof}
To calculate variation $\partial_t\epsilon$, it is convenient to introduce ``isothermal variation" $V_\mathrm{ISO}$ of $\psi$ of the form 
    \begin{equation} \label{EBISO}
        V_\mathrm{ISO}:= \psi_w w_t+\psi_{\nabla w}\cdot\nabla w_t
        \quad\text{with}\quad w_t=\partial_t w.
    \end{equation}
    We proceed
    \begin{equation}
    \begin{aligned} \label{EISO}
        \partial_t\epsilon
        &=\psi_\theta \theta_t+V_\mathrm{ISO}+\eta\theta_t+(\partial_t\eta)\theta \\
        &=V_\mathrm{ISO}+\theta\partial_t\eta,
    \end{aligned}
    \end{equation}
    since $\eta=-\partial_\theta\psi$.
     Since $\delta_w\psi=\psi_w-\operatorname{div}\psi_{\nabla w}$, we have
    \begin{align*}
        V_\mathrm{ISO}
        &=\psi_w w_t+\operatorname{div}(\psi_{\nabla w}w_t)-(\operatorname{div}\psi_{\nabla w})w_t \\
        &=(\delta_w\psi)w_t+\operatorname{div}(\psi_{\nabla w}w_t).
    \end{align*}
    Since $w_t=-\operatorname{div}j$ by \eqref{EW}, we observe that
    \begin{equation} \label{EVI1}
        V_\mathrm{ISO}
        =-(\delta_w\psi)\operatorname{div}j-\operatorname{div}(\psi_{\nabla w}\operatorname{div}j).
    \end{equation}
    Since $\partial_t\epsilon=\operatorname{div}j_e$ by \eqref{EEC1} so that $\theta\partial_t\eta=\operatorname{div}j_e-V_\mathrm{ISO}$ by \eqref{EISO} and since \eqref{EEP1} becomes $\theta\partial_t\eta=\xi-\theta\operatorname{div}j_s$, we see that
    \begin{equation} \label{ERE}
        \xi=\operatorname{div}j_e+\theta\operatorname{div}j_s-V_\mathrm{ISO}.
    \end{equation}
\begin{enumerate}
    \item[(i)] (Type I) 
    Since $(\delta_w\psi)\operatorname{div}j=\operatorname{div}\left((\delta_w\psi)j\right)-\nabla(\delta_w\psi)\cdot j$, from \eqref{EVI1} it follows that 
    \begin{equation} \label{EVI}
        V_\mathrm{ISO}=-\operatorname{div}\left((\delta_w\psi)j\right)+\nabla(\delta_w\psi)\cdot j
        -\operatorname{div}(\psi_{\nabla w}\operatorname{div}j).
    \end{equation}
    By our assumptions $j_s=q/\theta$ and $j_e=-q+j_m$, \eqref{ERE} together with \eqref{EVI} yields
    \begin{align*}
        \xi
        =&-\operatorname{div}q+\theta\operatorname{div}(q/\theta)
        +\operatorname{div}\left(j_m
        +(\delta_w\psi)j+\psi_{\nabla w}\operatorname{div}j\right) \\
        &-\nabla(\delta_w\psi)\cdot j.
    \end{align*}
    By our choice of $j_m$, we observe that
    \[
        \xi=-\operatorname{div}q+\theta\operatorname{div}(q/\theta)-\nabla(\delta_w\psi)\cdot j.
    \]
    Since
    \[
        -\operatorname{div}q+\theta\operatorname{div}(q/\theta)
        =-q\cdot\nabla\theta/\theta,
    \]
    the conclusion follows.
    \item[(i\hspace{-0.1em}i)] (Type I\hspace{-0.1em}I)
    We write $V_\mathrm{ISO}$ in a different way from \eqref{EVI1}.
     We set $\varphi=\psi/\theta$.
      By \eqref{EBISO}, we see that
      \begin{equation} \label{EBISO1}
     V_\mathrm{ISO}=\theta\{\varphi_w w_t+\varphi_{\nabla w}\cdot\nabla w_t\}
      \end{equation}
      since $\psi_w=\theta\varphi_w$ and $\psi_{\nabla w}=\theta\varphi_{\nabla w}$.
      In a parallel way to derive \eqref{EVI1} from \eqref{EBISO}, we obtain
      \[
     V_\mathrm{ISO}=-\theta\left\{(\delta_w\varphi)\operatorname{div}j+\operatorname{div}(\varphi_{\nabla w}\operatorname{div}j)\right\}.
     \]
     In a parallel way to derive \eqref{EVI} from \eqref{EVI1} we obtain
     \[
     -V_\mathrm{ISO}=\theta\left\{\operatorname{div}\left((\delta_w\varphi)j\right)-\nabla(\delta_w\varphi)\cdot j+\operatorname{div}(\varphi_{\nabla w}\operatorname{div}j)\right\},
     \]
     which is an expression different from \eqref{EVI}.
      Plugging $j_e=-q$ and $j_s=q/\theta+j_\alpha$ to \eqref{ERE}, we obtain
    \begin{align*}
        \xi
        &=-\operatorname{div}q+\theta\operatorname{div}(q/\theta)+\theta\operatorname{div}j_\alpha-V_\mathrm{ISO} \\
        &=-q\cdot\nabla\theta/\theta+\theta\left\{\operatorname{div}j_\alpha+\operatorname{div}\left((\delta_w\varphi)j+\varphi_{\nabla w}\operatorname{div}j\right)\right\}-\theta\nabla\delta_w\varphi\cdot j \\
        &=-q\cdot\nabla\theta/\theta-\theta\nabla\left(\delta_w(\psi/\theta)\right)\cdot j
    \end{align*}
    if we choose $j_\alpha$ as in Theorem~\ref{TT}.
     The conclusion now follows.
     \item[(i\hspace{-0.1em}i\hspace{-0.1em}i)] (Type I\hspace{-0.1em}I\hspace{-0.1em}I)
      We first note that
      \[
        j_s=\frac{q}{\theta}+\frac{\eta}{w}j.
      \]
       The identity \eqref{ERE} together with \eqref{EVI} yields
       \begin{align*}
        \xi=&\operatorname{div}(-q+j_m)+\theta\left(\operatorname{div}(\eta j/w+q/\theta)\right) \\
        &+(\delta_w\psi)\operatorname{div}j+\operatorname{div}(\psi_{\nabla w}\operatorname{div}j) \\
        =&\left\{\operatorname{div}(-q)+\theta\operatorname{div}(q/\theta)\right\}
        +\operatorname{div}(j_m+\psi_{\nabla w}\operatorname{div}j) \\
        &+(\delta_w\psi)\operatorname{div}j+\theta\operatorname{div}(\eta j/w).
       \end{align*}
       Plugging $(\delta_w\psi)\operatorname{div}j=\operatorname{div}\left((\delta_w\psi)j\right)-\nabla(\delta_w\psi)\cdot j$ and $\theta\operatorname{div}(\eta j/w)=\operatorname{div}(\theta\eta j/w)-\nabla\theta\cdot\eta j/w$ to the previous identity, we have
       \begin{align*}
        \xi=&-q\cdot(\nabla\theta)/\theta+\operatorname{div}(j_m+\psi_{\nabla w}\operatorname{div}j) \\
        &+\left\{\operatorname{div}\left((\delta_w\psi)j\right)-\nabla(\delta_w\psi)\cdot j \right\} \\
        &+\left\{\operatorname{div}(\theta\eta j/w)-(\nabla\theta)\cdot\eta j/w\right\} \\
        =&-q\cdot(\nabla\theta)/\theta+\operatorname{div}\left(j_m+\psi_{\nabla w}\operatorname{div}j+(\delta_w\psi)j+\theta\eta j/w \right) \\
        &-\left\{\nabla(\delta_w\psi)+(\nabla\theta)\eta/w\right\}\cdot j.
       \end{align*}
       By our choice of $j_m$, we observe that
       \[
        \xi=-q\cdot(\nabla\theta)/\theta -\left(\nabla(\delta_w\psi)+\frac{\eta\nabla\theta}{w}\right) \cdot j.
       \]
       As in Type I, this yields what we would like to prove. 
\end{enumerate}
\end{proof}

If we admit Fourier's law \eqref{EF}, Theorem~\ref{TT} says that the entropy production is of the form
\begin{align}
     &\xi=\theta\Delta=d|\nabla\theta|^2/\theta-\nabla\mu\cdot j, \quad\text{(Type I),} \label{EPE1} \\
     &\xi=\theta\Delta=d|\nabla\theta|^2/\theta-\theta\nabla(\check{\mu}/\theta)\cdot j, \quad\text{(Type I\hspace{-0.1em}I),} \label{EPE2} \\
     &\xi=\theta\Delta=d|\nabla\theta|^2/\theta-\left(\nabla\mu+\eta\nabla\theta/w\right)\cdot j, \quad\text{(Type I\hspace{-0.1em}I\hspace{-0.1em}I).} \label{EPE3}
\end{align}
\begin{prop} \label{PH}
    Assume that $\theta>0$.
     Then $d\ge0$ provided that $\Delta\ge0$ for $j=0$.
\end{prop}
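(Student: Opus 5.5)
We use the entropy production formulas \eqref{EPE1}--\eqref{EPE3}, which follow from Theorem~\ref{TT} together with Fourier's law \eqref{EF}. If we set $j=0$, every term involving $j$ drops out, and in all three types (for Type I\hspace{-0.1em}I\hspace{-0.1em}I this also forces $j_m$ to vanish, and the extra term $\eta\nabla\theta/w\cdot j$ disappears) we obtain
\[
    \theta\Delta=\xi=\frac{d(w,\theta)\,|\nabla\theta|^2}{\theta}.
\]
The plan is to read off the sign of $d$ from this identity.

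Fix a point $(w_*,\theta_*)$ with $\theta_*>0$ at which we want to show $d(w_*,\theta_*)\ge0$. Since the second law is required for all admissible states, we choose a configuration with $w(x_0)=w_*$, $\theta(x_0)=\theta_*$ and $\nabla\theta(x_0)\neq0$. Affine functions of $x$ near $x_0$ will do, say $\theta(x)=\theta_*+a\cdot(x-x_0)$ with $a\ne0$ small enough that $\theta$ stays positive near $x_0$. At $x_0$ the condition $\Delta\ge0$, together with $\theta_*>0$, gives $d(w_*,\theta_*)|a|^2\ge0$. Dividing by $|a|^2>0$ yields $d(w_*,\theta_*)\ge0$.

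The only point needing care is the logical one: the hypothesis ``$\Delta\ge0$ for $j=0$'' must be understood as holding for arbitrary fields $(w,\theta)$ with $\theta>0$, in particular pointwise with arbitrary values of $\nabla\theta$. Once that is granted, the argument is immediate. The formula for $\Delta$ is algebraic in $\nabla\theta$ at each point, so no PDE solvability issue arises.
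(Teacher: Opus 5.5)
Your proposal is correct and matches the paper's argument. The paper simply reads off $\theta\Delta=d|\nabla\theta|^2/\theta$ from \eqref{EPE1}--\eqref{EPE3} with $j=0$. Your extra step of choosing a state with $\nabla\theta\neq0$ at a prescribed point $(w_*,\theta_*)$ makes explicit the quantifier over admissible fields that the paper leaves implicit.
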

This is clear by \eqref{EPE1}, \eqref{EPE2}, \eqref{EPE3}.
 Physically, it says that the conductivity $d$ must be non-negative provided that the second law of thermodynamics holds.

In the sequel we always assume that $\theta>0$.
 To determine $j$ we invoke what is called maximizing entropy principle as discussed in \cite{HM}; see also \cite{MP}.
 Here is a variant
\begin{center}
 ``Maximize $\xi$ keeping $|j|=$\;constant (by fixing $\theta$)."    
\end{center}
By the equality case of the Schwarz inequality, $j$ must be
\begin{align*}
    &j=-\gamma\nabla\mu \  \text{(Type I)}, \quad
    j=-\tilde{\gamma}\nabla(\check{\mu}/\theta) \ \text{(Type I\hspace{-0.1em}I)}, \\
    &j=-\tilde{\tilde{\gamma}}\left(\nabla\mu+\eta(\nabla\theta)/w\right) \ \text{(Type I\hspace{-0.1em}I\hspace{-0.1em}I)},
\end{align*}
where $\gamma$, $\tilde{\gamma}$ or $\tilde{\tilde{\gamma}}$ are non-negative functions of $w$ and $\theta$.
 Then by Theorem~\ref{TT} we see that
\begin{align*}
     &\xi=d|\nabla\theta|^2/\theta+\gamma|\nabla\mu|^2 \quad\text{(Type I),} \\
     &\xi=d|\nabla\theta|^2/\theta+\tilde{\gamma}\theta\left|\nabla(\check{\mu}/\theta)\right|^2 \quad\text{(Type I\hspace{-0.1em}I),} \\
    &\xi=d|\nabla\theta|^2/\theta+\tilde{\tilde{\gamma}}\left|\nabla\mu+\eta(\nabla\theta)/w\right|^2 \quad\text{(Type I\hspace{-0.1em}I\hspace{-0.1em}I).}
\end{align*}
The coefficients $\gamma$, $\tilde{\gamma}$ or $\tilde{\tilde{\gamma}}$ are often called mobilities.
\begin{rem} \label{RDer}
In our Theorem~\ref{TT} we give $j_m$ and $j_\alpha$ as prescribed quantities.
 However, these relations for $j_m$ and $j_\alpha$ are actually derived up to divergence free vector provided that $\xi\ge0$ for all choice of functions $j$ assuming $q=0$.
\end{rem}

\subsection{Equations} \label{SSEq}

Let us summarize the equation we obtain.
 We postulate Fourier's law. \\
Type I.
\[
    \left\{
    \begin{array}{l}
	\partial_t w=-\operatorname{div}j,\quad j=-\gamma\nabla\mu \\
	\partial_t\epsilon=\operatorname{div}(d\nabla\theta)-\operatorname{div}(\mu j+\psi_{\nabla w}\operatorname{div}j).
    \end{array}
    \right.
\]
Here $\mu=\delta_w\psi$ and $\gamma\ge0$, $d\ge0$.
 The quantity $\Delta=\xi/\theta$ for entropy production $\xi$ equals
\[
    \Delta=d|\nabla\theta|^2/\theta^2
    +\gamma|\nabla\mu|^2/\theta.
\]
The entropy flux is $j_s=-d(\nabla\theta)/\theta$.\\
Type I\hspace{-0.1em}I.
\[
    \left\{
    \begin{array}{l}
	\partial_t w=-\operatorname{div}j,\quad j=-\tilde{\gamma}\nabla(\check{\mu}/\theta), \\
	\partial_t\epsilon=\operatorname{div}(d\nabla\theta).
    \end{array}
    \right.
\]
Here $\check{\mu}=\theta\delta_w(\psi/\theta)$ and $\tilde{\gamma}\ge0$, $d\ge0$.
 The quantity $\Delta$ is
\[
    \Delta=d|\nabla\theta|^2/\theta^2
    +\tilde{\gamma}\left|\nabla(\check{\mu}/\theta)\right|^2.
\]
The entropy flux is
\[
    j_s=-d(\nabla\theta)/\theta
    -\left\{\check{\mu} j/\theta+(\psi_{\nabla w}/\theta)\operatorname{div}j \right\}.
\]
Type I\hspace{-0.1em}I\hspace{-0.1em}I.
\[
    \left\{
    \begin{array}{l}
	\partial_t w=-\operatorname{div}j,\quad j=-\tilde{\tilde{\gamma}}\left(\nabla\mu+\eta(\nabla\theta)/w\right), \\
	\partial_t\epsilon=\operatorname{div}(d\nabla\theta)-\operatorname{div}\left(\mu j+\psi_{\nabla w}\operatorname{div}j+\theta\eta j/w\right).
    \end{array}
    \right.
\]
The quantity $\Delta$ equals
\[
    \Delta= d|\nabla\theta|^2/\theta^2+\tilde{\tilde{\gamma}}\left|\nabla\mu+\eta\nabla\theta/w\right|^2/\theta.
\]
The entropy flux is
\[
    j_s=-d(\nabla\theta)/\theta
    -\tilde{\tilde{\gamma}}\left\{\nabla\mu+\eta(\nabla\theta)/w\right\}\eta/w.
\]
\begin{rem} \label{RMir}
In \cite{MS} there are various choices of $j$.
 For example, in one of the models they take $j=-\nabla(\mu/\theta)$.
 Their model is Type I but Fourier's law is replaced by another law to derive equations compatible with the second law of thermodynamics.
 Let us calculate $\xi$ in Theorem~\ref{TT} (i).
 By this choice of $j$, we have
\[
    \xi=\nabla\mu\cdot\nabla\left(\frac{\mu}{\theta}\right)
    -q\cdot \frac{\nabla\theta}{\theta}.
\]
Since $\nabla\mu=\theta\nabla(\mu/\theta)+\frac{\mu}{\theta}\nabla\theta$, we see that
\[
    \xi=\theta\left|\nabla\left(\frac{\mu}{\theta}\right)\right|^2
    -\frac{\nabla\theta}{\theta}\cdot(\mu j+q).
\]
If we set
\[
    -q-\mu j=d\nabla\theta
\]
instead of Fourier's law, then $\xi\ge0$ if and only if $d\ge0$.
 The resulting equation is
\[
    \left\{
    \begin{array}{l}
	\partial_t w=-\operatorname{div}j,\quad j=-\nabla(\mu/\theta) \\
	\partial_t\epsilon=-\operatorname{div}q-\operatorname{div}(\mu j+\psi_{\nabla w}\operatorname{div}j) \\
    -q-\mu j=d\nabla\theta.
    \end{array}
    \right.
\]
The entropy production is
\[
    \xi=\theta\left|\nabla\left(\frac{\mu}{\theta}\right)\right|^2
    +d\frac{|\nabla\theta|^2}{\theta}.
\]
\end{rem}

\section{Evolution law of temperature and equilibrium} \label{SE}

In this section we derive equations for temperature $\theta$ and mass fraction $w$ to see the structure of the Cahn-Hilliard equations.
 We also discuss an equilibrium of the system.

\subsection{Temperature evolution} \label{SSTE}

We shall rewrite
\[
    \partial_t\epsilon=\operatorname{div}(d\nabla\theta)+\operatorname{div}j_m
\]
in the form of evolution of temperature $\theta$.
 We calculate $\partial_t\epsilon$ in a different way as follows.
\[
    \partial_t\epsilon=\kappa\partial_t\theta+V_\mathrm{ISO}
    +\theta(\eta_ww_t+\eta_{\nabla w}\cdot\nabla w_t),
\]
where $\kappa=\epsilon_\theta$ is the volume specific heat content.
 Since $\eta=-\psi_\theta$,
\[
    \kappa=\psi_\theta+\eta+\theta\eta_\theta=-\theta\psi_{\theta\theta}.
\]
By \eqref{EVI}, we see
\[
    V_\mathrm{ISO}=-\operatorname{div}\left((\delta_w\psi)j+\psi_{\nabla w}\operatorname{div}j\right)
    +\nabla(\delta_w\psi)\cdot j.
\]
Since
\[
    \eta_w w_t+\eta_{\nabla w}\cdot \nabla w_t
    =-\eta_w\operatorname{div}j-\eta_{\nabla w}\nabla\operatorname{div}j,
\]
we conclude that
\begin{equation}
   \begin{split} \label{ETEv}
    \kappa\partial_t\theta
    = &\partial_t\epsilon+\operatorname{div}\left((\delta_w\psi)j+\psi_{\nabla w}\operatorname{div}j\right)-\nabla(\delta_w\psi)\cdot j \\
    &+\theta\{\eta_w\operatorname{div}j+\eta_{\nabla w}\cdot\nabla\operatorname{div}j\}.
\end{split} 
\end{equation}
In the case of Type I we know that $\partial_t\epsilon=\operatorname{div}(d\nabla\theta)+\operatorname{div}j_m$ with
\[
    j_m=-\left\{(\delta_w\psi)j+\psi_{\nabla w}\operatorname{div}j \right\}.
\]
The identity \eqref{ETEv} now yields
\[
    \kappa\partial_t\theta=\operatorname{div}(d\nabla\theta) -\nabla(\delta_w\psi)\cdot j
    +\theta\{\eta_w\operatorname{div}j+\eta_{\nabla w}\cdot\nabla\operatorname{div}j\}.
\]
A similar calculation yields evolution equations of temperature for Type I\hspace{-0.1em}I and Type I\hspace{-0.1em}I\hspace{-0.1em}I.
 Let us summarize.
\begin{prop} \label{PReT}
    \begin{enumerate}
        \item[(i)] In the case of Type I, the Cahn-Hilliard equations are of the form
        \begin{align}
        \begin{split} \label{EETI}
            w_t= &-\operatorname{div}j, \quad j=-\gamma\nabla\mu, \quad \mu=\delta_w\psi, \\
            \kappa\theta_t= &\operatorname{div}(d\nabla\theta)-\nabla(\delta_w\psi)\cdot j \\
            & +\theta \left\{\eta_w\operatorname{div}j+\eta_{\nabla w}\cdot\nabla\operatorname{div}j \right\},
        \end{split}
        \end{align}
        where $\kappa=-\theta\psi_{\theta\theta}$.
        \item[(i\hspace{-0.1em}i)] In the case of Type I\hspace{-0.1em}I, the Cahn-Hilliard equations are of the form
        \begin{align}
        \begin{split} \label{EETII}
            w_t= &-\operatorname{div}j, \quad j=-\tilde{\gamma}\nabla(\check{\mu}/\theta), \quad \check{\mu}=\theta\delta_w(\psi/\theta) \\
            \kappa\theta_t= &\operatorname{div}(d\nabla\theta)-\nabla(\delta_w\psi)\cdot j \\
            & +\operatorname{div}\left\{(\delta_w\psi)j+\psi_{\nabla w}\operatorname{div}j\right\} \\
            & +\theta\{\eta_w\operatorname{div}j+\eta_{\nabla w}\cdot\nabla\operatorname{div}j\}.
        \end{split}
        \end{align}
    The second equation can be rewritten as
\[
    \kappa\theta_t=\operatorname{div}(d\nabla\theta)
    +\epsilon_w\operatorname{div}j+\epsilon_{\nabla w}\cdot\nabla\operatorname{div}j.
\]    \item[(i\hspace{-0.1em}i\hspace{-0.1em}i)] In the case of Type I\hspace{-0.1em}I\hspace{-0.1em}I, the Cahn-Hilliard equations are of the form
        \begin{align*}
        \begin{split}
            w_t= &-\operatorname{div}j, \quad j=-\tilde{\tilde{\gamma}}\left(\nabla\mu+\eta\nabla\theta/w\right) \\
            \kappa\theta_t= &\operatorname{div}(d\nabla\theta)-\operatorname{div}\left(\theta\eta j/w \right)-\nabla(\delta_w\psi)\cdot j \\
            & +\theta\{\eta_w \operatorname{div}j+\eta_{\nabla w} \cdot\nabla\operatorname{div}j\}.
        \end{split}
        \end{align*}
    \end{enumerate}
\end{prop}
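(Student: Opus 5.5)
The proof is a direct substitution into the general identity \eqref{ETEv}. That identity was derived for an arbitrary flux $j$, using only \eqref{EW} and the definitions $\eta=-\psi_\theta$, $\epsilon=\psi+\theta\eta$, $\kappa=-\theta\psi_{\theta\theta}$. So for each type we only insert the corresponding expression for $\partial_t\epsilon=\operatorname{div}j_e$ and simplify.

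\textbf{Type I.} We have $j_e=-q+j_m$ with $j_m=-\{(\delta_w\psi)j+\psi_{\nabla w}\operatorname{div}j\}$ and $q=-d\nabla\theta$. The divergence term in \eqref{ETEv} cancels exactly against $\operatorname{div}j_m$, which gives \eqref{EETI}; this computation is already carried out just before the statement.

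\textbf{Type I\hspace{-0.1em}I.} Here $j_e=-q$, so $\partial_t\epsilon=\operatorname{div}(d\nabla\theta)$ and no cancellation occurs. Substituting into \eqref{ETEv} gives \eqref{EETII} verbatim. For the rewritten form, expand
\[
\operatorname{div}\{(\delta_w\psi)j\}-\nabla(\delta_w\psi)\cdot j=(\delta_w\psi)\operatorname{div}j,
\]
and
\[
\operatorname{div}(\psi_{\nabla w}\operatorname{div}j)=(\operatorname{div}\psi_{\nabla w})\operatorname{div}j+\psi_{\nabla w}\cdot\nabla\operatorname{div}j .
\]
Since $\delta_w\psi+\operatorname{div}\psi_{\nabla w}=\psi_w$, the terms combine to $\psi_w\operatorname{div}j+\psi_{\nabla w}\cdot\nabla\operatorname{div}j$. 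Adding $\theta\{\eta_w\operatorname{div}j+\eta_{\nabla w}\cdot\nabla\operatorname{div}j\}$ and using $\epsilon_w=\psi_w+\theta\eta_w$, $\epsilon_{\nabla w}=\psi_{\nabla w}+\theta\eta_{\nabla w}$ (with $\theta$ held fixed, as an independent variable) yields $\epsilon_w\operatorname{div}j+\epsilon_{\nabla w}\cdot\nabla\operatorname{div}j$.

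\textbf{Type I\hspace{-0.1em}I\hspace{-0.1em}I.} We have $j_m=-\{\mu j+\psi_{\nabla w}\operatorname{div}j+\theta\eta j/w\}$. The first two pieces cancel the divergence term in \eqref{ETEv} as in Type I, and the leftover $-\operatorname{div}(\theta\eta j/w)$ remains, giving the stated equation. The flux $j$ enters only through $w_t=-\operatorname{div}j$, so its particular form for each type plays no role in the $\theta$-equation beyond being substituted.

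The main point requiring care is the bookkeeping of the derivative $\partial_t\epsilon=\kappa\theta_t+V_{\mathrm{ISO}}+\theta(\eta_w w_t+\eta_{\nabla w}\cdot\nabla w_t)$ underlying \eqref{ETEv}. Differentiating $\epsilon=\psi+\theta\eta$ in time gives
\[
\partial_t\epsilon=(\psi_\theta+\eta+\theta\eta_\theta)\theta_t+V_{\mathrm{ISO}}+\theta(\eta_w w_t+\eta_{\nabla w}\cdot\nabla w_t),
\]
and $\psi_\theta+\eta=0$ leaves the coefficient $\theta\eta_\theta=-\theta\psi_{\theta\theta}=\kappa$. Since this identity is established before the statement, we may invoke it directly. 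The rest is the algebraic cancellation above; the only nontrivial step is the Type I\hspace{-0.1em}I rewriting, which rests on the identity $\delta_w\psi+\operatorname{div}\psi_{\nabla w}=\psi_w$.
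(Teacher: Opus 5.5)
Your proposal is correct and follows the paper's route. The paper derives \eqref{ETEv} for an arbitrary flux $j$, substitutes the Type I energy flux to obtain \eqref{EETI}, and dismisses Types I\hspace{-0.1em}I and I\hspace{-0.1em}I\hspace{-0.1em}I as ``a similar calculation,'' which is exactly what you carry out. Your check of the rewritten Type I\hspace{-0.1em}I equation via $\delta_w\psi+\operatorname{div}\psi_{\nabla w}=\psi_w$ is also right; it can be seen even more directly from $\partial_t\epsilon=\kappa\theta_t+\epsilon_w w_t+\epsilon_{\nabla w}\cdot\nabla w_t$.
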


Let us pick up the highest derivative terms.
 Since
\[
    \mu=\delta_w\psi=\psi_w-\operatorname{div}\psi_{\nabla w},
\]
it contains the second-order derivative of $w$ while the first-order derivative of $\theta$ as highest order if $\psi$ is given as \eqref{EH1}.
 This yields the following table (Table~\ref{A1}) of the number of order of highest derivative terms in \eqref{EETI} and \eqref{EETII}.
\begin{table}[h!]
  \centering
  \caption{General}
  \begin{tabular}{l|c|c}
    Type I / I\hspace{-0.1em}I / I\hspace{-0.1em}I\hspace{-0.1em}I & $w$ & $\theta$ \\ \hline
    $w$-equation & 4 & 3 \\ \hline
    $\theta$-equation & 5 & 4 \\ \hline
  \end{tabular}
  \label{A1}
\end{table}
\\ In the case of $\eta_{\nabla w}\equiv0$ (or $\partial_\theta\psi_{\nabla w}\equiv0$), it can be modified as follows since the highest order for $\theta$ in $\mu$ or $\check{\mu}$ is now $0$; see Table~\ref{A2}.
\begin{table}[h!]
  \centering
  \caption{The case $\eta_{\nabla w}=0$}
  \begin{minipage}[c]{0.25\hsize}
  \centering
  \begin{tabular}{l|c|c}
    Type I / I\hspace{-0.1em}I\hspace{-0.1em}I & $w$ & $\theta$ \\ \hline
    $w$-equation & 4 & 2 \\ \hline
    $\theta$-equation & 4 & 2 \\ \hline
  \end{tabular}
  \end{minipage}
  \begin{minipage}[c]{0.25\hsize}
    \centering
  \begin{tabular}{l|c|c}
    Type I\hspace{-0.1em}I & $w$ & $\theta$ \\ \hline
    $w$-equation & 4 & 2 \\ \hline
    $\theta$-equation & 5 & 3 \\ \hline
  \end{tabular}
  \label{A2}
    \end{minipage}
\end{table}

If $\psi$ is of the form \eqref{EH1}, $\eta_{\nabla w}=0$ is equivalent to saying that $\varepsilon$ is independent of $\theta$.
 In Sections \ref{SL}--\ref{SS}, we only consider Type I with $\eta_{\nabla w}=0$ since we do not know in what way we have to treat the term $\nabla\operatorname{div}j$ which contains 5th derivative of $w$.

\subsection{Equilibrium} \label{SSEE}

We shall study equilibrium of \eqref{EETI}, \eqref{EETII}.
 We consider these equations in a bounded domain $\Omega$ in $\mathbb{R}^n$ with smooth boundary or a periodic cell $\Omega=\mathbb{T}^n$.
 In the case of a bounded domain we impose boundary conditions
\begin{equation} \label{EBT1}
    \frac{\partial w}{\partial\nu}=0, \quad
    \frac{\partial\mu}{\partial\nu}=0
    \quad\text{and}\quad
    \frac{\partial\theta}{\partial\nu}=0
\end{equation}
for Type I and
\begin{equation} \label{EBT2}
    \frac{\partial w}{\partial\nu}=0, \quad
    \frac{\partial\check{\mu}}{\partial\nu}=0
    \quad\text{and}\quad
    \frac{\partial\theta}{\partial\nu}=0
\end{equation}
for Type I\hspace{-0.1em}I, where $\nu$ denotes the exterior unit normal.
 We say that $(w,\theta)$ ($\theta>0$) is an \emph{equilibrium} of \eqref{EETI} (resp.\ \eqref{EETII}) with \eqref{EBT1} (resp.\ \eqref{EBT2}) if $(w,\theta)$ is a time-independent solution of \eqref{EETI} (resp.\ \eqref{EETII}) with \eqref{EBT1} (resp.\ \eqref{EBT2}).
\begin{prop} \label{PST}
    Assume that $(w,\theta)$ is a smooth equilibrium of \eqref{EETI} (resp.\ \eqref{EETII}) in $\Omega$ satisfying \eqref{EBT1} (resp.\ \eqref{EBT2}).
     Assume that $\gamma$, $\tilde{\gamma}$, $d$ and $\psi$ are smooth functions with respect to their independent variables.
     Assume that $\gamma$, $\tilde{\gamma}$, $d\ge c_0$ with some positive constant $c_0$.
     Then $\mu$ (resp.\ $\check{\mu}$) is a constant and $\theta$ is also constant.
\end{prop}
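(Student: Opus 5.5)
The plan is an energy/entropy argument that uses the conservation form of the equations rather than the temperature form. For Type I, the stationary $w$-equation reads $\operatorname{div}(\gamma\nabla\mu)=0$ with $\partial\mu/\partial\nu=0$ on $\partial\Omega$ (or periodicity). We multiply by $\mu$ and integrate over $\Omega$; the boundary term vanishes, so
\[
\int_\Omega \gamma|\nabla\mu|^2\,dx=0 .
\]
Since $\gamma\ge c_0>0$, this gives $\nabla\mu\equiv0$. Hence $j=0$ and $\mu$ is constant on each component of $\Omega$, which is taken connected.

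Next we turn to the temperature equation \eqref{EETI}. With $j\equiv0$, the terms $-\nabla(\delta_w\psi)\cdot j$ and $\theta\{\eta_w\operatorname{div}j+\eta_{\nabla w}\cdot\nabla\operatorname{div}j\}$ all vanish. The stationary equation therefore reduces to $\operatorname{div}(d\nabla\theta)=0$ with $\partial\theta/\partial\nu=0$. Multiplying by $\theta$ and integrating gives $\int_\Omega d|\nabla\theta|^2\,dx=0$, so $\theta$ is constant because $d\ge c_0$.

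For Type II the first step is the same, applied to the quantity $\check\mu/\theta$. The stationary equation is $\operatorname{div}(\tilde\gamma\nabla(\check\mu/\theta))=0$, and we test it with $\check\mu/\theta$. The difficulty is that the boundary condition \eqref{EBT2} prescribes $\partial\check\mu/\partial\nu=0$, not $\partial(\check\mu/\theta)/\partial\nu=0$. The boundary term is
\[
\int_{\partial\Omega}\tilde\gamma\,\frac{\check\mu}{\theta}\,\partial_\nu\!\left(\frac{\check\mu}{\theta}\right) d\sigma ,
\qquad
\partial_\nu\!\left(\frac{\check\mu}{\theta}\right)=\frac{\partial_\nu\check\mu}{\theta}-\frac{\check\mu\,\partial_\nu\theta}{\theta^2}.
\]
Both pieces vanish because $\partial_\nu\check\mu=0$ and $\partial_\nu\theta=0$, so this term is zero after all. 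We then get $\nabla(\check\mu/\theta)=0$ and hence $j=0$.

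With $j=0$, the temperature equation in the rewritten form of Proposition~\ref{PReT}(ii),
\[
\kappa\theta_t=\operatorname{div}(d\nabla\theta)+\epsilon_w\operatorname{div}j+\epsilon_{\nabla w}\cdot\nabla\operatorname{div}j,
\]
again reduces to $\operatorname{div}(d\nabla\theta)=0$. As before this gives $\theta\equiv$ const. Finally, $\check\mu=\theta\cdot(\check\mu/\theta)$ is a product of two constants, so $\check\mu$ is constant.

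The only genuinely delicate point is the order of the argument: we must kill $j$ first, using only the $w$-equation, before touching the $\theta$-equation. Otherwise the high-order coupling terms, and the sign-indefinite term $-\nabla(\delta_w\psi)\cdot j$ in Type II, would block the energy estimate for $\theta$.

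Two routine points complete the argument. In the periodic case $\Omega=\mathbb{T}^n$ there is no boundary term at all. Smoothness of the equilibrium up to the boundary is what justifies the integrations by parts.
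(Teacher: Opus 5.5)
Your proof is correct and follows the paper's skeleton. You use the stationary $w$-equation with the Neumann condition to get $\nabla\mu=0$ (resp.\ $\nabla(\check\mu/\theta)=0$, via the same boundary computation $\partial_\nu(\check\mu/\theta)=\theta^{-1}\partial_\nu\check\mu-\check\mu\,\theta^{-2}\partial_\nu\theta=0$). You then deduce $j=0$, reduce the $\theta$-equation to $\operatorname{div}(d\nabla\theta)=0$, and for Type II recover $\check\mu=\theta\cdot(\check\mu/\theta)$.

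The only difference is the Liouville-type tool at each step. The paper appeals to the strong maximum principle, which with Neumann data tacitly needs Hopf's boundary-point lemma to exclude a nonconstant boundary maximum. You instead test each equation with the solution itself and integrate by parts. Your energy identity is more elementary: it uses only the divergence structure and the lower bounds $\gamma,\tilde\gamma,d\ge c_0$, and it adapts directly to weak solutions. The maximum-principle argument is pointwise and does not need divergence form. With smooth equilibria on a smooth connected $\Omega$ the two are interchangeable.

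Your closing remark overstates the difficulty, though harmlessly. Stationarity of $w$ already gives $\operatorname{div}j\equiv0$ pointwise, so the terms involving $\operatorname{div}j$ and $\nabla\operatorname{div}j$ drop out whether or not you know $j=0$.

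In Type I the stationary $\theta$-equation is then $\operatorname{div}(d\nabla\theta)+\gamma|\nabla\mu|^2=0$. Integrating it over $\Omega$ with $\partial_\nu\theta=0$ alone forces $\nabla\mu=0$.

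In Type II the term $-\nabla(\delta_w\psi)\cdot j$ combines with $\operatorname{div}((\delta_w\psi)j)$ into $(\delta_w\psi)\operatorname{div}j=0$. This is visible from the rewritten form in Proposition~\ref{PReT}(ii), or directly from $\partial_t\epsilon=\operatorname{div}(d\nabla\theta)$.

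So killing $j$ first is a convenience, not a necessity.
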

\begin{proof}
    Since $w$ is time-independent, it must satisfy
\[
    \operatorname{div}(\gamma\nabla\mu)=0
    \quad\text{in}\quad \Omega
\]
for Type I.
 By \eqref{EBT1} (for $\mu$) and the strong maximum principle $\mu$ must be constant.
 If $\mu$ is a constant, then $j=0$.
 Thus $\theta$ must satisfy $\operatorname{div}d\nabla\theta=0$ by \eqref{EETI}.
 By \eqref{EBT1} for $\theta$ and the strong maximum principle, $\theta$ must be a constant.

For Type I\hspace{-0.1em}I we first prove that $\check{\mu}/\theta$ is a constant.
 Since
\[
    \frac{\partial(\check{\mu}/\theta)}{\partial\nu}
    =\frac1\theta \frac{\partial\check{\mu}}{\partial\nu}
    -\frac{\check{\mu}}{\theta^2} \frac{\partial\theta}{\partial\nu}
    =0 \quad\text{on}\quad \partial\Omega
\]
by \eqref{EBT2} and since
\[
    \operatorname{div}\left(\tilde{\gamma}\nabla(\check{\mu}/\theta)\right)=0,
\]
the strong maximum principle yields $\check{\mu}/\theta$ is a constant so that $j=0$.
 This implies that $\theta$ is constant by $\operatorname{div}(d\nabla\theta)=0$ with $\partial\theta/\partial\nu=0$.
 Thus $\check{\mu}$ is a constant.
\end{proof}

\section{Linearized operator} \label{SL}

We shall analyze Type I of the Cahn-Hilliard equations.
 We start with the mass specific Helmholtz energy $\psi$ in the form of \eqref{EH1}.
 Namely,
\begin{equation} \label{EHEx}
    \psi(w,\theta,\nabla w)
    =\psi_0(w,\theta)+\frac{\varepsilon(w,\theta)}{2}|\nabla w|^2.
\end{equation}
We assume that $\varepsilon$ is independent of $\theta$ so that $\eta_{\nabla w}=0$.
 The Cahn-Hilliard equations \eqref{EETI} are now of the form
\begin{equation}
    \begin{split} \label{ECH1}
    & w_t =-\operatorname{div}j, \quad
    j=-\gamma\nabla\mu,\quad
    \mu=\delta_w\psi, \\
    & \kappa\theta_t =\operatorname{div}(d\nabla\theta)
    +\gamma|\nabla\mu|^2+\theta\eta_w\operatorname{div}j,
    \end{split}
\end{equation}
where $\kappa=-\theta\psi_{\theta\theta}$, $\eta=-\psi_\theta$.
 Since $\varepsilon$ in \eqref{EHEx} is assumed to be independent of $\theta$,
\begin{align*}
    & \psi_\theta=\psi_{0\theta},\quad
    \kappa=-\theta\psi_{0\theta\theta},\quad
    \eta=-\psi_{0\theta},\quad
    \eta_w=-\psi_{0\theta w}, \\
    & \delta_w\psi=\psi_{0w}+\frac{\varepsilon_w}{2}|\nabla w|^2-\operatorname{div}(\varepsilon\nabla w).
\end{align*}
As we see in Section~\ref{SD}, the second law of thermodynamics yields $\gamma\ge0$, $d\ge0$.
 Since $\varepsilon$ is introduced as regularizing the equation, we assume that $\varepsilon>0$.
 We also assume that the volume specific heat content  $\kappa=\epsilon_\theta$ is positive by a physical reason.
 To avoid technical difficulty and complexity, in the sequel we shall assume that
\begin{description}
    \item[\rm (CH1)] $\gamma$, $\kappa$, $d$ and $\varepsilon$ are positive constants.
\end{description}
Under this assumption, $\psi_w=\psi_{0w}$ and $\mu=\psi_{0w}-\varepsilon\Delta w$ in \eqref{ECH1}.

The equation \eqref{ECH1} is still quasi-linear.
 In fact, we observe that
\begin{align*}
    -\operatorname{div}j &= \gamma\Delta\mu
    =\gamma\Delta\left(\psi_{0w}(w,\theta)-\varepsilon\Delta w\right), \\
    \Delta\psi_{0w} &= \operatorname{div}(\psi_{0w\theta}\nabla\theta)
    +\operatorname{div}(\psi_{0ww}\nabla w)\\
    &= -\eta_w\Delta\theta+\psi_{0ww}\Delta w \\
    +&\sum_{j=1}^n\partial_j U^T \partial_{w,\theta}^2\psi_{0w}\partial_j U,\quad
    U=(w,\theta)^T,
\end{align*}
where $\partial_{w,\theta}^2 f$ denotes the Hesse matrix of $f$.
 Thus
\[
    \operatorname{div}j=\gamma\varepsilon\Delta^2 w
    +\gamma\eta_w\Delta\theta
    +\text{lower order terms}.
\]
The equation \eqref{ECH1} is formally rewritten in the form
\begin{equation} \label{EAb1}
    \frac{dU}{dt}+A(U)U=F(U),
\end{equation}
where
\begin{align*}
    A(U)&=
    \begin{pmatrix}
    \gamma\varepsilon\Delta^2 & \gamma\eta_w\Delta \\
    -\kappa^{-1}\theta\eta_w\gamma\varepsilon\Delta^2 
    & -\kappa^{-1}(d+\theta\eta_w^2\gamma)\Delta
    \end{pmatrix}, \quad
    \eta_w=-\psi_{0\theta w}(w,\theta) \\
    F(U)&=
    \dbinom{\gamma f(U)}{\kappa^{-1}\gamma|\nabla\mu|^2
    +\kappa^{-1}\theta\eta_w\gamma f(U)}, \quad
    U=(w,\theta)^T
\end{align*}
with
\[
    f(U)=\psi_{0ww}\Delta w+\sum_{j=1}^n \partial_jU^T
    \partial_{w,\theta}^2\psi_{0w}\partial_jU.
\]

\subsection{Linearization} \label{SSL}

To discuss the stability of an equilibrium, it is useful to linearize the equation \eqref{EAb1} around the equilibrium.
 Let $(w^e,\theta^e)$ be an equilibrium of \eqref{ECH1} (under boundary condition \eqref{EBT1} if $\Omega$ has a boundary).
 We set $w=w^e+\tilde{w}$, $\theta=\theta^e+\tilde{\theta}$.
 By Proposition~\ref{PST}, we see that
\[
    \mu^e=\psi_{0w}(w^e,\theta^e)-\varepsilon\Delta w^e
\]
is a constant.
 Since
\begin{align*}
    \mu &=\psi_{0w}(w^e+\tilde{w},\theta^e+\tilde{\theta})
    -\varepsilon\Delta(w^e+\tilde{w}) \\
    &=\psi_{0ww}(w^e,\theta^e)\tilde{w}+\psi_{0w\theta}(w^e,\theta^e)\tilde{\theta}
    +\psi_{0w}(w^e,\theta^e)-\varepsilon\Delta w^e-\varepsilon\Delta\tilde{w} \\
    &+o\left(|\tilde{w}|+|\tilde{\theta}|\right) \\
    &= -\eta_w(w^e,\theta^e)\tilde{\theta}
    +\psi_{0ww}(w^e,\theta^e)\tilde{w}
    -\varepsilon\Delta\tilde{w}
    +\mu^e+o\left(|\tilde{\theta}|+|\tilde{w}|\right)
\end{align*}
as $\tilde{\theta}\to0$, $\tilde{w}\to0$, $\nabla\mu^e=0$ implies that
\[
    \nabla\mu=\nabla(-\varepsilon\Delta\tilde{w}+\psi_{0ww}^e\tilde{w})
    -\nabla(\eta_w^e\tilde{\theta})
    +o\left(|\nabla\tilde{w}|+|\nabla\tilde{\theta}|+|\tilde{w}|+|\tilde{\theta}|\right),
\]
where $\eta_w^e=\eta_w(w^e,\theta^e)$, $\psi_{0ww}^e=\psi_{0ww}(w^e,\theta^e)$.
 The term $|\nabla\mu|^2$ is a term of degree more than or equal to two for $\tilde{\theta}$, and $\tilde{w}$ so it is ignored in this linearization.
 We also linearize the term $\theta\eta_w\operatorname{div}\nabla\mu$ at $(w^e,\theta^e)$.
 Since $\mu^e$ is a constant, it turns out that linear part of $\theta\eta_w\operatorname{div}\nabla\mu$ with respect to $\tilde{w}$, $\tilde{\theta}$ equals $\theta^e\eta_w^e$ times linear part of $\operatorname{div}\nabla\mu$.
 Thus, the linearized operator $A_0$ of $A(U)U-F(U)$ at $U=(w^e,\theta^e)^T$ equals
\[
    A_0\tilde{U}=
    \dbinom{\operatorname{div}J}{-\kappa^{-1}\theta^e\eta_w^e\operatorname{div}J
    -\kappa^{-1}d\Delta\theta}
\]
with $J=\gamma\nabla(\varepsilon\Delta\tilde{w}-\psi_{0ww}^e\tilde{w})+\gamma\Delta(\eta_w^e\tilde{\theta})$.
 In other words,
\[
    A_0\tilde{U}=
    \begin{pmatrix}
    \gamma\varepsilon\Delta^2-\gamma\Delta(\psi_{0ww}^e\cdot) & \gamma\Delta(\eta_w^e\cdot) \\
    -\kappa^{-1}\theta^e\eta_w^e\left(\gamma\varepsilon\Delta^2-\gamma\Delta(\psi_{0ww}^e\cdot)\right) 
    & -\kappa^{-1}\left(d\Delta+\gamma\theta^e\eta_w^e\Delta(\eta_w^e\cdot)\right)
    \end{pmatrix}
    \dbinom{\tilde{w}}{\tilde{\theta}}.
\]

We next fix function spaces to define $A$, $F$ and $A_0$.

\subsection{Function spaces} \label{SSF}

Let $q$ be $1
<q<\infty$.
 We take a function space
\[
    X_0=L^q(\Omega)\times L^q(\Omega).
\]
Let $L$ be the negative Laplace operator in $L^q(\Omega)$ (with the Neumann boundary condition if $\Omega$ has a boundary).
 Namely, if $\Omega$ is a smooth bounded domain in $\mathbb{R}^n$, then
\[
    Lz=-\Delta z, \quad
    z\in D(L)
\]
with the domain
\[
    D(L)=\left\{z\in W^{2,q}(\Omega) \biggm|
    \frac{\partial z}{\partial\nu}=0\ \text{on}\ \partial\Omega\right\}.
\]
In the case $\Omega=\mathbb{T}^n$, $D(L)$ is simply $W^{2,q}(\Omega)$, where $W^{m,q}$ denotes the $L^q$-Sobolev space of order $m=1,2,\ldots$.
 We shall rewrite \eqref{EAb1} under \eqref{EBT1}.
 By rescaling time by $\tau=t\gamma$, we may assume $\gamma=1$ by changing $d$ by $d/\gamma$.
 We set $X_1=D(L^2)\times D(L)$.
 Our system \eqref{ECH1} with \eqref{EBT1} for $U^T\in X_1$ is now written as \eqref{EAb1} by setting
\begin{equation} \label{EOpA}
    A(U)=
    \begin{pmatrix}
    \varepsilon L^2 & -\eta_w L \\
    -\kappa^{-1}\theta\eta_w\varepsilon L^2 
    & \kappa^{-1}(d+\theta\eta_w^2)L
    \end{pmatrix}
\end{equation}
with
\begin{equation} \label{EOpF1}
    F(U)=
    \dbinom{f(U)}{\kappa^{-1}|\nabla\mu|^2
    +\kappa^{-1}\theta\eta_w f(U)},
\end{equation}
where
\begin{equation} \label{EOpF2}
    f(U)=-\psi_{0ww}Lw
    +\sum_{j=1}^n \partial_j U^T\partial_{w,\theta}^2\psi_{0w}\partial_j U.
\end{equation}
Indeed, $w\in D(L)$ and $Lw\in D(L)$ implies that $\partial(Lw)/\partial\nu=0$ on $\partial\Omega$.
 Since $\mu=\psi_{0w}+\varepsilon Lw$, $\partial w/\partial\nu=0$ with $\partial(Lw)/\partial\nu=0$ and $\partial\theta/\partial\nu=0$ yields
\[
    \frac{\partial\mu}{\partial\nu}
    =\psi_{0ww}(w,\theta)\frac{\partial w}{\partial\nu}
    +\psi_{0w\theta}(w,\theta)\frac{\partial\theta}{\partial\nu}+\frac{\partial}{\partial\nu}(Lw)=0.
\]
Thus the boundary condition \eqref{EBT1} for $w$ and $\mu$ are realized in this formulation.
 The linearized operator around $(w^e,\theta^e)$ can be written
\begin{equation} \label{EOpA0}
    A_0\tilde{U}=
    \begin{pmatrix}
    LL^\varepsilon & -L(\eta_w^e\cdot) \\
    -\kappa^{-1}\theta^e\eta_w^e LL^\varepsilon 
    & \kappa^{-1}\left(dL+\theta^e\eta_w^e L(\eta_w^e\cdot)\right)
    \end{pmatrix}
    \dbinom{\tilde{w}}{\tilde{\theta}}
\end{equation}
with
\begin{equation} \label{EOpB}
    L^\varepsilon w=\varepsilon L+\psi_{0ww}^e.
\end{equation}

\section{Local well-posedness} \label{SW}

There seems to be many ways to construct a local solution of \eqref{EAb1} with \eqref{EOpA}, \eqref{EOpF1}.
 Here we shall apply an abstract result \cite[Theorem~5.1.1]{PS}.
 Let $X_0$ and $X_1$ be two complex Banach spaces such that $X_1$ is densely embedded in $X_0$.
 Let $(X_0,X_1)_{\theta,q}$ denote its real interpolation space \cite{L2}, \cite{PS}.
 We set
\[
    X_\gamma=(X_0,X_1)_{1-1/p,p}
\]
for a given $p\in(1,\infty)$.
 Let $V$ be an open set of $X_\gamma$.
 We consider
\begin{equation} \label{EAF}
    (\mathcal{A},\mathcal{F}):V\to\mathcal{L}(X_1,X_0)\times X_0
\end{equation}
and a quasilinear evolution equation
\begin{equation} \label{EAb2}
    \frac{dv}{dt}+\mathcal{A}(v)v=\mathcal{F}(v)
\end{equation}
with initial condition $v(0)=v_0\in V$.
 Here, $\mathcal{L}(X_1,X_0)$ is the space of all bounded linear operators from $X_1$ to $X_0$.
 Let $\mathbb{E}_0$ be a base space defined by
\[
    \mathbb{E}_0=L^p(0,T; X_0)
\]
and $\mathbb{E}_1$ be a solution space defined by
\[
    \mathbb{E}_1=W^{1,p}(0,T;X_0)\cap L^p(0,T; X_1)
\]
for $T>0$.
\begin{prop} \label{PPSL}
Assume that $p\in(1,\infty)$.
 Assume that $v_0\in V$ and that the mapping $(\mathcal{A},\mathcal{F})$ in \eqref{EAF} is Lipschitz continuous.
 Assume that $\mathcal{A}(v_0)$ has the ($L^p$-)maximal regularity on $(0,T)$, i.e.,
\[
    v\mapsto\left(\frac{dv}{dt}+\mathcal{A}(v_0)v,v(0)\right)
\]
is an isomorphism from $\mathbb{E}_1$ to $\mathbb{E}_0\times X_\gamma$.

Then there exist $T_0=T_0(v_0)>0$ and $\delta=\delta(v_0)$ such that if $v_1\in V$ satisfies $\|v_1-v_0\|_{X_\gamma}\le\delta$ then \eqref{EAb2} with $v(0)=v_1$ has a unique solution $v\in\mathbb{E}_1\cap C\left([0,T_0];V\right)$ on $[0,T_0]$.
 The mapping $v_1\mapsto v$ is locally Lipschitz continuous.
\end{prop}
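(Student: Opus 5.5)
This is the standard quasilinear result (essentially \cite[Theorem~5.1.1]{PS}). I would prove it by a contraction mapping argument in a closed subset of $\mathbb{E}_1$, using the maximal regularity of the frozen operator $\mathcal{A}(v_0)$.

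\textbf{Step 1: preliminary facts.}
\begin{itemize}
\item The trace embedding $\mathbb{E}_1\hookrightarrow C([0,T];X_\gamma)$ holds. Its constant can be made uniform in $T\le T_1$ on the subspace ${}_0\mathbb{E}_1$ of functions vanishing at $t=0$.
\item Maximal regularity of $\mathcal{A}(v_0)$ on $(0,T_1)$ gives maximal regularity on every $(0,T)$ with $T\le T_1$. The solution operator restricted to zero initial data has norm bounded uniformly in $T$; to see this, extend the right-hand side by zero.
\item Let $v^*$ be the solution of the linear problem $\dot v^*+\mathcal{A}(v_0)v^*=\mathcal{F}(v_0)$ with $v^*(0)=v_1$, and let $v_0^*$ be the one with $v_0^*(0)=v_0$. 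Then $\|v^*-v_0^*\|_{\mathbb{E}_1}\le C\|v_1-v_0\|_{X_\gamma}$.
\item By continuity of the trace, $\|v^*(t)-v_0\|_{X_\gamma}$ stays small for $t$ small.
\end{itemize}

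\textbf{Step 2: the fixed-point map.} Set
\[
\mathbb{B}=\{v\in\mathbb{E}_1:\ v(0)=v_1,\ \|v-v^*\|_{\mathbb{E}_1(0,T)}\le r\}.
\]
For $r,\delta,T$ small, every $v\in\mathbb{B}$ takes values in a small $X_\gamma$-ball around $v_0$ contained in $V$. Define $\Phi(v)=u$, where $u$ solves
\[
\dot u+\mathcal{A}(v_0)u=\mathcal{F}(v)+\bigl(\mathcal{A}(v_0)-\mathcal{A}(v)\bigr)v,\qquad u(0)=v_1 .
\]
Then $u-v^*$ has zero initial value. Maximal regularity bounds $\|u-v^*\|_{\mathbb{E}_1}$ by the $\mathbb{E}_0$-norm of
\[
\mathcal{F}(v)-\mathcal{F}(v_0)+\bigl(\mathcal{A}(v_0)-\mathcal{A}(v)\bigr)v .
\]

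\textbf{Step 3: estimates, which are the main obstacle.}
\begin{itemize}
\item For the operator term, Lipschitz continuity gives $\|\mathcal{A}(v_0)-\mathcal{A}(v(t))\|_{\mathcal{L}(X_1,X_0)}\le L\sup_t\|v(t)-v_0\|_{X_\gamma}$, which is small. The $L^p(X_1)$-norm of $v$ is controlled by $\|v^*\|_{\mathbb{E}_1}+r$.
\item For the $\mathcal{F}$ term, $\|\mathcal{F}(v)-\mathcal{F}(v_0)\|_{\mathbb{E}_0}\le T^{1/p}L\sup_t\|v-v_0\|_{X_\gamma}$.
\item One must make $\|v^*\|_{L^p(0,T;X_1)}$ small as $T\to0$. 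This follows from absolute continuity of the integral applied to $v_0^*$, together with the Step 1 estimate for $v^*-v_0^*$.
\end{itemize}
With these, $\Phi$ maps $\mathbb{B}$ into itself. The difference $\Phi(v)-\Phi(\bar v)$ has zero initial value, and the same estimates make $\Phi$ a contraction; the uniform-in-$T$ trace constant on ${}_0\mathbb{E}_1$ is essential here. Banach's fixed point theorem gives existence.

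\textbf{Step 4: uniqueness and Lipschitz dependence.} Uniqueness in $\mathbb{E}_1\cap C([0,T_0];V)$ follows by running the contraction estimate on short intervals. Local Lipschitz dependence on $v_1$ follows from the same difference estimates, with an additional term $C\|v_1-\bar v_1\|_{X_\gamma}$ coming from the initial data.
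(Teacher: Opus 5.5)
Your proposal is correct and is essentially the argument the paper relies on. The paper gives no independent proof: it invokes \cite[Theorem~5.1.1]{PS}, remarking only that a finite interval $(0,T)$ suffices. That theorem is proved exactly by your contraction in a ball around the reference solution of the frozen linear problem, using maximal regularity of $\mathcal{A}(v_0)$ together with $T$-uniform trace and solution-operator bounds on functions vanishing at $t=0$. In the uniqueness continuation, make explicit that you keep freezing at $\mathcal{A}(v_0)$, which works because the constructed solution stays in a small $X_\gamma$-ball around $v_0$; maximal regularity of $\mathcal{A}(v(t_*))$ is not among the hypotheses.
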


In \cite[Theorem~5.1.1]{PS}, $T$ is assumed to be $\infty$.
 However, finite $T$ is enough to prove local existence.
 For function spaces, more general setting is included.
 Other than the space $X_\gamma$, the trace space of weighted Sobolev space $X_{\gamma,\mu}$ is discussed.
 Moreover, regularizing effect is also discussed.

\subsection{Abstract form} \label{SSAb}

We set $X_0=L^q(\Omega)\times L^q(\Omega)$ ($1<q<\infty$) and $X_1=D(L^2)\times D(L)$ so that
\[
    X_\gamma=(X_0,X_1)_{1-1/p,p}
    \subset B_{q,p}^{4(1-1/p)}(\Omega)
    \times B_{q,p}^{2(1-1/p)}(\Omega),
\]
where $B_{q,p}^s$ is an $L^q$-Besov space of order $s$ with summability exponent $p$.
 For $U=(w,\theta)^T\in X_\gamma$ we define $A(U)$ and $F(U)$ by \eqref{EOpA} and \eqref{EOpF1}, \eqref{EOpF2}, where
\[
    \eta_w(w,\theta)=-\psi_{0w\theta}(w,\theta)
\]
since we have assumed that $\psi=\psi_0+\varepsilon|\nabla w|^2/2$ with constant $\varepsilon>0$.
 By the Sobolev embedding (see e.g.\ \cite{Sa})
\begin{equation} \label{ESo}
    B_{q,p}^s(\Omega)\subset C(\bar{\Omega})\quad \text{for}\quad 
    \frac1q<\frac{s}{n},\quad s>0.
\end{equation}
For example, $\partial_j\in\mathcal{L}\left(B_{q,p}^{2(1-1/p)}(\Omega),C(\bar{\Omega})\right)$ if
\[
    \frac{n}{q}+\frac2p<1
\]
since $B_{q,p}^{1-2/p}(\Omega)\subset C(\bar{\Omega})$ for this choice of exponent.
\begin{prop} \label{PAR}
Assume that $1/p+n/(2q)<1$ for $p,q\in(1,\infty)$.
 Assume that $\psi_0$ is $C^3$ in its variables $(w,\theta)$.
 Then $A$ defined by \eqref{EOpA} is $C^1$ from $X_\gamma$ to $\mathcal{L}(X_1,X_0)$.
 Assume that $2/p+n/q<1$ and that $\psi_0$ is $C^4$.
 Then, $F$ defined by \eqref{EOpF1}, \eqref{EOpF2} is $C^1$ from $X_\gamma$ to $X_0$ for $p$ satisfying $4/p+n/(2q)<1$.
\end{prop}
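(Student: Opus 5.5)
The plan is to reduce both claims to Sobolev embeddings of the trace space $X_\gamma\subset B_{q,p}^{4(1-1/p)}(\Omega)\times B_{q,p}^{2(1-1/p)}(\Omega)$ into spaces of continuous functions. Once the relevant derivatives of $U$ are known to be bounded and continuous, composition with smooth functions of $(w,\theta)$ and multiplication are $C^1$ operations. Below we write $U=(w,\theta)^T$.

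\textbf{Step 1: the operator $A$.} The entries of $A(U)$ in \eqref{EOpA} are the fixed operators $\varepsilon L^2$, $L$, $L^2$. They are multiplied by the scalar coefficients $\eta_w(w,\theta)=-\psi_{0w\theta}(w,\theta)$, $\theta\eta_w$ and $\theta\eta_w^2$, where $\kappa$, $d$, $\varepsilon$ are constants.

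Each such operator maps $X_1$ boundedly into $L^q(\Omega)$. Hence $\|A(U)-A(\bar U)\|_{\mathcal{L}(X_1,X_0)}$ is controlled by the sup-norms of the coefficient differences. It therefore suffices to show that $U\mapsto g(w,\theta)$ is $C^1$ from $X_\gamma$ into $C(\bar\Omega)$ for $g\in C^2$; here $g$ is built from $\psi_{0w\theta}\in C^1$ when $\psi_0\in C^3$, times polynomials in $\theta$.

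For this we need $X_\gamma\hookrightarrow C(\bar\Omega)^2$. By \eqref{ESo}, the $\theta$-component requires $2(1-1/p)>n/q$, i.e.\ $1/p+n/(2q)<1$, which is the hypothesis. The $w$-component has twice the smoothness, so it is automatically continuous.

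Then the Nemytskii map $U\mapsto g(U)$ is $C^1$ on $C(\bar\Omega)^2$, with derivative $h\mapsto \nabla g(U)\cdot h$. This uses only uniform continuity of $\nabla g$ on compact sets, so $g\in C^1$ is enough for $C^1$ dependence. Composing with the embedding gives the first claim.

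\textbf{Step 2: the map $F$.} We look at each term in \eqref{EOpF1}, \eqref{EOpF2}: $\psi_{0ww}Lw$, the quadratic form $\partial_jU^T\partial^2_{w,\theta}\psi_{0w}\partial_jU$, $|\nabla\mu|^2$ with $\mu=\psi_{0w}+\varepsilon Lw$, and $\theta\eta_w f(U)$. The most demanding term is
\[
\nabla\mu=\psi_{0ww}\nabla w+\psi_{0w\theta}\nabla\theta+\varepsilon\nabla Lw .
\]
We need $|\nabla\mu|^2\in L^q$ with $C^1$ dependence on $U$, and plan to put $\nabla\theta$, $\nabla w$, $\nabla Lw$ into $C(\bar\Omega)$.
\begin{itemize}
\item $\nabla Lw$ involves three derivatives of $w\in B^{4-4/p}_{q,p}$. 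By \eqref{ESo} it is continuous if $1-4/p>n/q$, or in the weaker form stated, if $4/p+n/(2q)<1$ (in which case we only need $\nabla Lw\in L^{2q}$, by the Sobolev embedding $B^{1-4/p}_{q,p}\subset L^{2q}$).
\item $\nabla\theta\in C(\bar\Omega)$ requires $2/p+n/q<1$, as already noted before the proposition.
\item $Lw\in B^{2-4/p}_{q,p}\subset L^q$ is harmless.
\end{itemize}

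With these embeddings, every term of $F$ is a product of coefficients $g(U)$, with $g\in C^1$ because $\psi_0\in C^4$ makes $\partial^2\psi_{0w}$ of class $C^1$, and derivatives of $U$. These derivatives lie in $C(\bar\Omega)$, or in $L^{2q}$ for the top-order factor, or in $L^q$ linearly.

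Bilinear products $C\times L^q\to L^q$ and $L^{2q}\times L^{2q}\to L^q$ are bounded, hence smooth. Composing them with the $C^1$ Nemytskii maps from Step~1 and the bounded linear maps $U\mapsto\partial_jU$, $\nabla Lw$, $Lw$ gives $F\in C^1(X_\gamma,X_0)$ by the chain rule.

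\textbf{Main obstacle.} The delicate point is bookkeeping of the exponents for the highest-order term $|\nabla Lw|^2$: we must place $\nabla Lw$ in $L^{2q}$ (or $C$) exactly under $4/p+n/(2q)<1$. We would first try the embedding $B^{1-4/p}_{q,p}\subset L^{2q}$, valid when $1-4/p\ge n/(2q)$. We would also check carefully that $\nabla Lw$ gains no boundary-related loss, since the Neumann condition is compatible with the interpolation identification.
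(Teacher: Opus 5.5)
Your proposal is correct and follows essentially the same route as the paper. You embed $X_\gamma$ into continuous functions so that the coefficients become $C^1$ Nemytskii maps, put $\partial_jU$ in $C(\bar\Omega)$ under $2/p+n/q<1$, and place $\nabla Lw$ in $L^{2q}$ via $B^{1-4/p}_{q,p}\subset L^{2q}$ so that $|\nabla\mu|^2\in L^q$. The only cosmetic difference is that the paper obtains $Lw\in L^q$ from $(L^q,D(L^2))_{1-1/p,p}\subset D(L)$ for $p>2$ using fractional powers, whereas you use the Besov embedding $B^{2-4/p}_{q,p}\subset L^q$.
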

\begin{proof}
By \eqref{ESo}, the mapping $(w,\theta)\mapsto\eta_w$ is $C^1$ from $X_\gamma$ to $C(\bar{\Omega})$ since $\psi_0$ is $C^3$.
 By definition of $A$, it is now clear that $A\in C^1\left(X_\gamma,\mathcal{L}(X_1,X_0)\right)$ since $B_{q,p}^{2(1-1/p)}(\Omega)\subset C(\bar{\Omega})$ if $1/p+n/(2q)<1$.

We next study $f(U)$ in \eqref{EOpF2}.
 Since
\[
    \left(L^q(\Omega),D(L^2)\right)_{1-1/p,p}\subset D(L)
    =D\left((L^2)^{1/2}\right)
    \quad\text{for}\quad p>2
\]
by comparing domains of fractional powers and real interpolation \cite{L2}, we observe that $(w,\theta)\mapsto Lw$ is a bounded linear operator from $X_\gamma$ to $L^q(\Omega)$.
 Since $(w,\theta)\mapsto\psi_{0ww}$ is $C^1$ from $X_\gamma$ to $C(\bar{\Omega})$, the first term of $f(U)$ is $C^1$ from $X_\gamma$ to $L^q(\Omega)$.
 By the Sobolev embedding \eqref{ESo} $\partial_j$ is a bounded linear operator from $X_\gamma$ to $C(\bar{\Omega})$.
 The exponent $(p,q)$ satisfying $2/p+n/q<1$ is taken so that $\partial_j U\in C(\bar{\Omega})\times C(\bar{\Omega})$ for $U^T=(w,\theta)\in X_\gamma$.
 Thus the second term of $f(U)$ is $C^1$ from $X_\gamma$ to $C(\bar{\Omega})$ since $\partial_{w,\theta}^2\psi_{0w}$ is $C^1$.
 We now conclude that $f$ is $C^1$ from $X_\gamma$ to $L^q(\Omega)$.

It remains to prove that $|\nabla\mu|^2$ is $C^1$ from $X_\gamma$ to $L^q(\Omega)$.
 We recall
\[
    \mu=\psi_{0w}+\varepsilon Lw
\]
so that
\[
    \nabla\mu=\psi_{0w\theta}\nabla\theta
    +\psi_{0ww}\nabla w+\varepsilon\nabla Lw.
\]
As we already observed, the first two terms are $C^1$ from $X_\gamma$ to $C(\bar{\Omega})$.
 Since $\nabla\Delta$ is bounded from $B_{q,p}^{4(1-1/p)}(\Omega)$ to $B_{q,p}^{1-4/p}(\Omega)$, the operator $\nabla Lw$ is a bounded linear operator from $\left(L^q(\Omega),D(L^2)\right)_{1-1/p,p}$ to $L^q(\Omega)$ provided that $4/p+n/(2q)<1$ since $B_{q,p}^{1-4/p}(\Omega)\subset L^{2q}(\Omega)$.
 We thus conclude that $\nabla\mu$ is $C^1$ from $X_\gamma$ to $\left[L^{2q}(\Omega)\right]^n$, and $|\nabla\mu|^2\in L^q(\Omega)$.
\end{proof}

\subsection{Maximal regularity} \label{SSM}

We shall prove $L^p$-maximal regularity on $(0,\infty)$ of $A(U^*)+\omega$($:= A(U^*)+\omega I$) for some $\omega>0$ by a kind of perturbation argument.
 (Here $I$ denotes the identity operator.
 We often suppress $I$ in the sequel.)
 To achieve this goal, we use notion of $\mathcal{R}$-boundedness of resolvent type operators as in \cite{PS}.
 Let us briefly recall the theory.

Let $X$ and $Y$ be Banach spaces and let $\mathcal{T}$ be a family of operators in $\mathcal{L}(X,Y)$.
 We say that $\mathcal{T}$ is \emph{$\mathcal{R}$-bounded} if there is a positive constant $C$ such that the inequality
\begin{equation} \label{ERB}
    \int_0^1\left\|\sum_{j=1}^k r_j(t)T_jx_j\right\|_Y^2 dt
    \le C\int_0^1\left\|\sum_{j=1}^k r_j(t)x_j\right\|_X^2 dt
\end{equation}
is valid for all $\{T_j\}_{j=1}^k\subset\mathcal{T}$ and $\{x_j\}_{j=1}^k\subset X$, where $r_j(t)$ is the Rademacher function defined by
\[
    r_j(t)=\operatorname{sgn}\left(\sin(2^j\pi t)\right).
\]
By Kahane's inequality, the estimate \eqref{ERB} is valid if the power $2$ is replaced by other power $p\in[1,\infty)$.
 The smallest $C$ in \eqref{ERB} is called \emph{$\mathcal{R}$-bound} for $\mathcal{T}$ and is denoted by $\mathcal{R}(\mathcal{T})$.
 See \cite[Definition~4.1.1]{PS} for more sophisticated definition.

Let $A$ be a closed operator with dense domain and dense range in $X$.
 Let
\[
    \mathcal{T}_\varphi(A)
    =\left\{ \lambda(\lambda+A)^{-1} \bigm|
    -\lambda\not\in\bar{\Sigma}_\varphi \right\}
\]
with $\Sigma_\varphi=\left\{z\in\mathbb{C}\bigm| |\arg z|<\varphi,\ z\neq0\right\}$.
 We say that $A$ is \emph{$\mathcal{R}$-sectorial} with angle $\varphi_A\in[0,\pi)$ if
\[
    \varphi_A
    =\inf\left\{ \varphi\bigm|
    \mathcal{R}\left(\mathcal{T}_\varphi(A)\right)<\infty \right\}
\]
is defined.
 This notion characterizes $L^p$-maximal regularity \cite[Theorem~4.4.4]{PS}.
\begin{lem} \label{LRMR}
Let $X$ be a Banach space of class $\mathcal{HT}$ and $1<p<\infty$.
 Then $\mathcal{A}$ has $L^p$-maximal regularity on $(0,\infty)$ if and only if $\mathcal{A}$ is $\mathcal{R}$-sectorial with angle $\varphi_{\mathcal{A}}<\pi/2$.    
\end{lem}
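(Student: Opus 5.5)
Lemma~\ref{LRMR} is the Weis characterization of $L^p$-maximal regularity, quoted from \cite[Theorem~4.4.4]{PS}. I would prove it by passing from the time domain to the resolvent side through the Fourier transform and then applying the operator-valued Fourier multiplier theorem in $\mathcal{HT}$ spaces. Throughout, $\mathcal{A}$ is closed, densely defined and injective with dense range, as in the definition of $\mathcal{R}$-sectoriality. Maximal regularity on $(0,\infty)$ means the following: for every $f\in L^p(\mathbb{R}_+;X)$, the solution $v$ of $v'+\mathcal{A}v=f$, $v(0)=0$, satisfies
\[
\|v'\|_{L^p}+\|\mathcal{A}v\|_{L^p}\le C\|f\|_{L^p}.
\]
Extending $f$ by zero, this reduces (by causality) to the same estimate on the whole line. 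Formally $\widehat{\mathcal{A}v}(\xi)=M(\xi)\hat f(\xi)$ with
\[
M(\xi)=\mathcal{A}(i\xi+\mathcal{A})^{-1}=I-i\xi(i\xi+\mathcal{A})^{-1}.
\]
So maximal regularity is equivalent to $M$ being a bounded $L^p(\mathbb{R};X)$ Fourier multiplier.

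\emph{Sufficiency.} Suppose $\mathcal{R}(\mathcal{T}_\varphi(\mathcal{A}))<\infty$ for some $\varphi<\pi/2$. Then $i\mathbb{R}\setminus\{0\}\subset\rho(-\mathcal{A})$. The family $\{M(\xi)\}$ is $\mathcal{R}$-bounded, and so is $\{\xi M'(\xi)\}$, since
\[
\xi M'(\xi)=-i\xi(i\xi+\mathcal{A})^{-1}\,\mathcal{A}(i\xi+\mathcal{A})^{-1}
\]
is a product of two $\mathcal{R}$-bounded families. Products of $\mathcal{R}$-bounded families are $\mathcal{R}$-bounded, and $\mathcal{R}$-bounds are stable under sums and the identity. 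Weis' operator-valued Mikhlin theorem, which is valid since $X\in\mathcal{HT}$, then gives boundedness of the multiplier. Hence the estimate holds first for nice $f$ (say with $\hat f$ compactly supported away from $0$), and by density for all $f$. One checks that the resulting $v$ is the mild solution given by the analytic semigroup; the semigroup exists because $\varphi<\pi/2$ implies sectoriality with angle $<\pi/2$.

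\emph{Necessity.} This is the harder direction. First, maximal regularity implies that $-\mathcal{A}$ generates a bounded analytic semigroup; this follows from Dore's argument, testing $f$ of the form $e^{-\lambda t}$-type functions. That yields sectoriality with some angle $<\pi/2$, but only uniform boundedness, not $\mathcal{R}$-boundedness. To upgrade, take $f_j(t)=e^{i\xi_jt}\chi(t)x_j$ with randomized signs. Using boundedness of the solution operator on $L^p(\mathbb{R};X)$ together with Kahane's inequality and Fubini, one obtains \eqref{ERB} for $\{i\xi(i\xi+\mathcal{A})^{-1}\}$ on the imaginary axis. This is Clément--Prüss's result that bounded multipliers have $\mathcal{R}$-bounded symbols at Lebesgue points. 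A power-series (Neumann) perturbation argument then extends $\mathcal{R}$-boundedness from $i\mathbb{R}$ to a sector $\Sigma_{\pi/2+\delta}$, giving $\varphi_{\mathcal{A}}<\pi/2$.

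The main obstacle is the necessity direction, specifically extracting $\mathcal{R}$-boundedness from multiplier boundedness. Since the lemma is stated as a recall of \cite[Theorem~4.4.4]{PS}, in the paper I would simply cite it rather than reproduce this part.
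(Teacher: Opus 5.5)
Your route is the right one and agrees with the paper. The paper gives no argument of its own and simply quotes \cite[Theorem~4.4.4]{PS}. The argument behind that citation is essentially the one you outline: Weis' operator-valued Mikhlin theorem for sufficiency, and analytic-semigroup generation plus the Cl\'ement--Pr\"uss $\mathcal{R}$-boundedness of symbols of bounded multipliers for necessity. You are also right to use the homogeneous estimate $\|v'\|_{L^p}+\|\mathcal{A}v\|_{L^p}\le C\|f\|_{L^p}$. If maximal regularity were read as the $\mathbb{E}_1$-isomorphism of Proposition~\ref{PPSL} on $(0,\infty)$, the `if' part would additionally require $0\in\rho(\mathcal{A})$. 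That is harmless here, because the paper only applies the lemma to invertible shifted operators.

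One step of your necessity sketch fails as stated: the extension from $i\mathbb{R}$ to $\Sigma_{\pi/2+\delta}$ by a Neumann series alone. Let $K$ be the $\mathcal{R}$-bound of $\{is(is+\mathcal{A})^{-1}: s\neq 0\}$. A Neumann series around $\lambda\in i\mathbb{R}$ only controls $\mu$ with $|\mu-\lambda|\le|\lambda|/(2K)$. So it yields $\mathcal{R}$-boundedness on a double cone $\{|\operatorname{Re}\mu|\le c|\operatorname{Im}\mu|\}$ around the imaginary axis. It does not reach deep inside $\operatorname{Re}\mu>0$, e.g.\ the positive real axis, which lies in $\Sigma_{\pi/2+\delta}$. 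Iterating does not close the gap in general, since the $\mathcal{R}$-bound deteriorates at each step while the gained angle shrinks.

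The missing ingredient is a maximum principle for $\mathcal{R}$-bounds. By your Dore step, $\mathcal{A}$ is already sectorial with angle $<\pi/2$. Hence for each $x$ the map $\lambda\mapsto\lambda(\lambda+\mathcal{A})^{-1}x$ is bounded and holomorphic in $\operatorname{Re}\lambda>0$ and continuous up to $i\mathbb{R}\setminus\{0\}$. It therefore equals the Poisson integral of its boundary values:
\[
\lambda(\lambda+\mathcal{A})^{-1}x=\int_{\mathbb{R}}P_\lambda(s)\,is(is+\mathcal{A})^{-1}x\,ds,\qquad
P_\lambda(s)=\frac{1}{\pi}\,\frac{\operatorname{Re}\lambda}{(\operatorname{Re}\lambda)^2+(s-\operatorname{Im}\lambda)^2}.
\]
Integral means against a nonnegative kernel of unit mass lie in the strongly closed convex hull of $\{is(is+\mathcal{A})^{-1}\}$, and that hull has $\mathcal{R}$-bound at most $2K$ (see \cite{KW}). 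This gives $\mathcal{R}$-boundedness on the whole closed right half-plane. Only then does your Neumann-series perturbation push the bound across $i\mathbb{R}$ to a sector $\Sigma_{\pi/2+\delta}$, giving $\varphi_{\mathcal{A}}<\pi/2$.
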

The class $\mathcal{HT}$ means that the Hilbert transform for $X$-valued function is always a bounded operator in $L^2(\mathbb{R},X)$.
 An important example of such an $X$ is $L^q$ space as well as Hilbert spaces.

We first note that the operators $L(\omega_0):=L+\omega_0$ and $L^2(\omega_0)=L^2+\omega_0$ have $L^p$-maximal regularity on $(0,\infty)$ for some $\omega_0>0$.
 This follows from a very general result for elliptic boundary problems studied in \cite[Chapter~6]{PS}.
 Since $-\Delta$ and $\Delta^2$ are strongly elliptic, a key point to check is that whether the boundary condition fulfills the Lopatinskii-Shapiro condition.
 For $-L$ it is easy to check.
 For $L^2$ it can be checked; see e.g.\ \cite{GK}.

By Lemma~\ref{LRMR}, $L(\omega_0)$ and $L^2(\omega_0)$ are $\mathcal{R}$-sectorial with angle less than $\pi/2$.
 In other words
\[
    r_\varphi\left(L(\omega_0)\right)
    :=\mathcal{R}\left(\mathcal{T}_\varphi\left(L(\omega_0)\right)\right)
    \quad\text{and}\quad
    r_\varphi\left(L^2(\omega_0)\right)
\]
are finite for some $\varphi<\pi/2$.
 We may assume that $L(\omega_0)$ and $L^2(\omega_0)$ have the bounded inverse in $L^q(\Omega)$ by taking $\omega_0$ larger.
 We note that
\[
    \mathcal{R}(\mathcal{T}+\mathcal{S})
    \le \mathcal{R}(\mathcal{T})+R(\mathcal{S})
\]
for $\mathcal{T}+\mathcal{S}=\{T+S\mid T\in\mathcal{T},\ S\in\mathcal{S}\}$ \cite[Proposition~4.1.6]{PS}.
 Since $L(\omega_0)\left(\lambda+L(\omega_0)\right)^{-1}=I-\lambda\left(\lambda+L(\omega_0)\right)^{-1}$, $r_\varphi\left(L(\omega_0)\right)<\infty$ implies
\begin{equation} \label{ERBL}
    \tilde{r}_\varphi\left(L(\omega_0)\right)
    :=\mathcal{R}\left(\tilde{\mathcal{T}}_\varphi\left(L(\omega_0)\right)\right)<\infty,
\end{equation}
where
\[
    \tilde{\mathcal{T}}_\varphi\left(L(\omega_0)\right)
    :=\left\{L(\omega_0)\left(\lambda+L(\omega_0)\right)^{-1} \bigm|
    -\lambda\not\in \bar{\Sigma}_\varphi\right\}.
\]
For later use, we observe that
\begin{equation} \label{EDiff}
    r_\varphi\left(dL(\omega_0)\right)
    =r_\varphi\left(L(\omega_0)\right),\quad
    \tilde{r}_\varphi\left(dL(\omega_0)\right)\le1
    +r_\varphi\left(dL(\omega_0)\right)
    =1+r_\varphi\left(L(\omega_0)\right)
\end{equation}
for $d>0$ since $dL(\omega_0)=dL+d\omega_0$.

Let us now consider our operator
\[
    A(U^*)=
    \begin{pmatrix}
    \varepsilon L & -\alpha \\
    -\varepsilon\alpha\beta L 
    & d'+\alpha^2\beta
    \end{pmatrix}L,
\]
where $\alpha=\eta_w^*=\eta_w(w^
*,\theta^*)$, $\beta=\theta^*/\kappa$, $d'=d/(\kappa\gamma)$ with $U^{*T}=(w^
*,\theta^*)$.
 We also set
\[
    A_{\omega_0}(U^*)=
    \begin{pmatrix}
    \varepsilon L^2(\omega_0) & -\alpha L(\omega_0) \\
    -\varepsilon\alpha\beta L^2(\omega_0) 
    & (d'+\alpha^2\beta)L(\omega_0)
    \end{pmatrix}.
\]
This can be rewritten as
\begin{equation} \label{EMS}
    A_{\omega_0}(U^*)=S
    \begin{pmatrix}
    \varepsilon L^2(\omega_0) & 0 \\
    0 & L(\omega_0)
    \end{pmatrix},\quad
    S=
    \begin{pmatrix}
    1 & -\alpha \\
    -\alpha\beta & d'+\alpha^2\beta
    \end{pmatrix}
\end{equation}
or
\begin{equation} \label{ERep}
    A_{\omega_0}(U^*)=A(U^*)+S
    \begin{pmatrix}
    \varepsilon & 0 \\
    0 & 1
    \end{pmatrix}\omega_0.
\end{equation}
This $S$ does not contain any differential operators.
 It is easy to see that $\det S=d'$ so that
\[
    S^{-1}=\frac{1}{d'}
    \begin{pmatrix}
    d'+\alpha^2\beta & \alpha \\
    \alpha\beta & 1
    \end{pmatrix}.
\]
\begin{thm} \label{TMR}
Assume that $1/p+n/(2q)<1$ for $p,q\in(1,\infty)$.
 Assume that $\varphi\in C^3$ so that $\alpha\in C(\bar{\Omega})$ for $U^{*T}=(w^*,\theta^*)\in X_\gamma$ and $\theta^*>0$.
 Then there exists a constant $\delta_0>0$ depending only on $L$ such that if $(\eta_w^*)^2\theta^*\gamma/d\le\delta_0$, then $A_{\omega_0}(U^*)$ is $\mathcal{R}$-sectorial with angle less than $\pi/2$ in $X_0$.
\end{thm}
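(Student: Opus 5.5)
We treat $A_{\omega_0}(U^*)$ as a perturbation of a block-triangular operator with $\mathcal{R}$-sectorial diagonal. Since $d'=d/(\kappa\gamma)$ and $\beta=\theta^*/\kappa$, the hypothesis says that $\alpha^2\beta/d'=(\eta_w^*)^2\theta^*\gamma/d$ is small; the smallness is meant pointwise, uniformly on $\bar\Omega$, since $\alpha=\eta_w(w^*,\theta^*)\in C(\bar\Omega)$. In \eqref{EMS} we therefore split $S=S_0+S_1$ with
\[
S_0=\begin{pmatrix}1&0\\0&d'\end{pmatrix},\qquad S_1=\begin{pmatrix}0&-\alpha\\-\alpha\beta&\alpha^2\beta\end{pmatrix}.
\]
This gives $A_{\omega_0}(U^*)=\mathcal{D}+\mathcal{B}$ with $\mathcal{D}=\operatorname{diag}\left(\varepsilon L^2(\omega_0),\,d'L(\omega_0)\right)$, which is $\mathcal{R}$-sectorial in $X_0$ with angle $\varphi<\pi/2$ by the results recalled before \eqref{ERBL} and by \eqref{EDiff}. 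The perturbation $\mathcal{B}=S_1\operatorname{diag}\left(\varepsilon L^2(\omega_0),L(\omega_0)\right)=S_1S_0^{-1}\mathcal{D}$ consists of multiplication operators by continuous functions composed with $\mathcal{D}$.

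The key steps are as follows. First, write
\[
\lambda+A_{\omega_0}(U^*)=\left(I+\mathcal{B}(\lambda+\mathcal{D})^{-1}\right)(\lambda+\mathcal{D}),
\]
where $\mathcal{B}(\lambda+\mathcal{D})^{-1}=S_1S_0^{-1}\,\mathcal{D}(\lambda+\mathcal{D})^{-1}$. The family $\{\mathcal{D}(\lambda+\mathcal{D})^{-1}\}$ is $\mathcal{R}$-bounded with bound at most $1+\max\{r_\varphi(L^2(\omega_0)),r_\varphi(L(\omega_0))\}=:C_L$, which depends only on $L$, by \eqref{ERBL} and \eqref{EDiff}. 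The constant $d'$ drops out here, and this is exactly why the threshold depends only on $L$.

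Second, bound the multiplication operator $M=S_1S_0^{-1}=\begin{pmatrix}0&-\alpha/d'\\-\alpha\beta&\alpha^2\beta/d'\end{pmatrix}$. A single bounded operator is $\mathcal{R}$-bounded with bound equal to its norm. Its entries are $\alpha/d'$, $\alpha\beta$ and $\alpha^2\beta/d'$, and these need not all be small; only the product $(\alpha/d')(\alpha\beta)$ and the entry $\alpha^2\beta/d'$ are. To fix this, first conjugate by the diagonal scaling $\Lambda=\operatorname{diag}(1,c)$ with $c$ a constant chosen so that $c\,\|\alpha/d'\|_\infty=c^{-1}\|\alpha\beta\|_\infty$. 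Conjugation commutes with $\mathcal{D}$ and preserves $\mathcal{R}$-sectoriality. After this, every entry of $\Lambda M\Lambda^{-1}$ is bounded by $\left(\|\alpha^2\beta/d'\|_\infty\right)^{1/2}$ up to a harmless factor; strictly, one takes sup norms of $\alpha$ and $\beta$ and uses $\sup\alpha^2\beta/d'\le\delta_0$. Hence $\|\Lambda M\Lambda^{-1}\|_{\mathcal{L}(X_0)}\le 2\sqrt{\delta_0}$, up to the $L^q$-norm equivalences for $2\times2$ systems.

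Third, choose $\delta_0$ so that $2\sqrt{\delta_0}\,C_L\le 1/2$. Then the family $\{\Lambda M\Lambda^{-1}\mathcal{D}(\lambda+\mathcal{D})^{-1}\}$ has $\mathcal{R}$-bound at most $1/2$, so the Neumann series for $\left(I+\Lambda M\Lambda^{-1}\mathcal{D}(\lambda+\mathcal{D})^{-1}\right)^{-1}$ converges. Its $\mathcal{R}$-bound is at most $2$, using that $\mathcal{R}$-bounds are subadditive and submultiplicative and that the series converges in the $\mathcal{R}$-bound sense. Fourth, deduce that
\[
\lambda(\lambda+\Lambda A_{\omega_0}\Lambda^{-1})^{-1}=\lambda(\lambda+\mathcal{D})^{-1}\left(I+\cdots\right)^{-1}
\]
is $\mathcal{R}$-bounded for $-\lambda\notin\bar\Sigma_\varphi$, and conjugate back. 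For sectoriality one also needs $A_{\omega_0}(U^*)$ to be closed and to have dense domain $X_1$ and dense range. The domain is clear. The range is all of $X_0$ because $S$ is invertible pointwise ($\det S=d'>0$) and $\mathcal{D}$ is invertible for the chosen $\omega_0$.

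The main obstacle is that $\alpha$ and $\beta$ are functions, not constants. The balancing scaling must then be done with constants, using $\|\alpha\|_\infty$ and $\|\beta\|_\infty$, and the product bound only controls $\sup\alpha^2\beta$. I would handle this by using the sup norms of $\alpha$ and $\beta$ separately and absorbing the loss into the stated smallness condition, read as $\|\alpha\|_\infty^2\|\beta\|_\infty\gamma/d\le\delta_0$, which is consistent with a $\delta_0$ depending only on $L$. If that turns out to be insufficient, the fallback is a direct triangular-block argument: use the factor $(d'+\alpha^2\beta)$ in the $(2,2)$ entry and perturb only the off-diagonal product term.
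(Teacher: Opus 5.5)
Your argument is correct but is organized differently from the paper's. The paper first left-multiplies $\lambda+A_{\omega_0}(U^*)$ by the lower-triangular matrix with off-diagonal entry $\alpha\beta$. This removes $\alpha^2\beta$ from the $(2,2)$ entry and replaces the fourth-order $(2,1)$ entry by the zero-order term $\lambda\alpha\beta$. It then uses the factorization $\lambda+\mathcal{A}=M(\lambda)(\lambda+\mathcal{A}_d)$ from \cite{AH} and inverts the Schur complement $M_1(\lambda)=I+G(\lambda)$ by a Neumann series, where
$G(\lambda)=\alpha L(\omega_0)\bigl(\lambda+d'L(\omega_0)\bigr)^{-1}\lambda\alpha\beta\bigl(\lambda+\varepsilon L^2(\omega_0)\bigr)^{-1}$.
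There $B(\lambda+D)^{-1}$ and $C(\lambda+A)^{-1}$ need only be $\mathcal{R}$-bounded, and only their product must be small; this is how $\alpha\cdot\alpha\beta/d'$ enters.

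You instead keep $A_{\omega_0}(U^*)=(I+M)\mathcal{D}$ and recover the same product structure with the constant similarity $\Lambda=\operatorname{diag}(1,c)$. It commutes with $\mathcal{D}$ and balances the relative bounds $\|\alpha\|_\infty/d'$ and $\|\alpha\beta\|_\infty$, so one Neumann series around $\mathcal{D}$ suffices. This is more elementary: there are no $\lambda$-dependent entries and no Schur complement. It also shows that the plain perturbation argument dismissed in Remark~\ref{RSPer} does work after rescaling. The only cost is a factor $\|\Lambda\|\,\|\Lambda^{-1}\|$ in the final $\mathcal{R}$-bound, which depends on $\alpha,\beta,d'$ but does not affect $\delta_0$. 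The Schur-complement route, in turn, sits in the general block-operator framework of \cite{AH}, which does not require the perturbation to be a bounded multiplication matrix composed with $\mathcal{D}$. A small slip: with $\Lambda M\Lambda^{-1}$ the off-diagonal entries are $-\alpha/(cd')$ and $-c\alpha\beta$, so your balancing equation has $c$ and $c^{-1}$ interchanged. This is harmless.

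Your sup-norm reading of the hypothesis is the right one, and it is also what the paper's proof really uses. In $G(\lambda)$ the multiplications by $\alpha$ and by $\alpha\beta$ are separated by a nonlocal operator, so the bound obtained is in terms of $\|\alpha\|_\infty\|\alpha\beta\|_\infty/d'$. The paper's next step, ``$\le\|\alpha^2\beta\|_\infty$'', is not justified in general for non-constant $\theta^*$; it is fine for constant $\theta^*$, as in the stability application. Neither argument yields a pointwise condition without localization, so drop your opening remark that the smallness is meant pointwise. Also, your balanced $c$ needs only $\|\alpha\|_\infty\|\alpha\beta\|_\infty/d'\le\delta_0$, which also controls the $(2,2)$ entry $\alpha^2\beta/d'$. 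That is exactly the paper's effective condition and slightly weaker than the product-of-sup-norms condition you state.
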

\begin{proof}
Since $L(\omega_0)$ and $L^2(\omega_0)$ have bounded inverse in $L^q(\Omega)$ and since $S^{-1}$ in \eqref{EMS} is bounded in $X_0$, we observe that $R\left(A_{\omega_0}(U^*)\right)=X_0$.
 The density of $D\left(A_{\omega_0}(U^*)\right)$ in $X_0$ is clear since the domains of $L$ and $L^2$ are dense in $L^q(\Omega)$.

We shall use idea of decomposition of block operator matrices in \cite{AH}.
 We essential check the assumption for $M_1^{-1}(\lambda)$ of \cite[Proposition~4.8]{AH}.
 However, it is more instructive to analyze our operator directly from their point of view so we give a full proof.
 We consider an operator matrix
\[
    \mathcal{A}=
    \begin{pmatrix}
    A & B \\
    C & D
    \end{pmatrix}, \quad
    \mathcal{A}_d=
    \begin{pmatrix}
    A & 0 \\
    0 & D
    \end{pmatrix}.
\]
Formally
\begin{equation} \label{EADe}
    \lambda+\mathcal{A}
    =M(\lambda)(\lambda+\mathcal{A}_d)
\end{equation}
with
\[
    M(\lambda)=
    \begin{pmatrix}
    I & B(\lambda+D)^{-1} \\
    C(\lambda+A)^{-1} & I
    \end{pmatrix},
\]
where $I$ denotes the identity operator.
 We further decompose
\begin{equation} \label{EDec}
    M(\lambda)=
    \begin{pmatrix}
    I & B(\lambda+D)^{-1} \\
    0 & I
    \end{pmatrix}
    \begin{pmatrix}
    M_1(\lambda) & 0 \\
    C(\lambda+A)^{-1} & I
    \end{pmatrix}
\end{equation}
with
\[
    M_1(\lambda)
    =I-B(\lambda+D)^{-1}C(\lambda+A)^{-1}.
\]

We now consider $\lambda+A_{\omega_0}(U^*)$.
 Since
\begin{align*}
    \lambda+ A_{\omega_0}(U^*)&=
    \begin{pmatrix}
    \lambda+\varepsilon L^2(\omega_0) & -\alpha L(\omega_0) \\
    -\alpha\beta\varepsilon L^2(\omega_0) 
    & \lambda+(d'+\alpha^2\beta)L(\omega_0)
    \end{pmatrix} \\
    &= \begin{pmatrix}
    1 & 0 \\
    -\alpha\beta 
    & 1
    \end{pmatrix}
    \begin{pmatrix}
    \lambda+\varepsilon L^2(\omega_0) & -\alpha L(\omega_0) \\
    \lambda\alpha\beta 
    & \lambda+d'L(\omega_0)
    \end{pmatrix}
\end{align*}
so that
\[
    \left(\lambda+A_{\omega_0}(U^*)\right)^{-1}
    =(\lambda+\mathcal{A})^{-1}
    \begin{pmatrix}
    1 & 0 \\
    \alpha\beta & 1
    \end{pmatrix}, \quad
    \mathcal{A}=\begin{pmatrix}
    \varepsilon L^2(\omega_0) & -\alpha L(\omega_0) \\
    \lambda\alpha\beta & d'L(\omega_0),
    \end{pmatrix}
\]
where we invoke $\begin{psmallmatrix}
1 & 0\\-\alpha\beta & 1\end{psmallmatrix}^{-1}=\begin{psmallmatrix}1 & 0\\\alpha\beta & 1\end{psmallmatrix}$.
 Since
\[
    \mathcal{R}(\mathcal{TS})\le\mathcal{R}(\mathcal{T})\mathcal{R}(\mathcal{S})
    \quad\text{for}\quad
    \mathcal{TS}=\left\{ TS \mid T\in\mathcal{T},\ S\in\mathcal{S}\right\}
\]
by \cite[Proposition~4.1.6]{PS} and multiplication by $\begin{psmallmatrix}1 & 0\\ \alpha\beta & 1\end{psmallmatrix}$ is bounded in $X_0$, we observe
\[
    \mathcal{R}\left(\mathcal{T}_\varphi\left(A_{\omega_0}(U^*)\right)\right)
    \le C\mathcal{R}\left(\mathcal{T}_\varphi(\mathcal{A})\right)
\]
with $C$ depending only on $\alpha\beta$.
 (Here, we take operator norm of matrices acting on $\mathbb{R}^2$.)
 By \eqref{EADe} we observe that
\[
    (\lambda+\mathcal{A})^{-1}=(\lambda+\mathcal{A}_d)^{-1}M(\lambda)^{-1}
\]
with $A=\varepsilon L^2(\omega_0)$, $B=-\alpha L(\omega_0)$, $C=\lambda\alpha\beta$, $D=d'L(\omega_0)$.
 By \cite[Proposition~4.1.6]{PS} we see
\[
    \mathcal{R}\left(\mathcal{T}_\varphi(\mathcal{A})\right)
    \le \mathcal{R}
    \left(\mathcal{T}_\varphi(\mathcal{A}_d)\right)\mathcal{R}(\mathcal{M}_\varphi^{-1})
\]
with
\[
    \mathcal{M}_\varphi^{-1}
    =\left\{M(\lambda)^{-1}
    \bigm| -\lambda\not\in\bar{\Sigma}_\varphi \right\}.
\]
Since $r_\varphi\left(d'L(\omega_0)\right)=r_\varphi\left(L(\omega_0)\right)$ and $r_\varphi\left(\varepsilon L^2(\omega_0)\right)=r_\varphi\left(L^2(\omega_0)\right)$ are finite, we obtain that $\mathcal{R}\left(\mathcal{T}_\varphi(\mathcal{A}_d)\right)$ is finite.

It remains to prove that $\mathcal{R}(\mathcal{M}_\varphi^{-1})<\infty$.
 In the decomposition \eqref{EDec}, $\mathcal{R}$-boundedness of the family
\[
    B(\lambda+D)^{-1}=-\alpha L(\omega_0)\left(\lambda+d'L(\omega_0)\right)^{-1}, \quad
    -\lambda\not\in\bar{\Sigma}_\varphi
\]
follows from $\tilde{r}_\varphi(d'L)<\infty$ and boundedness of $\alpha$.
 Similarly, $\mathcal{R}$-boundedness of the family
\[
    C(\lambda+A)^{-1}=\lambda\alpha\beta \left(\lambda+\varepsilon L^2(\omega_0)\right)^{-1}, \quad
    -\lambda\not\in\bar{\Sigma}_\varphi
\]
follows from $\tilde{r}_\varphi\left(\varepsilon L^2(\omega_0)\right)<\infty$.
 From these observations, to estimate $\mathcal{R}(\mathcal{M}_\varphi^{-1})$ it suffices to prove $M_1(\lambda)^{-1}$ exists and
\[
    \mathcal{R}(\mathcal{M}_{1\varphi}^{-1})<\infty
    \quad\text{for}\quad
    \mathcal{M}_{1\varphi}^{-1}=\left\{M_1(\lambda)^{-1}
    \bigm| -\lambda\not\in\bar{\Sigma}_\varphi \right\}
\]
since \eqref{EDec} yields a decomposition of
\begin{align*}
    M(\lambda)^{-1}&=
    \begin{pmatrix}
    M_1(\lambda) & 0 \\
    C(\lambda+A)^{-1} & I
    \end{pmatrix}^{-1}
    \begin{pmatrix}
    I & -B(\lambda+D)^{-1} \\
    0 & I
    \end{pmatrix},\\
    \begin{pmatrix}
    M_1(\lambda) & 0 \\
    C(\lambda+A)^{-1} & I
    \end{pmatrix}^{-1}
    &=\begin{pmatrix}
    M_1(\lambda)^{-1} & 0 \\
    -C(\lambda+A)^{-1} M_1(\lambda)^{-1} & I
    \end{pmatrix}.
\end{align*}
By definition,
\[
    M_1(\lambda)=I+\alpha L(\omega_0)\left(\lambda+d'L(\omega_0)\right)^{-1}
    \lambda\alpha\beta
   \left(\lambda+\varepsilon L^2(\omega_0)\right)^{-1}
   =I+G(\lambda).
\]
By \eqref{EDiff} and \cite[Proposition~4.1.6]{PS} we see that
\begin{align*}
    \mathcal{R}(\mathcal{G}_\varphi)
    &\le \|\alpha\|_\infty\|\alpha\beta\|_\infty\tilde{r}_\varphi\left(d'L(\omega_0)\right)r_\varphi\left(\varepsilon L^2(\omega_0)\right)/d' \\
    &\le \|\alpha^2\beta\|_\infty\left(1+r_\varphi\left(d'L(\omega_0)\right)\right)
    r_\varphi\left(L^2(\omega_0)\right)/d', \quad
    \mathcal{G}_\varphi=\left\{G(\lambda)\Bigm|-\lambda\not\in\bar{\Sigma}_\varphi\right\}.
\end{align*}
If we take $\delta_0=1/\left(2\left(1+r_\varphi\left(L(\omega_0)\right)r_\varphi\left(L^2(\omega_0)\right)\right)\right)$,
then $\mathcal{R}(\mathcal{G}_\varphi)\le 1/2$ provided that $\|\alpha^2\beta\|_\infty/d'\le\delta_0$.
 The inverse $M_1(\lambda)^{-1}$ is now well-defined as the Neumann series
\[
    M_1(\lambda)^{-1}=\sum_{j=0}^\infty\left(-G(\lambda)\right)^j.
\]
Since $\mathcal{R}(\mathcal{G}_\varphi)\le1/2$, we obtain
\[
    \mathcal{R}(\mathcal{M}_{1\varphi}^{-1})
    \le 1+\sum_{j=1}^\infty\mathcal{R}(-\mathcal{G}_\varphi)^j
    \le 1+\sum_{j=1}^\infty\left(\frac12\right)^j
    =2<\infty.
\]
We thus conclude that $\mathcal{R}(\mathcal{M}_\varphi^{-1})<\infty$ if $|\alpha^2\beta|\le\delta_0d'$.
 Since $\alpha=\eta_w^*$, $\beta=\theta^*/\kappa$ and $d'=d/(\gamma\kappa)$, the last condition can be rewritten as
\[
    (\eta_w^*)^2\theta^*\gamma/d\le\delta_0.
\]
We then conclude that $A_{\omega_0}(U^*)$ is $\mathcal{R}$-sectorial with angle $\varphi<\pi/2$.
\end{proof}
\begin{cor} \label{CMR}
Under the same hypotheses of Theorem~\ref{TMR}, $A(U^*)+\omega$ is $\mathcal{R}$-sectorial with angle $\varphi<\pi/2$ for some $\omega>0$.
\end{cor}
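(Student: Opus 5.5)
Our plan is to pass from $A_{\omega_0}(U^*)$ to $A(U^*)+\omega$ by a bounded perturbation argument. By \eqref{ERep},
\[
    A(U^*)+\omega = A_{\omega_0}(U^*) - S\begin{pmatrix}\varepsilon & 0\\ 0 & 1\end{pmatrix}\omega_0 + \omega ,
\]
and the middle term $P:=S\,\mathrm{diag}(\varepsilon,1)\omega_0$ is a bounded multiplication operator on $X_0$, since $\alpha,\beta\in C(\bar\Omega)$. We therefore regard $A(U^*)+\omega$ as $\mathcal{B}+(\omega - \omega_1)$, where $\mathcal{B}:=A_{\omega_0}(U^*)+\omega_1-P$ for suitable $\omega_1\ge0$. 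The domains agree, $D(A(U^*))=D(A_{\omega_0}(U^*))=X_1$, so density carries over from the first paragraph of the proof of Theorem~\ref{TMR}.

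\textbf{Step 1.} By Theorem~\ref{TMR}, $\mathcal{R}(\mathcal{T}_\varphi(A_{\omega_0}(U^*)))=:K<\infty$ for some $\varphi<\pi/2$. Shifting by $\omega_1>0$ preserves $\mathcal{R}$-sectoriality with the same angle and bound comparable to $K$: write $\lambda(\lambda+\omega_1+A_{\omega_0})^{-1}=\frac{\lambda}{\lambda+\omega_1}(\lambda+\omega_1)(\lambda+\omega_1+A_{\omega_0})^{-1}$. For $-\lambda\notin\bar\Sigma_\varphi$ the point $\lambda+\omega_1$ also lies in the admissible region, and the scalar factor $\lambda/(\lambda+\omega_1)$ is bounded uniformly there. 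Kahane's contraction principle then gives the $\mathcal{R}$-bound.

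\textbf{Step 2.} This is the key step: absorb $P$ by a Neumann series,
\[
    (\lambda+\mathcal{B})^{-1}=(\lambda+\omega_1+A_{\omega_0})^{-1}\bigl(I-P(\lambda+\omega_1+A_{\omega_0})^{-1}\bigr)^{-1}.
\]
We need
\[
    \mathcal{R}\bigl\{P(\lambda+\omega_1+A_{\omega_0})^{-1}\bigr\}\le \|P\|\,\mathcal{R}\Bigl\{\frac{1}{\lambda+\omega_1}\,(\lambda+\omega_1)(\lambda+\omega_1+A_{\omega_0})^{-1}\Bigr\}\le \|P\|\,CK\sup|\lambda+\omega_1|^{-1}.
\]
Here we use $|\lambda+\omega_1|\ge c\,\omega_1$ for $-\lambda\notin\bar\Sigma_\varphi$ with $\varphi<\pi/2$. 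Choosing $\omega_1$ large makes this bound at most $1/2$. The Neumann series then has $\mathcal{R}$-bound at most $2$, and multiplying by $\lambda(\lambda+\omega_1+A_{\omega_0})^{-1}$ yields $\mathcal{R}(\mathcal{T}_\varphi(\mathcal{B}))<\infty$.

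\textbf{Step 3.} Set $\omega=\omega_1$. Then $A(U^*)+\omega=\mathcal{B}$, which is $\mathcal{R}$-sectorial with angle at most $\varphi<\pi/2$. Its range is all of $X_0$ because $0\in\rho(\mathcal{B})$: apply the same Neumann series at $\lambda=0$, with $\omega_1$ large. We expect Step 2 to be the main point needing care, specifically the uniform lower bound on $|\lambda+\omega_1|$ over the sector complement and the use of $\mathcal{R}$-bound products; these are standard facts (cf.\ \cite[Proposition~4.1.6]{PS}).
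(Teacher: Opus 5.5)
Your proposal is correct and follows the paper's route. By \eqref{ERep}, $A(U^*)$ and $A_{\omega_0}(U^*)$ differ only by the bounded multiplication operator $\omega_0 S\,\mathrm{diag}(\varepsilon,1)$; where the paper cites the perturbation result \cite[Proposition~4.4.3]{PS}, you prove this bounded-perturbation case directly, using the shifted resolvent and a Neumann series with $\omega$ large. One small correction: the bound $|\lambda+\omega_1|\ge c\,\omega_1$ on $\{-\lambda\notin\bar{\Sigma}_\varphi\}$ actually needs $\varphi>0$, not $\varphi<\pi/2$. This is harmless, since the angle can always be taken positive.
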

\begin{proof}
By \eqref{ERep}, $A_{\omega_0}(U^*)$ and $A(U^*)$ are the same up to lower order terms.
 Since $A_{\omega_0}(U^*)$ is $\mathcal{R}$-sectorial with angle $\varphi<\pi/2$ by Theorem~\ref{TMR}, applying \cite[Proposition~4.4.3]{PS} yields the desired result.
\end{proof}
\begin{rem} \label{RSPer}
If one applies a general perturbation result, one has to assume that off diagonal parts are small.
 In this case, one should assume that both $\alpha$ and $\alpha\beta$ are small which is evidently stronger than the smallness of $\alpha^2\beta$ for fixed $\varepsilon$ and $d'$.
\end{rem}
Since our $X_0$ is of course of class $\mathcal{HT}$, Corollary~\ref{CMR} together with Lemma~\ref{LRMR} yields maximal regularity.
\begin{cor} \label{CMR1}
Under the same hypotheses of Theorem~\ref{TMR}, the operator $A(U^*)+\omega$ has $L^h$-maximal regularity ($1<h<\infty$) on $(0,\infty)$.
 In particular, $A(U^*)$ has $L^h$-maximal regularity on $(0,T)$ for every $T\in(0,\infty)$.
\end{cor}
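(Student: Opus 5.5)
The first assertion follows from Corollary~\ref{CMR} combined with Lemma~\ref{LRMR}. The second is obtained by a standard shift argument that removes the $\omega$ on finite intervals.

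\emph{Step 1.} By Corollary~\ref{CMR}, $A(U^*)+\omega$ is $\mathcal{R}$-sectorial in $X_0=L^q(\Omega)\times L^q(\Omega)$ with angle $\varphi<\pi/2$. Since $X_0$ is a product of $L^q$ spaces with $1<q<\infty$, it is of class $\mathcal{HT}$. Its domain $X_1=D(L^2)\times D(L)$ is dense, and its range is all of $X_0$ by the proof of Theorem~\ref{TMR} (invertibility of $L(\omega_0)$, $L^2(\omega_0)$ and $S$, which stays valid after the lower order shift once $\omega$ is large). Lemma~\ref{LRMR}, applied with exponent $h$ in place of $p$, therefore gives $L^h$-maximal regularity of $A(U^*)+\omega$ on $(0,\infty)$ for every $h\in(1,\infty)$. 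The characterization in Lemma~\ref{LRMR} does not depend on the exponent, because $\mathcal{R}$-sectoriality is an $h$-independent notion.

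\emph{Step 2.} Fix $T<\infty$ and consider $v'+A(U^*)v=g$ on $(0,T)$ with $v(0)=v_0$. Substitute $v(t)=e^{\omega t}z(t)$. Then
\[
z'+(A(U^*)+\omega)z=e^{-\omega t}g, \qquad z(0)=v_0 .
\]
Extend $e^{-\omega t}g$ by zero to $(0,\infty)$ and apply Step~1 to obtain $z\in W^{1,h}(0,\infty;X_0)\cap L^h(0,\infty;X_1)$. Multiplication by $e^{\pm\omega t}$ is an isomorphism of $\mathbb{E}_0$ and $\mathbb{E}_1$ on the bounded interval $(0,T)$, with constants depending on $\omega T$. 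Hence $v$ is a solution in $\mathbb{E}_1$ on $(0,T)$, with the a priori estimate inherited from the one for $z$. Uniqueness follows the same way: the difference of two solutions transforms into a solution with zero data of the shifted problem on $(0,T)$, which vanishes by the maximal regularity on $(0,\infty)$ (extend the data by zero and use uniqueness there).

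\emph{Main obstacle.} The only delicate point is the initial data. Maximal regularity in the isomorphism sense of Proposition~\ref{PPSL} requires the data $v_0$ to lie in the trace space $X_\gamma=(X_0,X_1)_{1-1/h,h}$. For $v_0\neq0$ I would reduce to zero initial data by subtracting $e^{-t(A(U^*)+\omega)}v_0$. This function belongs to $\mathbb{E}_1$ on $(0,\infty)$ precisely when $v_0\in X_\gamma$, by the standard trace characterization for generators of bounded analytic semigroups with maximal regularity (\cite[Chapter~3]{PS}). Analyticity of the semigroup is available here, since $\mathcal{R}$-sectoriality with angle less than $\pi/2$ implies sectoriality with the same angle.
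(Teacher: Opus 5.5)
Your proposal is correct and follows the paper's argument: Corollary~\ref{CMR} with Lemma~\ref{LRMR} for the first claim, and the substitution $v=e^{-\omega t}U$, which reduces the problem on $(0,T)$ to the shifted one on $(0,\infty)$, for the second. Your extra care with nonzero initial data, via the trace characterization of $X_\gamma$ for the analytic semigroup, and with uniqueness (cleanest via that same semigroup) fills in details the paper leaves implicit.
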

\begin{proof}
It turns out that the last two statements are equivalent as far as $X_0$ is of class $\mathcal{HT}$ as stated for example in \cite[p.~77]{KW}.

We give here a direct proof that maximal regularity of $A(U^*)+\omega$ on $(0,\infty)$ implies the maximal regularity of $A(U^*)$ on $(0,T)$, $T<\infty$.
 Consider
\begin{equation} \label{EEOr}
    U_t+A(U^*)U=f,\quad
    \left.U\right|_{t=0}=u_0.
\end{equation}
The function $v=e^{-\omega t}U$ solves
\begin{equation} \label{EESh}
    v_t+\left(A(U^*)+\omega\right)v=e^{-\omega t}f,\quad
    \left.v\right|_{t=0}=u_0.
\end{equation}
Thus the solvability for \eqref{EEOr} on $(0,T)$ is reduced to the solvability of \eqref{EESh} on $(0,\infty)$ so the proof is now complete.
\end{proof}

By Proposition~\ref{PAR} and Corollary~\ref{CMR1}, we are able to apply Proposition~\ref{PPSL} to get a local existence theorem.
 Let us recall function spaces.
 The base space is $X_0=L^q(\Omega)\times L^q(\Omega)$ and $\mathbb{E}_0=L^p(0,T;X_0)$.
 The solution space is $X_1=D(L^2)\times D(L)$ and
\[
    \mathbb{E}_1=W^{1,p}(0,T;X_0)\cap L^p(0,T;X_1).
\]
The trace space is $X_\gamma=(X_0,X_1)_{1-1/p,p}$.
\begin{thm} \label{TLW}
Assume that $2/p+n/q<1$ and $4/p+n/(2q)<1$ for $p,q\in(1,\infty)$.
 Assume that $\psi_0$ is $C^4$ in its variables $(w,\theta)$.
 Assume that $\psi$ is given as \eqref{EHEx} with constant $\varepsilon>0$.
 Assume (CH1).
 Let $\Omega$ be a smooth bounded domain or $\Omega=\mathbb{T}^n$.
 For $(w_0,\theta_0)\in X_\gamma$ assume that
\begin{equation} \label{ESmall}
    \eta_w(w_0,\theta_0)^2 \theta_0\gamma/d\le\delta_0,
\end{equation}
where $\delta_0$ is a constant in Theorem~\ref{TMR}.
 Then there exists $T(w_0,\theta_0)>0$ and $\delta=\delta(w_0,\theta_0)>0$ such that \eqref{ECH1} (with \eqref{EBT1} if $\partial\Omega\neq\emptyset$) admits a unique solution $(w,\theta)\in\mathbb{E}_1\cap C\left ([0,T],X_\gamma\right)$ on $[0,T]$ with initial data $(w_1,\theta_1)\in X_\gamma$ satisfying $\left\|(w_1,\theta_1)-(w_0,\theta_0)\right\|_{X_\gamma}\le\delta$.
 The mapping $(w_1,\theta_1)\mapsto(w,\theta)\in\mathbb{E}_1$ is locally Lipschitz from $X_\gamma$ to $\mathbb{E}_1\cap C\left([0,T],X_\gamma\right)$.
\end{thm}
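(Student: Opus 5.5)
The plan is to apply Proposition~\ref{PPSL} with $v=U=(w,\theta)^T$, $\mathcal{A}=A$ from \eqref{EOpA}, $\mathcal{F}=F$ from \eqref{EOpF1}, \eqref{EOpF2}, and $v_0=U_0=(w_0,\theta_0)^T$. After the time rescaling $\tau=\gamma t$ we may take $\gamma=1$ and replace $d$ by $d/\gamma$; the smallness condition \eqref{ESmall} is invariant under this rescaling. First I fix the open set $V\subset X_\gamma$. Since $2/p+n/q<1$, the embedding \eqref{ESo} gives $X_\gamma\subset C(\bar\Omega)\times C^1(\bar\Omega)$. I therefore take $V=\{(w,\theta)\in X_\gamma\mid \min_{\bar\Omega}\theta>0\}$, which is open in $X_\gamma$ and keeps $\theta\eta_w^2$ and $F$ meaningful. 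The physically relevant data satisfy $\theta_0>0$, so $U_0\in V$. I also note that the Neumann conditions \eqref{EBT1} are built into $X_1=D(L^2)\times D(L)$, as explained after \eqref{EOpF2}, so a solution of \eqref{EAb2} in $\mathbb{E}_1$ is a solution of \eqref{ECH1} with \eqref{EBT1}.

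Next I check the Lipschitz hypothesis of Proposition~\ref{PPSL}. The exponent conditions $2/p+n/q<1$ and $4/p+n/(2q)<1$ imply $1/p+n/(2q)<1$. Hence Proposition~\ref{PAR} applies: $A\in C^1(X_\gamma,\mathcal{L}(X_1,X_0))$ and $F\in C^1(X_\gamma,X_0)$, using that $\psi_0\in C^4$. A $C^1$ map is locally Lipschitz, so after shrinking $V$ to a small ball around $U_0$ (convex, with closure in the original open set) the map $(A,F)$ is Lipschitz on $V$. This is harmless because the conclusion of Proposition~\ref{PPSL} is local near $v_0$ anyway.

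The remaining hypothesis is maximal regularity of $A(U_0)$ on $(0,T)$. Here I apply Theorem~\ref{TMR} with $U^*=U_0$: assumption \eqref{ESmall} is exactly $(\eta_w^*)^2\theta^*\gamma/d\le\delta_0$, with $\alpha=\eta_w(w_0,\theta_0)\in C(\bar\Omega)$ because $\psi_0\in C^3$ and $U_0\in X_\gamma$. Corollary~\ref{CMR} and Corollary~\ref{CMR1} then give $L^p$-maximal regularity of $A(U_0)$ on $(0,T)$ for every finite $T$. This is the isomorphism $\mathbb{E}_1\to\mathbb{E}_0\times X_\gamma$ required in Proposition~\ref{PPSL}; the initial-trace part follows from the standard trace identification of $X_\gamma$ for $\mathbb{E}_1$. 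Proposition~\ref{PPSL} then yields $T_0$, $\delta$, existence, uniqueness and local Lipschitz dependence, which is the statement with $T=T_0$.

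The main obstacle is making the smallness condition \eqref{ESmall} precise when $\eta_w(w_0,\theta_0)$ and $\theta_0$ are functions. The proof of Theorem~\ref{TMR} uses $\|\alpha^2\beta\|_\infty$, so \eqref{ESmall} must be read pointwise uniformly, i.e.\ as a bound on the sup norm over $\bar\Omega$. A second subtlety is that Theorem~\ref{TMR} treats $\alpha,\beta$ as multiplication operators: commuting them past $L(\omega_0)$ in the factorization \eqref{EMS} is only exact for constants. For variable coefficients I would freeze them, treat the commutators as lower-order perturbations, and absorb these through \cite[Proposition~4.4.3]{PS}, as in Corollary~\ref{CMR}, possibly enlarging $\omega$. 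That is the step I would scrutinize most carefully.
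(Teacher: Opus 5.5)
Your proposal is correct and follows the paper's argument exactly. Proposition~\ref{PAR} gives the $C^1$ (hence local Lipschitz) property of $(A,F)$, Theorem~\ref{TMR} with Corollaries~\ref{CMR} and~\ref{CMR1} gives maximal regularity of $A(U_0)$ under \eqref{ESmall}, and Proposition~\ref{PPSL} then yields existence, uniqueness and Lipschitz dependence. The commutator issue you flag does not arise: in $A(U^*)$ the coefficients $\alpha$, $\alpha\beta$, $\alpha^2\beta$ act as multiplications to the left of $L$ and $L^2$, so \eqref{EMS} and the triangular factorization in the proof of Theorem~\ref{TMR} are exact for variable coefficients. Variability enters only in the $\mathcal{R}$-bound of $G(\lambda)$, where the factor $\alpha\beta$ sits to the right of $L(\omega_0)\left(\lambda+d'L(\omega_0)\right)^{-1}$ and cannot be combined with the leading $\alpha$. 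The condition actually used is therefore $\|\alpha\|_\infty\|\alpha\beta\|_\infty\le\delta_0 d'$, which can be stronger than a sup bound on $\alpha^2\beta$ when $\theta_0$ varies.
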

\begin{rem} \label{RProS}
\begin{enumerate}
    \item[(i)] (conservation of total fraction). 
    By \eqref{EBT1} and the first equation of \eqref{ECH1}
    \[
    \frac{d}{dt}\int_\Omega w\,dx=-\int_\Omega\operatorname{div}j\,dx
    =\int_{\partial\Omega}\gamma\nabla\mu\cdot\nu\,d\mathcal{H}^{n-1}=0.
    \]
    \item[(i\hspace{-0.1em}i)] (positivity of temperature). 
    Since $\theta$ solves
    \[
    \kappa\theta_t=\operatorname{div}(d\nabla\theta)+\gamma|\nabla\mu|^2+\theta h(x,t)
    \]
    with some bounded continuous function $h$ by the choice of our exponent $p,q$, the maximum principle (cf.\ \cite{PW}) implies that $\theta>0$ if $\theta_0>0$.
    \item[(i\hspace{-0.1em}i\hspace{-0.1em}i)] (conservation of total energy). 
    Since our solution $(w,\theta)$ satisfies \eqref{EEC1}, we see that
    \begin{align*}
    \frac{d}{dt} \int_\Omega \epsilon\,dx
    &=\int_\Omega\operatorname{div}j_e\,dx
    =\int_{\partial\Omega}j_e\cdot\nu\,d\mathcal{H}^{n-1}\\
    &=\int_{\partial\Omega}\left(d\nabla\theta+\gamma\mu\nabla\mu+\varepsilon(\operatorname{div}j)\nabla w\right)
    \cdot\nu\,d\mathcal{H}^{n-1}=0
    \end{align*}
    by \eqref{EBT1}.
     This property holds for any types provided that $j_e\cdot\nu=0$ on $\partial\Omega$.
\end{enumerate}
\end{rem}

\section{Stability of equilibrium} \label{SS}

We recall a generalized stability principle \cite[Theorem~5.3.1]{PS}.
 We consider \eqref{EAb2} with \eqref{EAF}.
 Let $v_*$ denote an equilibrium of \eqref{EAb2}, i.e.,
\[
    \mathcal{A}(v_*)v_*=\mathcal{F}(v_*), \quad
    v_*\in V\cap X_1.
\]
We recall the notion of (linearly) normal stability.
 Let $\mathcal{E}$ be the set of all equilibria in $V\cap X_1$.
 We consider the linearized operator of $\mathcal{A}(v)v-\mathcal{F}(v)$ at $v=v_*$.
 We assume that $(\mathcal{A},\mathcal{F})$ given by \eqref{EAF} is $C^1$.
 Then the linearized operator $\mathcal{A}_0$ can be explicitly defined by
\[
    \mathcal{A}_0w=\mathcal{A}(v_*)w
    +\left(\mathcal{A}'(v_*)w\right) v_*
    -\mathcal{F}'(v_*)w, \quad
    w\in X_1.
\]
\begin{definition} \label{DNS}
We say that an equilibrium $v_*\in\mathcal{E}$ is (linearly) normally stable if the following four properties hold.
\begin{description}
    \item[\it (S1)] Near $v_*$, the set $\mathcal{E}$ is $C^1$ manifold of $X_1$ of dimension $m=1,2,\ldots$.
    \item[\it (S2)] The tangent space for $\mathcal{E}$ at $v_*$ is isomorphic to $\ker\mathcal{A}_0$.
    \item[\it (S3)] Zero is a semi-simple eigenvalue of $\mathcal{A}_0$.
    \item[\it (S4)] There is $\omega>0$ such that $\operatorname{Re}z>\omega$ if $z$ is a non-zero spectrum of $\mathcal{A}_0$.
\end{description}
In the case $m=0$ in (S1), i.e., $v_*$ is isolated, we simply say $v_*$ is stable if (S4) holds and $0$ is not a spectrum of $\mathcal{A}_0$.
\end{definition}
We state a generalized stability principle \cite[Theorem~5.3.1]{PS} for \eqref{EAb2} in a slightly different form.
\begin{prop} \label{PGST}
Let $X$ be a Banach space of class $\mathcal{HT}$.
 Assume that $(\mathcal{A},\mathcal{F})$ in \eqref{EAF} is $C^1$.
 Let $v_*$ be an equilibrium of \eqref{EAb2}.
 For $p\in(1,\infty)$ assume that $\mathcal{A}_0+\omega$ has $L^p$ maximal regularity $(0,\infty)$ for some $\omega>0$.
 If $v_*$ is linearly normally stable, then $v_*$ is nonlinearly stable in the sense that there is $\delta>0$ such that if $\|v_0-v_*\|_{X_\gamma}<\delta$, then there exists a unique global-in-time solution $v$ of \eqref{EAb2} with $v(0)=v_0$ and moreover $v(t)$ converges to some $v_{**}\in\mathcal{E}$ as $t\to\infty$ in $X_\gamma$ at exponential rate, i.e., $\left\|v(t)-v_{**}\right\|_{X_\gamma}\le c_0e^{-\lambda_0t}$ with some positive constants $c_0$ and $\lambda_0$.
 (Note that $v_{**}$ may not be $v_*$.)
\end{prop}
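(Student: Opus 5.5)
Proposition~\ref{PGST} is essentially \cite[Theorem~5.3.1]{PS} with its hypotheses repackaged, so the plan is to check that the setting here matches theirs rather than to reprove the stability principle.

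First, the structural hypotheses. Pr\"uss--Simonett assume that $(\mathcal{A},\mathcal{F})\in C^1(V,\mathcal{L}(X_1,X_0)\times X_0)$, that $v_*\in V\cap X_1$ is an equilibrium, and that the full linearization $\mathcal{A}_0$ has $L^p$-maximal regularity. In our formulation maximal regularity is assumed for $\mathcal{A}_0+\omega$ on $(0,\infty)$. By the argument of Corollary~\ref{CMR1}, this gives maximal regularity of $\mathcal{A}_0$ on every finite interval $(0,T)$. Since $X_0$ is of class $\mathcal{HT}$, Lemma~\ref{LRMR} and \cite[Proposition~4.4.3]{PS} show that $\mathcal{A}_0$ is $\mathcal{R}$-sectorial up to a shift, and this is the form used in \cite{PS}.

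Second, the spectral hypotheses. Conditions (S1)--(S4) are the definition of normal stability in \cite{PS}. The one point to check is that maximal regularity of $\mathcal{A}_0+\omega$ combined with (S4) gives the spectral decomposition $X_0=N(\mathcal{A}_0)\oplus R(\mathcal{A}_0)$, by semi-simplicity (S3). It also gives exponential decay of the semigroup on the range part, because $\sigma(\mathcal{A}_0)\setminus\{0\}\subset\{\operatorname{Re}z>\omega\}$ and an $\mathcal{R}$-sectorial operator generates an analytic semigroup. The main technical ingredient, handled in \cite{PS}, is the following. One parametrizes $\mathcal{E}$ near $v_*$ over $N(\mathcal{A}_0)$ using (S1)--(S2). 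One then writes the solution as a normal-form system: a finite-dimensional ODE for the projection onto the kernel, coupled to a maximal-regularity problem on $R(\mathcal{A}_0)$. A fixed-point argument in exponentially weighted spaces $e^{-\lambda_0 t}\mathbb{E}_1$ then closes.

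Third, the conclusion. The global solution and the exponential convergence to some $v_{**}\in\mathcal{E}$ in $X_\gamma$ are exactly what that theorem states. For the isolated case $m=0$, $0\in\rho(\mathcal{A}_0)$ and (S4) make the whole spectrum lie in $\{\operatorname{Re}z>\omega\}$. The principle of linearized stability \cite[Theorem~5.1.1 and Section~5.1]{PS} then yields convergence to $v_{**}=v_*$.

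The step I expect to be delicate is the bookkeeping that turns ``$\mathcal{A}_0+\omega$ has maximal regularity on $(0,\infty)$'' into the hypothesis \cite{PS} actually uses, together with the uniqueness claim on the whole half-line. For uniqueness I would use the local uniqueness of Proposition~\ref{PPSL}, applied iteratively.
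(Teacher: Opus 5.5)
Your strategy matches the paper's. Proposition~\ref{PGST} is \cite[Theorem~5.3.1]{PS} with a modified linear hypothesis, so the only thing needing an argument is that ``$\mathcal{A}_0+\omega$ has maximal regularity on $(0,\infty)$ in an $\mathcal{HT}$ space'' still provides what the PS proof consumes. You flag exactly this step as delicate and leave it open, and what you do say about it does not close it.

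Your hypothesis check gives maximal regularity of $\mathcal{A}_0$ on finite intervals and calls this ``the form used in \cite{PS}''. As the paper records, however, PS assume maximal regularity of $\mathcal{A}(v_*)$ on $(0,\infty)$, which is a different operator on a different interval. The difference matters. Since $\dim\ker\mathcal{A}_0=m\ge1$ by (S1)--(S2), the stationary solution $v\equiv k\in\ker\mathcal{A}_0$ shows that $\mathcal{A}_0$ never has maximal regularity on $(0,\infty)$ in the sense of Proposition~\ref{PPSL}. In the Cahn--Hilliard application, $\mathcal{A}(v_*)$ also has the constants in its kernel.

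What the exponentially weighted fixed point in the PS proof actually uses is a property of the part $\mathcal{A}_{0s}$ of $\mathcal{A}_0$ in $X_s=R(\mathcal{A}_0)$: $\mathcal{A}_{0s}-\omega_1$ must have maximal $L^p$-regularity on $(0,\infty)$ for some $\omega_1>0$. What you derive instead is exponential decay of $e^{-t\mathcal{A}_{0s}}$, which is weaker. It bounds the mild solution $z$ in weighted $L^p(\mathbb{R}_+;X_0)$ but gives no control of $\dot z$ or $\mathcal{A}_{0s}z$ there, so it cannot close a fixed point in $e^{-\omega_1 t}\mathbb{E}_1$.

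The paper supplies exactly this input, as follows.
By (S3)--(S4), the Riesz projection splits $X_0=X_c\oplus X_s$ with $X_c=\ker\mathcal{A}_0$. The space $X_s$ is again $\mathcal{HT}$, and $\mathcal{A}_{0s}+\omega$ inherits maximal regularity on $(0,\infty)$, so it is $\mathcal{R}$-sectorial with angle $<\pi/2$ by Lemma~\ref{LRMR}. Let $\omega_4$ be the constant in (S4). Then $\sigma(\mathcal{A}_{0s})=\sigma(\mathcal{A}_0)\setminus\{0\}$ lies in that sector and in $\{\operatorname{Re}z>\omega_4\}$. Hence, for $0<\omega_1<\omega_4$, the family $\{\lambda(\lambda+\mathcal{A}_{0s}-\omega_1)^{-1} : -\lambda\notin\bar{\Sigma}_\varphi\}$ is $\mathcal{R}$-bounded for some $\varphi<\pi/2$. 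For large $|\lambda|$ this follows from the $\mathcal{R}$-sectoriality of $\mathcal{A}_{0s}+\omega$; on the remaining compact set it follows from holomorphy of the resolvent (cf.\ \cite[p.~77]{KW}). Therefore $\mathcal{A}_{0s}-\omega_1$ has maximal regularity on $(0,\infty)$, and this is where $\mathcal{HT}$ enters.

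If you want to keep your semigroup route, you need an explicit bridge applied to $\mathcal{A}_{0s}-\omega_1$. Dore's theorem would serve: maximal regularity on a bounded interval plus exponential stability of the analytic semigroup implies maximal regularity on $(0,\infty)$.

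Alternatively, your first paragraph can be turned into a legitimate citation. Write $\mathcal{A}(v_*)=\mathcal{A}_0-B$ with $B=(\mathcal{A}'(v_*)\,\cdot\,)v_*-\mathcal{F}'(v_*)\in\mathcal{L}(X_\gamma,X_0)$, which is of lower order. Then $\mathcal{A}(v_*)+\omega'$ has maximal regularity on $(0,\infty)$ for large $\omega'$, by \cite[Proposition~4.4.3]{PS} and Lemma~\ref{LRMR}. Replacing $(\mathcal{A},\mathcal{F})$ by $(\mathcal{A}+\omega',\mathcal{F}+\omega' I)$ changes neither the equation, nor $\mathcal{E}$, nor $\mathcal{A}_0$.

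Your isolated case $m=0$ needs the same input, with $X_s=X_0$.
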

Of course, if $v_*$ is moreover isolated, then it is nonlinearly stable and $\left\|v(t)-v_*\right\|_{X_\gamma}\le c_0 e^{-\lambda_0t}$.

In \cite[Theorem~5.3.1]{PS}, the maximal regularity for $\mathcal{A}(v_*)$ on $(0,\infty)$ is assumed instead of $\mathcal{A}_0+\omega$ without assuming $X$ is of class $\mathcal{HT}$.
 As a key step, the operator $\mathcal{A}_0$ is decomposed as $\mathcal{A}_0=\mathcal{A}_{0c}\oplus\mathcal{A}_{0s}$ associated with decomposition $X_0=X_c\oplus X_s$ where $X_c$ is the zero-eigenspace of $\mathcal{A}_0$.
 This is possible by (S3).
 Of course $\mathcal{A}_{0s}+\omega$ has the maximal regularity in $(0,\infty)$ in $X_s$ space.
 Since we consider on $X_s$, there is no spectrum of $\mathcal{A}_{0s}$ outside of $\Sigma_\varphi$.
 In particular $0$ is not a spectrum.
 In this case, $\mathcal{A}_{0s}-\omega_1$ for sufficiently small $\omega_1>0$ has the maximal regularity on $(0,\infty)$.
 For more details, see e.g.\ \cite[p.~77]{KW}.

\subsection{Properties of kernel of $A_0$} \label{SSKer}

We recall the operator $A_0$ defined by \eqref{EOpA0}
with $d$ replaced by $d/\gamma$.
 We set
\[
    d'=d/(\kappa\gamma), \quad
    \alpha=\eta_w^e=\eta_w(w^e,\theta^e), \quad
    \beta=\theta^e/\kappa.
\]
Moreover, we set
\[
    L^e=\varepsilon L+\psi_{0ww}^e.
\]
Then $A_0$ can be rewritten as
\[
    A_0=
    \begin{pmatrix}
    LL^e & -L(\alpha\cdot) \\
    -\alpha\beta LL^e & d'L+\alpha\beta L(\alpha\cdot)
    \end{pmatrix}
\]
or
\[
    A_0U=
    \binom{LL^ew-L(\alpha\theta)}{-\alpha\beta LL^ew+d'L\theta+\alpha\beta L(\alpha\theta)}
    \quad\text{for}\quad
    U^T=(w,\theta)\in X_1.
\]
We notice that
\begin{equation} \label{ERepA0}
    A_0=
    \begin{pmatrix}
    1 & 0 \\
    -\alpha\beta  & 1
    \end{pmatrix}
    L
    \begin{pmatrix}
    L^e & -\alpha \\
    0 & d'
    \end{pmatrix}.
\end{equation}
We know that $\beta$ is a positive constant while $\alpha$ is still real-valued but it may not be a constant; $\alpha$ may change the sign.
 We shall characterize the kernel (or the nullity) of $A_0$ defined by
\[
    \ker A_0=\left\{U^T\in X_1 \mid
    A_0 U=0 \right\}.
\]
\begin{lem} \label{LKer}
Assume that $\Omega$ is a bounded domain in $\mathbb{R}^n$.
 Let $-L$ be the Laplace operator with the Neumann boundary condition in $L^q(\Omega)$ ($1<q<\infty$).
 Then
\[
    \ker A_0= \left\{(w,c_1)\in X_1 \bigm|
    c_1\in\mathbb{C},\ 
    L^ew-\alpha c_1/d'=c_2\ \text{with some}\ c_2\in\mathbb{C}\right\},
\]
where $\alpha=\eta_w^e$.
 In particular, if $\alpha=\eta_w^e$ is a constant, then
\[
    \ker A_0=\left\{(w,c_1)\in X_1 \bigm|
    c_1\in\mathbb{C},\ 
    L^ew=c_2\ \text{with some}\ c_2\in\mathbb{C}\right\}.
\]
The same results are valid when $\Omega=\mathbb{T}^n$.
\end{lem}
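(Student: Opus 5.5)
The factorization \eqref{ERepA0} reduces the computation of $\ker A_0$ to the kernel of $L$ and the structure of the triangular matrix operator. For $U^T=(w,\theta)\in X_1$ we have $A_0U=0$ if and only if
\[
    L\begin{pmatrix} L^e & -\alpha \\ 0 & d'\end{pmatrix}\binom{w}{\theta}=0,
\]
because the left factor $\begin{psmallmatrix}1&0\\-\alpha\beta&1\end{psmallmatrix}$ is a pointwise multiplication operator with inverse $\begin{psmallmatrix}1&0\\\alpha\beta&1\end{psmallmatrix}$, which is bounded on $X_0$ since $\alpha\in C(\bar\Omega)$. The condition above says that $L(L^ew-\alpha\theta)=0$ and $d'L\theta=0$. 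Here $L$ acts on each component, and we must check that both arguments lie in $D(L)$. For $\theta\in D(L)$ this holds by definition. For $w\in D(L^2)$ we have $Lw\in D(L)$. Then $L^ew-\alpha\theta=\varepsilon Lw+\psi^e_{0ww}w-\alpha\theta$ lies in $D(L)$ as well. This uses the equilibrium regularity: $\psi^e_{0ww}$ and $\alpha$ are smooth with vanishing normal derivative, because $\partial w^e/\partial\nu=0$ and $\theta^e$ is constant. This matches how $A_0$ was derived from \eqref{EOpA0}, where the boundary condition for $\mu$ is built into the formulation, so I will take it as the meaning of the composition.

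The key input is $\ker L$. In a bounded connected domain with the Neumann condition, or on $\mathbb{T}^n$, any $z\in D(L)$ with $Lz=0$ is constant. I would prove this by elliptic regularity, which puts $z$ in $W^{2,2}$, followed by integration by parts, $\int_\Omega|\nabla z|^2\,dx=\int_\Omega zLz\,dx=0$. Alternatively one can use the strong maximum principle argument already used in Proposition~\ref{PST}. Since $d'>0$, the second equation gives $\theta=c_1$ constant. The first equation then gives $L^ew-\alpha c_1=c$ for a constant $c$, which is equivalent to $L^ew-\alpha c_1/d'=c_2$ after rescaling.

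Here is the snag: taken literally, $(L^ew-\alpha\theta)$ equal to a constant gives $L^ew-\alpha c_1=\mathrm{const}$, whereas the statement writes $\alpha c_1/d'$. I would resolve this by reparametrizing. In the characterization, $c_1$ should be read as $d'$ times the constant value of $\theta$. Equivalently, the middle factor in \eqref{ERepA0} sends $(w,\theta)$ to $(L^ew-\alpha\theta,\,d'\theta)$, so the constant obtained from the second row is $d'\theta=:c_1$, giving $\theta=c_1/d'$ and $L^ew-\alpha c_1/d'=c_2$. I would present it this way, making explicit that the element of the kernel is $(w,c_1/d')$, or equivalently state the set with the appropriately named constant. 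The set is the same, since $d'>0$ is a fixed constant.

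For the converse inclusion, I need that any pair $w\in D(L^2)$, $\theta$ constant, satisfying $L^ew-\alpha\theta=c_2$ gives $A_0U=0$. This is immediate since $L$ annihilates constants. When $\alpha$ is constant, the term $\alpha c_1/d'$ is itself a constant and is absorbed into $c_2$, which gives the second description. The torus case is identical because $\ker L$ on $\mathbb{T}^n$ is again the constants. The only real obstacle is the domain bookkeeping for the composition $L\circ(\cdot)$, namely making sure $L^ew-\alpha\theta\in D(L)$, and I would handle it by the equilibrium regularity remark above.
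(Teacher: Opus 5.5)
Your argument is correct and matches the paper's proof: invert the lower-triangular factor in \eqref{ERepA0}, then use $\ker L=\{\text{constants}\}$ to reduce to $L^ew-\alpha\theta\in\ker L$ and $d'\theta\in\ker L$. You are also right about the $c_1/d'$ discrepancy: the paper's proof gives $d'\theta=c_1$, so the kernel element is really $(w,c_1/d')$, and the $(w,c_1)$ in the statement is a misprint that can matter only when $\alpha$ is nonconstant and $d'\neq1$ (the constant-$\alpha$ description is unaffected).
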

\begin{proof}
By our representation formula \eqref{ERepA0}, $(w,\theta)\in\ker A_0$ is equivalent to saying that
\[
    L^ew-\alpha\theta\in\ker L, \quad
    d'\theta\in\ker L.
\]
Since $\ker L=\{c\in\mathbb{C}\}$, we obtain the desired result.
\end{proof}

We next study relation of $\ker A_0$ and $\ker B^e$ where $B^e$ is defined as
\[
    B^e=P_1L^e, \quad
    P_1w=w-\frac{1}{|\Omega|} \int_\Omega w\,dx
\]
as an operator in $P_1L^q(\Omega)$ which is the space of all average free $L^q$ functions.
 Lemma~\ref{LKer} immediately implies the following relation.
\begin{cor} \label{CKer}
Assume that $\eta_w^e$ is a constant function.
 Then
\[
    \ker A_0 =
    \left\{(w+c,c_1)\in X_1 \bigm|
    w\in\ker B^e\ \text{(in}\ P_1L^q\text{)},\ 
    c,c_1\in\mathbb{C} \right\}.
\]
\end{cor}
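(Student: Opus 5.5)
The plan is to start from the second description in Lemma~\ref{LKer}. When $\eta_w^e$ is constant, that lemma gives
\[
\ker A_0=\{(w,c_1)\in X_1 \mid c_1\in\mathbb{C},\ L^ew=c_2 \text{ for some } c_2\in\mathbb{C}\}.
\]
So it suffices to show that, for $w\in D(L^2)$, the condition ``$L^ew$ is a constant'' holds exactly when $w=\tilde w+c$ with $\tilde w\in\ker B^e$ and $c\in\mathbb{C}$. Here $B^e=P_1L^e$ acts in $P_1L^q(\Omega)$, and I read its domain as $P_1D(L^2)$, matching the $w$-component of $X_1$. The $\theta$-component is untouched, since in both sets it is an arbitrary constant $c_1$.

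\emph{Inclusion of $\ker A_0$ in the right-hand side.} Take $(w,c_1)\in\ker A_0$, so $L^ew=c_2$. Split $w=P_1w+c$ with $c=|\Omega|^{-1}\int_\Omega w\,dx$; since constants lie in $D(L^2)$, we have $P_1w\in P_1D(L^2)$. Because $Lc=0$, we get $L^ec=\psi_{0ww}^e c$. Hence $L^eP_1w=c_2-\psi_{0ww}^e c$. Applying $P_1$ kills the constant $c_2$ and gives $B^eP_1w=-c\,P_1\psi_{0ww}^e$.

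This is where the argument is not automatic. The right-hand side vanishes only when $c=0$ or when $\psi_{0ww}^e$ is constant. So I would not split off the mean of $w$. Instead I would choose the constant $c$ in the decomposition so that $P_1L^e(w-c)=0$, that is, so that $w-c$ itself lies in $\ker B^e$. In the spatially homogeneous case ($\psi_{0ww}^e$ constant), $P_1$ commutes with $L^e$ and the mean-subtraction decomposition works directly. In general I would reinterpret the statement in the sense that $B^e$ acts on functions whose image is projected, i.e.\ $w\in\ker B^e$ means $L^ew$ is constant. I expect the main difficulty to be this bookkeeping of the mean together with the nonconstant coefficient $\psi_{0ww}^e$.

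\emph{Converse inclusion.} Take $\tilde w\in\ker B^e$ and constants $c,c_1$. Then $L^e(\tilde w+c)=L^e\tilde w+\psi_{0ww}^e c$, and $L^e\tilde w$ is constant because $P_1L^e\tilde w=0$. The term $\psi_{0ww}^e c$ is constant in the same situations as above. Under that reading, Lemma~\ref{LKer} gives $(\tilde w+c,c_1)\in\ker A_0$. Regularity is immediate, since $\tilde w+c\in D(L^2)$ and $c_1\in D(L)$, so the pair lies in $X_1$.

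The case $\Omega=\mathbb{T}^n$ goes through identically, because there too $\ker L$ consists of the constants.
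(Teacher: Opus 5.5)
Your diagnosis is correct: splitting off the mean only works when $\psi_{0ww}^e=\psi_{0ww}(w^e,\theta^e)$ is spatially constant. Neither of your remedies works, though, and no remedy can, because as stated the corollary is false whenever $\psi_{0ww}^e$ is not constant.

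You cannot ``choose $c$'' so that $w-c\in\ker B^e$. Since $\ker B^e\subset P_1L^q(\Omega)$, $w-c$ must be average-free, so $c=|\Omega|^{-1}\int_\Omega w\,dx$ is forced. Then $B^e(w-c)=-c\,P_1\psi_{0ww}^e\neq0$ whenever $c\neq0$. Redefining $\ker B^e$ as $\{w: L^ew\in\mathbb{C}\}$ changes the statement, and it changes the number $\dim\ker B^e$ that enters the non-degeneracy condition. It also leaves the converse inclusion broken. Your claim that Lemma~\ref{LKer} gives $(\tilde w+c,c_1)\in\ker A_0$ is unjustified, since $L^e(\tilde w+c)=L^e\tilde w+c\,\psi_{0ww}^e$ is not constant for $c\neq0$.

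Here is a concrete failure. Taking $\tilde w=0\in\ker B^e$, the pair $(c,c_1)$ lies in the right-hand side for every $c\in\mathbb{C}$. By Lemma~\ref{LKer}, with $\alpha$ constant, $(c,c_1)\in\ker A_0$ if and only if $L^ec=c\,\psi_{0ww}^e$ is constant. Equivalently, the first component of $A_0(c,c_1)^T$ is $c\,L\psi_{0ww}^e$, which is nonzero unless $\psi_{0ww}^e$ is constant.

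This situation is compatible with the corollary's hypothesis. For $\psi_0=(w^2-1)^2/4-\eta_0\theta w-c_v\theta\log\theta$ one has $\eta_w\equiv\eta_0$ and $\kappa\equiv c_v$. Any nonconstant equilibrium $w^e$ (these exist, e.g.\ for small $\varepsilon$) then gives $\psi_{0ww}^e=3(w^e)^2-1$, which is nonconstant.

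The paper's one-line derivation (``Lemma~\ref{LKer} immediately implies'') silently uses that $L^e$ maps constants to constants. That is the same missing hypothesis.

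What you can actually prove is this. If $\psi_{0ww}^e$ is spatially constant (for instance when $w^e$ is constant, as in Corollary~\ref{CMS}), then $P_1L^e=L^eP_1$, and your mean-splitting argument proves both inclusions. In general, the correct description is the one already given by Lemma~\ref{LKer}: $\ker A_0=\{(w,c_1)\in X_1 : c_1\in\mathbb{C},\ L^ew\in\mathbb{C}\}$. The right conclusion from your analysis was that the stated equality fails without an extra hypothesis, not a reinterpretation of $\ker B^e$.
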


Let us consider structure of the set of equilibria.
 For a given $\theta^e\in\mathbb{R}$ and $w_{\mathrm{av}}\in\mathbb{R}$, we set
\[
    \mathcal{E}_{\theta^e}^{w_{\mathrm{av}}}
    =\left\{ w\in D(L^2) \biggm|
    \delta_w\psi(w,\theta^e)=\text{const.},\ 
    \frac{1}{|\Omega|}\int_\Omega w\,dx=w_{\mathrm{av}} \right\},
\]
where $w$ is considered a real-valued function.
 Let $\mathcal{E}$ denote the set of all (real-valued) equilibria of \eqref{ECH1} subject to \eqref{EBT1}.
 Then
\[
    \mathcal{E}=\left\{(w,\theta^e)\in X_1 \bigm|
    w\in\mathcal{E}_{\theta^e}^{w_{\mathrm{av}}},\
    w_{\mathrm{av}}\in\mathbb{R},\
    \theta^e\in\mathbb{R} \right\}.
\]
In general, at some point of $\mathcal{E}_{\theta^e}^{w_{\mathrm{av}}}$ the dependence of $\theta^e$ and $w_{\mathrm{av}}$ may not be smooth.
 For example, possibility of coexistence of two phases may depend on temperature.
\begin{prop} \label{PEq}
Assume that $\mathcal{E}_{\theta^e}^{w_{\mathrm{av}}}$ is $m$-dimensional manifold near $w^e$ for $\int_\Omega w^e\,dx=|\Omega|w_{\mathrm{av}}$.
\begin{enumerate}
    \item[(i)] If $w_{\mathrm{av}}$ and $\theta^e$ are smooth parameters of $\mathcal{E}_{\theta^e}^{w_{\mathrm{av}}}$, then $\mathcal{E}$ is $m+2$ dimensional manifold near $(w^e,\theta^e)$.
    \item[(i\hspace{-0.1em}i)] If $\eta_w(w,\theta)$ is a constant (which may depend on $w$, $\theta$), then $\mathcal{E}_{\theta^e}^{w_{\mathrm{av}}}$ is independent of $\theta^e$.
\end{enumerate}
\end{prop}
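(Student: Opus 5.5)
For (i), I will describe $\mathcal{E}$ near $(w^e,\theta^e)$ as a parametrized family. By the standing assumption and the hypothesis of (i), there is a $C^1$ map
\[
    \Phi:(\sigma,a,\vartheta)\mapsto \Phi(\sigma,a,\vartheta)\in D(L^2), \quad
    \sigma\in\mathbb{R}^m \text{ near } 0,\ a \text{ near } w_{\mathrm{av}},\ \vartheta \text{ near } \theta^e,
\]
such that $\sigma\mapsto\Phi(\sigma,a,\vartheta)$ parametrizes $\mathcal{E}_{\vartheta}^{a}$ near the corresponding point. Here ``smooth parameters'' is read as: the $m$-dimensional charts of the slices depend $C^1$ on $(a,\vartheta)$, with injective differential in $\sigma$. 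By the description of $\mathcal{E}$ stated just before the proposition, which rests on Proposition~\ref{PST}, $\mathcal{E}$ near $(w^e,\theta^e)$ is the image of
\[
    \Psi(\sigma,a,\vartheta)=\left(\Phi(\sigma,a,\vartheta),\vartheta\right)\in X_1 .
\]
The plan is to show that $\Psi$ is an injective immersion and a homeomorphism onto its image; this gives an $(m+2)$-dimensional $C^1$ manifold.

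\textbf{Step 1 (injectivity).} If $\Psi(\sigma,a,\vartheta)=\Psi(\sigma',a',\vartheta')$, the second components give $\vartheta=\vartheta'$. Averaging the first components gives $a=a'$. Injectivity of the slice chart then gives $\sigma=\sigma'$.

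\textbf{Step 2 (immersion).} Take $D\Psi\,(s,\dot a,\dot\vartheta)=0$. The $\theta$-component is $\dot\vartheta$, so $\dot\vartheta=0$. Applying $|\Omega|^{-1}\int_\Omega$ to the $w$-component kills $\partial_\sigma\Phi\, s$, since averages are constant along a slice, and returns $\dot a$; hence $\dot a=0$. Then $\partial_\sigma\Phi\, s=0$ forces $s=0$.

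\textbf{Step 3 (homeomorphism onto the image).} The inverse is continuous because $\vartheta$ and $a$ are recovered continuously, as the second component and the average of the first. The remaining coordinate $\sigma$ is recovered continuously by the slice charts, which depend continuously on $(a,\vartheta)$. This is the step I expect to need the most care. The cleanest route is to state the parametrization assumption in (i) precisely as this uniform-chart condition, so that the conclusion is essentially bookkeeping.

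For (ii), I will read ``$\eta_w$ is a constant'' as: $\eta_w$ does not depend on $w$ (it may depend on $\theta$), i.e.\ $\psi_{0\theta w}$ is independent of $w$. Integrating $\partial_w\psi_{0\theta}=g(\theta)$ in $w$ and then in $\theta$ gives
\[
    \psi_0(w,\theta)=\psi_1(w)+G(\theta)\,w+H(\theta), \qquad G'=g .
\]
Hence
\[
    \delta_w\psi(w,\theta^e)=\psi_1'(w)+\frac{\varepsilon_w}{2}|\nabla w|^2-\operatorname{div}(\varepsilon\nabla w)+G(\theta^e).
\]
The last term is a constant in $x$. Therefore the condition ``$\delta_w\psi(w,\theta^e)=\mathrm{const}$'' is equivalent to the $\theta^e$-free condition ``$\psi_1'(w)+\frac{\varepsilon_w}{2}|\nabla w|^2-\operatorname{div}(\varepsilon\nabla w)=\mathrm{const}$''. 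Here $\varepsilon$ is independent of $\theta$, as assumed in Section~\ref{SL}. The average constraint does not involve $\theta^e$ either, so $\mathcal{E}_{\theta^e}^{w_{\mathrm{av}}}$ is independent of $\theta^e$. As a consistency check, this also makes $\theta^e$ trivially a smooth parameter in (i).
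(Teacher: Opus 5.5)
Your proposal is correct and follows the paper's argument. For (i) the paper simply calls the assertion clear, and your Steps 1--3 are the bookkeeping behind that (reading ``smooth parameters'' as the uniform-chart condition); for (ii) the paper likewise integrates the hypothesis on $\eta_w$ to get $\psi_{0w}(w,\theta)=\psi_{00}(w)-\theta\eta_0$, so that $\theta$ enters only as an additive constant, while your version is slightly more general (it allows $\eta_w$ to depend on $\theta$) and works directly with the nonlinear condition $\delta_w\psi(w,\theta^e)=\mathrm{const}$ defining $\mathcal{E}_{\theta^e}^{w_{\mathrm{av}}}$ rather than through the linearized operator $L^e$.
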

\begin{proof}
The first assertion is clear.
 If $\eta_w(w,\theta)$ is a constant $\eta_0$, then $\psi_{0w}(w,\theta)=\psi_{00}(w)-\theta\eta_0$ with some function $\psi_{00}$ depending only on $w$.
 The equation $L^ew=\mathrm{const}$ with $L^\varepsilon=\varepsilon L+\psi_{0ww}^e$ is unchanged even if we change $\theta$.
\end{proof}
\begin{cor} \label{CND}
Assume that $\mathcal{E}_{\theta^e}^{w_{\mathrm{av}}}$ is $m$-dimensional manifold near $w^e$ for $\int_\Omega w^e\,dx=|\Omega|w_{\mathrm{av}}$.
 Assume that $w_{\mathrm{av}}$ and $\theta^e$ are smooth parameters.
 If $w^e$ is non-degenerate, then the dimension of $\mathcal{E}$ near $(w^e,\theta^e)$ equals $\dim \ker A_0$ provided that $\eta_w^e$ is a constant function.
\end{cor}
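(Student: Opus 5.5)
The plan is to count dimensions on both sides and compare them using Corollary~\ref{CKer} and Proposition~\ref{PEq}(i). Non-degeneracy of $w^e$ says that $\mathcal{E}_{\theta^e}^{w_{\mathrm{av}}}$ is an $m$-dimensional manifold near $w^e$ with $m=\dim\ker B^e$. Since $w_{\mathrm{av}}$ and $\theta^e$ are assumed to be smooth parameters, Proposition~\ref{PEq}(i) gives that $\mathcal{E}$ is an $(m+2)$-dimensional manifold near $(w^e,\theta^e)$. It therefore suffices to show $\dim\ker A_0=m+2$.

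For this I invoke Corollary~\ref{CKer}, which applies because $\eta_w^e$ is constant:
\[
    \ker A_0=\left\{(w+c,c_1)\in X_1 \bigm| w\in\ker B^e,\ c,c_1\in\mathbb{C}\right\}.
\]
Consider the linear map
\[
    \ker B^e\times\mathbb{C}\times\mathbb{C}\to\ker A_0,\qquad (w,c,c_1)\mapsto(w+c,c_1).
\]
It is surjective by the corollary. It is injective because $w\in\ker B^e\subset P_1L^q(\Omega)$ is average free, so $w+c$ determines $c$ as its average and then determines $w$; and $c_1$ is read off from the second component. Hence $\dim\ker A_0=\dim\ker B^e+2=m+2$, which matches $\dim\mathcal{E}$.

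The main obstacle is a bookkeeping point: $\ker A_0$ is a complex space, while $\mathcal{E}$ consists of real-valued equilibria. Since $L^e$ and $B^e$ have real coefficients, $\ker B^e$ is the complexification of its real part, so complex dimension equals real dimension and the count is consistent. I would also check that the elements $w+c$ really lie in $D(L^2)$, so that the pairs belong to $X_1$. This follows from elliptic regularity: $\varepsilon Lw=c_2-\psi_{0ww}^e w$ with smooth $w^e$, and the Neumann condition is built into $D(L)$.
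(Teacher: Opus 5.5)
Your argument is the paper's own one-line proof: Proposition~\ref{PEq}(i) gives $\dim\mathcal{E}=m+2$, and Corollary~\ref{CKer} gives $\dim\ker A_0=m+2$. The second step fails whenever $\psi_{0ww}^e=\psi_{0ww}(w^e,\theta^e)$ is not spatially constant, which is the generic case for a non-constant equilibrium (e.g.\ of a double-well $\psi_0$). Your map $(w,c,c_1)\mapsto(w+c,c_1)$ does not land in $\ker A_0$. For $w\in\ker B^e$ the function $L^ew$ is constant, but $L^e(w+c)=L^ew+c\,\psi_{0ww}^e$ is not constant when $c\neq0$. Concretely, $(1,0)^T\in X_1$, yet $A_0(1,0)^T=(L\psi_{0ww}^e,\,-\alpha\beta L\psi_{0ww}^e)^T\neq0$; note $\psi_{0ww}^e\in D(L)$ because $\partial_\nu w^e=0$. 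So Corollary~\ref{CKer} is not a correct set identity, and citing it for well-definedness and surjectivity is a genuine gap. Your injectivity and regularity checks do not touch this.

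What \eqref{ERepA0} (Lemma~\ref{LKer}) actually gives is that $(w,\theta)\in\ker A_0$ if and only if $\theta$ and $L^ew-\alpha\theta$ are constants. For constant $\alpha$ this means $\ker A_0=K\times\mathbb{C}$ with $K=\{w\in D(L^2)\mid L^ew\in\mathbb{C}\}$, and $\ker B^e=K\cap L^q_{\mathrm{av}}(\Omega)$. Hence $\dim K\in\{m,m+1\}$, and linear algebra alone does not decide which. For example, if $L^e$ is invertible and $\int_\Omega (L^e)^{-1}1\,dx=0$, then $K=\ker B^e$ and $\dim\ker A_0=m+1$.

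The missing ingredient is the manifold hypothesis itself. Take a $C^1$ curve $(w(s),\theta(s))$ in $\mathcal{E}$ through $(w^e,\theta^e)$. Both $\theta(s)$ and $\mu(s)=\psi_{0w}(w(s),\theta(s))+\varepsilon Lw(s)$ are spatially constant. Differentiating at $s=0$ gives $L^e\dot w-\alpha\dot\theta=\dot\mu\in\mathbb{R}$ with $\dot\theta\in\mathbb{R}$, so $(\dot w,\dot\theta)\in\ker A_0$.

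\begin{itemize}
\item Lower bound: the $(m+2)$-dimensional tangent space of $\mathcal{E}$ from Proposition~\ref{PEq}(i) lies in $\ker A_0$, so $\dim\ker A_0\ge m+2$. Your real-versus-complex remark applies here.
\item Upper bound: $(w,\theta)\mapsto\theta$ maps $\ker A_0$ into $\mathbb{C}$ with kernel $K\times\{0\}$. Since $\ker B^e$ has codimension at most one in $K$, this gives $\dim\ker A_0\le\dim K+1\le m+2$.
\end{itemize}

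Hence $\dim\ker A_0=m+2$, and in fact $T_{(w^e,\theta^e)}\mathcal{E}=\ker A_0$, which is exactly what (S2) needs. What rules out $\dim K=m$ is the hypothesis that $w_{\mathrm{av}}$ is a smooth parameter: a curve in $\mathcal{E}$ along which the mean varies produces an element of $K$ with nonzero mean. So the statement is true, but not for the reason your proposal (and the cited corollary) gives.
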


This follows from Corollary~\ref{CKer} and Proposition~\ref{PEq}.

\subsection{Spectrum of $A_0$} \label{SSS}

We next study spectral properties of $A_0$.
 We first consider the case $q=2$ with special choice of $\beta$ and $d'$.
 In the sequel, we denote the complex conjugate of $w$ by $\bar{w}$.
\begin{lem} \label{LEG}
Let $\lambda\in\mathbb{C}$ be an eigenvalue of $A_0$ in $X_0=L^2(\Omega)\times L^2(\Omega)$.
 Assume that $d'\beta=1$ (i.e., $\theta^e=\kappa^2\gamma/d$) and $\int_\Omega\bar{w}L^ew\, dx\ge0$ for $w\in D(L)$.
 Then $\lambda$ is a non-negative real number.
\end{lem}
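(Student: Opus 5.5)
The plan is to test the eigenvalue equation against suitable conjugated quantities, so as to obtain a quadratic-form identity of the type $(\text{nonnegative}) = \lambda\cdot(\text{nonnegative})$. The factorization \eqref{ERepA0} suggests the right test functions.

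Let $U=(w,\theta)^T\in X_1$, $U\neq0$, satisfy $A_0U=\lambda U$ in $L^2(\Omega)\times L^2(\Omega)$. Put $\phi:=L^e w-\alpha\theta$. By \eqref{ERepA0} the system reads
\[
L\phi=\lambda w,\qquad -\alpha\beta L\phi+d'L\theta=\lambda\theta .
\]
Substituting the first equation into the second gives
\[
d'L\theta=\lambda(\theta+\alpha\beta w).
\]
We take $\phi$, and $\alpha\theta$, to lie in the domain where $L$ acts, as is implicit in the definition of $A_0$ on $X_1$; since $\alpha=\eta_w^e$ is a smooth function of the smooth equilibrium, this is a bookkeeping point only. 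Then the integrations by parts below are legitimate (Neumann condition or periodicity), and $L$ is self-adjoint and nonnegative on $L^2$.

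\textbf{Step 1.} Multiply $L\phi=\lambda w$ by $\bar\phi$ and integrate. Using $\bar\phi=\overline{L^e w}-\alpha\bar\theta$ (recall $\alpha$ is real), this gives
\[
\int_\Omega \bar\phi L\phi\,dx=\lambda\int_\Omega w\,\overline{L^e w}\,dx-\lambda\int_\Omega \alpha\bar\theta w\,dx .
\]

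\textbf{Step 2.} Multiply $d'L\theta=\lambda(\theta+\alpha\beta w)$ by $\bar\theta/\beta$ and integrate:
\[
\frac{d'}{\beta}\int_\Omega\bar\theta L\theta\,dx=\frac{\lambda}{\beta}\int_\Omega|\theta|^2dx+\lambda\int_\Omega\alpha w\bar\theta\,dx .
\]

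\textbf{Step 3.} Add the two identities. The mixed terms $\mp\lambda\int\alpha w\bar\theta$ cancel, leaving
\[
\int_\Omega \bar\phi L\phi\,dx+\frac{d'}{\beta}\int_\Omega\bar\theta L\theta\,dx=\lambda\Big(\int_\Omega w\,\overline{L^e w}\,dx+\frac1\beta\int_\Omega|\theta|^2dx\Big).
\]
Since $\varepsilon L+\psi^e_{0ww}$ is symmetric, $\int w\overline{L^e w}$ is the complex conjugate of $\int\bar w L^e w$. The latter is real and, by hypothesis, nonnegative. The left side is real and $\ge0$ because $L\ge0$. The factor multiplying $\lambda$ is $\ge0$ since $\beta>0$. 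The hypothesis $d'\beta=1$ makes the left side the symmetric form $\langle\phi,L\phi\rangle+d'^2\langle\theta,L\theta\rangle$. I expect the cancellation to work with the weight $1/\beta$ alone, and I will use $d'\beta=1$ only if the bookkeeping forces it.

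\textbf{Step 4: degenerate case.} If the bracket is positive, then $\lambda$ is a ratio of two nonnegative reals, hence $\lambda\ge0$. The remaining case is where the bracket vanishes; I expect this to be the only delicate point. There $\theta=0$, and the left side must vanish as well, so $\int\bar\phi L\phi=0$. Hence $\phi\in\ker L$, so $L\phi=0$, which forces $\lambda w=0$. If $\lambda\neq0$, this gives $w=0$ and so $U=0$, a contradiction. Hence $\lambda=0$, which is again a nonnegative real. This completes the argument.
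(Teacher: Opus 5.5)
Your proof is correct, and it takes a genuinely different route from the paper's. It is also stronger, because it never uses the hypothesis $d'\beta=1$.

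\textbf{The paper's route.} The paper conjugates $A_0$ to the similar operator $A^e$. It uses $\lambda\neq0$ to place $w,\theta$ in $\operatorname{im}L$, and then tests with $(L^{-1}\bar w,L^{-1}\bar\theta)$, which is an $H^{-1}$-type pairing. This puts the positive definite quantity $\int_\Omega(wL^{-1}\bar w+\theta L^{-1}\bar\theta)\,dx$ on the $\lambda$-side, so no degenerate case arises. The cost is the form $\int_\Omega\bar wL^ew\,dx+\int_\Omega Q(\alpha w,\theta)\,dx$. Its cross terms $-\bar XY-\beta d'X\bar Y$ have nonnegative real part for all $X,Y$ only when $\beta d'=1$, in which case $Q=|\sqrt{\beta}X-\sqrt{d'}Y|^2$. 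That is the only place the hypothesis enters.

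\textbf{Your route.} You test the first equation with $\bar\phi$, where $\phi=L^ew-\alpha\theta$ is the linearized chemical potential. You test the combined equation $d'L\theta=\lambda(\theta+\alpha\beta w)$ with $\bar\theta/\beta$. The terms $\mp\lambda\int_\Omega\alpha w\bar\theta\,dx$ then cancel identically for every $\beta,d'>0$. So your hedge in Step~3 is unnecessary: $d'\beta=1$ is never needed.

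\textbf{Meaning of your identity.} The identity
\[
\|\nabla\phi\|^2+\frac{d'}{\beta}\|\nabla\theta\|^2=\lambda\Bigl(\int_\Omega\bar wL^ew\,dx+\frac1\beta\|\theta\|^2\Bigr)
\]
has a transparent thermodynamic meaning.
\begin{enumerate}
\item The left side is the quadratic part at $(w^e,\theta^e)$ of the entropy production $\xi$, divided by $\gamma$.
\item The bracket is the second variation at $(w^e,\theta^e)$ of $\int_\Omega(\epsilon-\theta^e\eta)\,dx$, because $\eta_\theta=\kappa/\theta^e=1/\beta$ there.
\end{enumerate}
This explains why no relation among $\kappa,\gamma,d,\theta^e$ is required.

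\textbf{Trade-offs.} The price of your approach is that the $\lambda$-side is only semidefinite. Your Step~4 handles that case correctly: $\theta=0$, then $\nabla\phi=0$, so $L\phi=0$ and $\lambda w=0$. In exchange, you need neither $L^{-1}$ nor the similarity transform.

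\textbf{Consequence.} The paper uses $\theta^e=\kappa^2\gamma/d$ only through this lemma, via Lemma~\ref{LEGG} in the verification of (S4) in Theorem~\ref{TMS}. Your argument therefore suggests that this assumption can be dropped there.
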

\begin{proof}
Since
\[
    \begin{pmatrix}
    1 & 0 \\
    \tau & 1
    \end{pmatrix}^{-1}
    =
    \begin{pmatrix}
    1 & 0 \\
    -\tau & 1
    \end{pmatrix}
\]
so that
\[
    A_0=
    \begin{pmatrix}
    1 & 0 \\
    -\alpha\beta & 1
    \end{pmatrix}
    A^e
    \begin{pmatrix}
    1 & 0 \\
    -\alpha\beta & 1
    \end{pmatrix}^{-1},\quad
    A^e= L
    \begin{pmatrix}
    L^e & -\alpha \\
    0 & d'
    \end{pmatrix}
    \begin{pmatrix}
    1 & 0 \\
    -\alpha\beta & 1
    \end{pmatrix},
\]
we see that the sets of eigenvalues of $A_0$ and $A^e$ are the same.
 Assume that 
\begin{equation} \label{EEig}
A^eU=\lambda U
\end{equation}
for some $\lambda\in\mathbb{C}\setminus\{0\}$.
 Then $U^T=(w,\theta)\in\operatorname
 {im}L\times\operatorname
 {im}L$ so that $\int_\Omega w\,dx=0$, $\int_\Omega\theta\,dx=0$.
 Here $\operatorname{im}L$ denotes the image (range $R(L)$) of $L$, i.e.,
\[
    \operatorname{im}L=\left\{ v\in X_0 \bigm| v=Lu\ \text{for some}\ u\in D(L)\right\}.
\]
 Thus the inverse $L^{-1}\bar{w}$, $L^{-1}\bar{\theta}$ is uniquely determined if we impose $\int_\Omega L^{-1}\bar{w}\,dx=0$, $\int_\Omega L^{-1}\bar{\theta}\,dx=0$.
 We multiply $(L^{-1}\bar{w},L^{-1}\bar{\theta})$ from the left of \eqref{EEig} and observe that
\begin{equation} \label{EEig2}
    (\bar{w},\bar{\theta})
    \begin{pmatrix}
    L^e & -\alpha \\
    0 & d'
    \end{pmatrix}
    \begin{pmatrix}
    1 & 0 \\
    -\alpha\beta & 1
    \end{pmatrix}
    \binom{w}{\theta}
    =\lambda(wL^{-1}\bar{w}+\theta L^{-1}\bar{\theta}).
\end{equation}
Since
\[
    \begin{pmatrix}
    L^e & -\alpha \\
    0 & d'
    \end{pmatrix}
    \begin{pmatrix}
    1 & 0 \\
    -\alpha\beta & 1
    \end{pmatrix}
    =\begin{pmatrix}
    L^e+\alpha^2\beta & -\alpha \\
    -d'\alpha\beta & d'
    \end{pmatrix},
\]
the left-hand side of \eqref{EEig2} becomes
\[
    \bar{w}(L^ew+\alpha^2\beta w)
    -\bar{w}\alpha\theta
    +\bar{\theta}(-d'\alpha\beta w+d'\theta).
\]
It can be written as
\begin{equation} \label{EQP}
    \bar{w}L^ew+Q(w_\alpha,\theta)
\end{equation}
if we set $w_\alpha=\alpha w$, where $Q$ is a quadratic (Hermite) form
\[
    Q(X,Y)=\beta |X|^2-\bar{X}Y-\beta d'X\bar{Y}
    +d'|Y|^2
\]
for $X,Y\in\mathbb{C}$.
 It is easy to see that real part $\operatorname{Re}Q\ge0$ for all $X,Y\in\mathbb{C}$ if and only if $\beta d'=1$.
 (If $\beta d'=1$, then
\[
    Q(w_\alpha,\theta)=\left| w_\alpha\sqrt{\beta}
    -\theta\sqrt{d'} \right|^2.\text{)}
\]
We integrate \eqref{EQP} to get
\[
    \int_\Omega \bar{w}L^ew\,dx
    +\int_\Omega Q(w_\alpha,\theta)\,dx
    =\lambda\int_\Omega(wL^{-1}\bar{w}+\theta L^{-1}\bar{\theta})\, dx
\]
with $\operatorname{Re}Q\ge0$.
 Since the integral of the right-hand side is positive for $(w,\theta)\neq(0,0)$ and $\int_\Omega\bar{w}L^ew\,dx\ge0$, we observe that $\lambda$ must be real and it must be non-negative.
\end{proof}

We continue to study eigenvalues for $q\neq2$, $1<q<\infty$.
\begin{lem} \label{LEGG}
Let $\lambda\in\mathbb{C}$ be a spectrum of $A_0$ in $X_0=L^q(\Omega)\times L^q(\Omega)$ for $q\in(1,\infty)$.
 Assume that $\theta^e=\kappa^2\gamma/d$ and that $\int_\Omega\bar{w}L^ew\,dx\ge0$ for $w\in D(L)$ with $q=2$.
 Then $\lambda$ is a non-negative eigenvalue.
 Moreover, the set of eigenvalues is discrete.
\end{lem}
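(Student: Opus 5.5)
The plan has three steps: show that $A_0$ has compact resolvent in $X_0=L^q(\Omega)\times L^q(\Omega)$, so the spectrum consists only of isolated eigenvalues of finite multiplicity; show that these eigenvalues do not depend on $q$; then invoke Lemma~\ref{LEG} for $q=2$.

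\textbf{Compact resolvent.} By Corollary~\ref{CMR1} (or Theorem~\ref{TMR} and Corollary~\ref{CMR} applied with $U^*=(w^e,\theta^e)$), $A(U^e)+\omega$ is sectorial in $X_0$. The operator $A_0$ differs from $A(U^e)$ only by terms of lower order: $-L(\psi_{0ww}^e\,\cdot)$ in the first row, and the corresponding terms coming from the $\tilde\theta$-dependence of $\eta_w$ and $\psi_{0w}$. These are relatively bounded with relative bound zero with respect to $A(U^e)$ on $X_1=D(L^2)\times D(L)$.

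Hence, for $\omega$ large, $\omega+A_0$ is invertible with domain $X_1$. Since $\Omega$ is bounded or $\mathbb{T}^n$, the embedding $X_1\hookrightarrow X_0$ is compact by Rellich's theorem. Therefore $(\omega+A_0)^{-1}$ is compact. Riesz--Schauder theory then gives that $\sigma(A_0)$ consists only of isolated eigenvalues of finite algebraic multiplicity with no finite accumulation point. This proves that every spectral point is an eigenvalue and that the set of eigenvalues is discrete.

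\textbf{Independence of $q$.} If $A_0U=\lambda U$ with $U\in X_1$ for some $q$, I will show $U\in D(L^2)\times D(L)$ in $L^2$. For $q\ge2$ this is immediate, since $\Omega$ is bounded. For $q<2$ I will bootstrap with elliptic regularity. Writing the eigen-equation via \eqref{ERepA0}, one solves in turn for $\theta$ from $d'L\theta=\dots$ and for $w$ from $LL^e w=\dots$, using $L^r$ elliptic regularity for $L$ and $L^2$ under the Neumann conditions. Combined with Sobolev embeddings, each step raises the integrability exponent, and finitely many iterations give $U\in W^{4,2}\times W^{2,2}$.

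For this bootstrap I need the coefficients $\alpha=\eta_w^e$ and $\psi_{0ww}^e$ to be smooth enough (e.g.\ $W^{2,\infty}$ or $C^2$), so that $L(\alpha\theta)$ and $L(\psi_{0ww}^e w)$ behave like second-order operators on $\theta$ and $w$. I will assume or derive this regularity of the equilibrium $w^e$, which follows from elliptic regularity for $L^e w^e=\mathrm{const}$ when $\psi_0$ is smooth. Thus every eigenvalue for a given $q$ is an eigenvalue in $L^2(\Omega)\times L^2(\Omega)$.

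\textbf{Conclusion and main obstacle.} By Lemma~\ref{LEG}, using $d'\beta=1$, i.e.\ $\theta^e=\kappa^2\gamma/d$, and the assumed nonnegativity of $L^e$, every eigenvalue is real and nonnegative. This covers $\lambda=0$ as well, and completes the proof.

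The main obstacle is the regularity bootstrap for $q<2$. The system has mixed order, and $\alpha$ is merely continuous under the general assumptions of the paper, so I would add the smoothness of $(w^e,\theta^e)$ explicitly. Alternatively, one can use that the resolvents for different $q$ are consistent on $L^{\max(q,2)}$, together with the density of that space. Since the spectra of consistent compact-resolvent operators on bounded domains coincide, this gives the independence of $q$ without a detailed bootstrap.
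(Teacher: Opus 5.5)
Your proof is correct. Its first step is the same as the paper's: sectoriality of $A(U^e)+\omega$ from Theorem~\ref{TMR}, a lower-order perturbation to reach $A_0$, compactness of $X_1\hookrightarrow X_0$, and Riesz--Schauder.

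Where you differ is the transfer from $L^q$ to $L^2$ when $q<2$. The paper does no regularity bootstrap. It uses that $\lambda-A_0$ is Fredholm of index zero, so an eigenvalue $\lambda$ of $A_0$ in $L^q$ yields the eigenvalue $\bar\lambda$ of the adjoint $A_0^*$ in $L^{q'}$. It then redoes the quadratic-form computation of Lemma~\ref{LEG} for the adjoint, through the conjugated operator $A^{*e}$. In this way only the trivial inclusion $L^r(\Omega)\subset L^2(\Omega)$ for $r\ge2$ is ever used.

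The paper's text attaches the direct argument to $q\le2$ and the duality argument to $q>2$, citing $L^q\supset L^2$. That inclusion points the wrong way. The argument is sound with the two ranges interchanged, which is exactly your split: direct for $q\ge2$, extra work for $q<2$.

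What each route costs:
\begin{itemize}
\item \textbf{Duality (paper):} no smoothness of $\alpha$ or $\psi^e_{0ww}$ enters the eigenfunction analysis. The price is a second energy identity and identifying the Banach adjoint with the formal one.
\item \textbf{Your bootstrap:} it needs $\alpha,\psi^e_{0ww}\in C^2(\bar\Omega)$, which holds under the hypotheses $w^e\in C^3(\bar\Omega)$, $\psi_0\in C^4$ of Theorem~\ref{TMS}. Note that the equilibrium equation is $\varepsilon Lw^e+\psi_{0w}(w^e,\theta^e)=\mathrm{const}$, not $L^ew^e=\mathrm{const}$.
\item \textbf{Your consistency alternative:} this is the cleanest. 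Let $R_q(\mu)$, $R_2(\mu)$ be the resolvents of $A_0$ in $L^q$ and $L^2$. Uniqueness plus $W^{k,r}\subset W^{k,2}$ for $r\ge2$ give $R_q(\mu)=R_2(\mu)$ on $L^{\max(q,2)}$ for every $\mu$ in both resolvent sets. Riesz projections over small circles, plus density, then give equal spectra. It does require a nonempty resolvent set of $A_0$ in $L^2$ as well, i.e.\ Theorem~\ref{TMR} also for $q=2$.
\end{itemize}

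In every version, including the paper's, the sectoriality step tacitly uses \eqref{ESmall} at $(w^e,\theta^e)$. That condition is not among the lemma's stated hypotheses, though it is assumed in Theorem~\ref{TMS}, where the lemma is applied.
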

\begin{proof}
We first note that $A_0$ is sectorial with angle less than $\pi/2$ by the maximal regularity.
 In particular, all spectra should be contained in $\Sigma_{\pi/2-\delta}\cup B_R$ with some $\delta>0$ and $R>0$, where $\Sigma_\theta=\left\{z\in\mathbb{C}\bigm||\arg z|<\theta,\ z\neq0\right\}$ and $B_R=\left\{z\in\mathbb{C}\bigm||z|<R\right\}$.
 Since $(\lambda-A_0)^{-1}$ is a compact operator in $X_0$ for $\lambda$ in the resolvent set, all spectra of $A_0$ must be eigenvalue and discrete by the Riesz- Schauder theory.
 Moreover, $(\lambda-A_0)^{-1}$ is a Fredholm operator of index zero.
 We may assume that $\lambda$ is an eigenvalue of $A_0$.
 Since $\bar{\Omega}$ is compact so that $L^q(\Omega)\supset L^2(\Omega)$ for $q\in(1,2]$, $\lambda$ must be a non-negative real number for $q\in(1,2]$ by Lemma~\ref{LEG}.

For $q\in(2,\infty)$, we consider a dual operator
\begin{align*}
    A_0^*
    &=\begin{pmatrix}
    L^eL & -L^eL(\alpha\beta\cdot) \\
    -\alpha L & d'L+\alpha L(\alpha\beta\cdot)
    \end{pmatrix} \\
    &=\begin{pmatrix}
    L^e & 0 \\
    -\alpha & d'
    \end{pmatrix}
    L\begin{pmatrix}
    1 & -\alpha\beta \\
    0 & 1
    \end{pmatrix} \\
    &=\begin{pmatrix}
    1 & -\alpha\beta \\
    0 & 1
    \end{pmatrix}^{-1}
    A^{*e}\begin{pmatrix}
    1 & -\alpha\beta \\
    0 & 1
    \end{pmatrix}, \quad
    A^{*e}=\begin{pmatrix}
    1 & -\alpha\beta \\
    0 & 1
    \end{pmatrix}
    \begin{pmatrix}
    L^e & 0 \\
    -\alpha & d'
    \end{pmatrix}L.
\end{align*}
As we argue in the proof of Lemma~\ref{LEG} for $A_0^e$, under our assumptions, any eigenvalue of $A_0^{*e}$ is real and non-negative in $L^2(\Omega)\times L^2(\Omega)$ space.
 Thus an eigenvalue $\lambda$ of $A_0^*$ in $L^q(\Omega)\times L^q(\Omega)$ for $1<q<2$ must be a non-negative real number.
 If $\lambda$ is an eigenvalue of $A_0$ for $q\in(2,\infty)$, then $\dim\operatorname{coker}(\lambda-A_0)\ge1$ since $\lambda-A_0$ is the Fredholm operator of index zero.
 This implies that $\bar{\lambda}$ is an eigenvalue of $A_0^*$ in $L^{q'}(\Omega)\times L^{q'}(\Omega)$ for $\frac1q+\frac{1}{q'}\equiv1$.
 Since $\bar{\lambda}$ must be real and non-negative, the proof is now complete.
\end{proof}

\subsection{Semi-simplicity} \label{SSSS}

We consider $A_0$ in $X_0=L^q(\Omega)\times L^q(\Omega)$.
\begin{lem} \label{LSS}
Assume that $\eta_w^e$ is a constant.
 Assume that zero is an eigenvalue of $A_0$.
 Then it is semi-simple.
 In other words, $\ker A_0\oplus\operatorname{im}A_0=X_0$.
\end{lem}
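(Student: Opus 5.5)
The plan is to use the standard characterization of semi-simplicity. By Lemma~\ref{LEGG} (and the compact-resolvent argument in its proof), $0$ is an isolated eigenvalue of $A_0$, and $A_0$ is Fredholm of index zero. So it suffices to show $\ker A_0^2=\ker A_0$. Once this holds, $\ker A_0\cap\operatorname{im}A_0=\{0\}$. Since $\operatorname{im}A_0$ is closed with codimension $\dim\ker A_0$, we then get $\ker A_0\oplus\operatorname{im}A_0=X_0$. So I take $U=(w,\theta)^T\in X_1$ with $V:=A_0U\in\ker A_0$ and aim to prove $V=0$.

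The first step uses the factorization \eqref{ERepA0} with $\alpha$ constant. Put $a=L^ew-\alpha\theta$ and $b=d'\theta$. Then
\[
V=\binom{La}{-\alpha\beta La+Lb}.
\]
By Lemma~\ref{LKer} we also have $V=(v,c_1)$ with $c_1$ constant and $L^ev=c_2$ (the constant term $\alpha c_1/d'$ is absorbed into $c_2$). Both components of $V$ lie in $\operatorname{im}L$ and hence have zero mean. Therefore $c_1=0$, $v=La$ is average free, and $L(b-\alpha\beta a)=0$. This gives $b=\alpha\beta a+k$ for a constant $k$. Substituting $\theta=(\alpha\beta a+k)/d'$ into the definition of $a$, and writing $s=\alpha^2\beta/d'\ge0$, yields
\[
(1+s)a=L^ew-\alpha k/d', \qquad (1+s)v=LL^ew .
\]

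The second step is an energy identity, carried out first for $q=2$. There $L$ and $L^e$ are self-adjoint (the latter because $\psi^e_{0ww}$ is real), and the case of general $q$ follows by elliptic regularity, since $w\in D(L^2)$ and $v$ are smooth. Pairing $(1+s)v=LL^ew$ with $a$ gives
\[
(1+s)\int_\Omega\bar a\,La\,dx=\int_\Omega \overline{L^eLa}\,w\,dx=\bar c_2\int_\Omega w\,dx .
\]
The left-hand side equals $(1+s)\|\nabla a\|_{L^2}^2\ge0$. Pairing $L^ev=c_2$ with $v$ and using $\int_\Omega v\,dx=0$ gives $\int_\Omega\bar v L^ev\,dx=0$, so $v$ lies in the null set of the quadratic form of $L^e$ on average-free functions. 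I would then show that $c_2=0$: pair $L^ev=c_2$ with $L^{-1}$-type test functions, or exploit the freedom to subtract an element of $\ker A_0$ from $U$. Given $c_2=0$, the first identity gives $\nabla a=0$, so $v=La=0$ and $V=0$.

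The main obstacle is eliminating the constant $c_2$, i.e.\ showing that the mean part of the equation $L^ev=c_2$ is compatible only with $v=0$. The first approach I would try is to modify $U$ by a suitable element $(\tilde w,\tilde c)\in\ker A_0$ (Corollary~\ref{CKer}) so as to normalize $\int_\Omega w\,dx$ to zero. The alternative is to use the nonnegativity hypothesis of Lemma~\ref{LEG}, which would force $v\in\ker B^e$, and then conclude from $v=La$ together with self-adjointness that $\|v\|_{L^2}^2=\langle La,v\rangle=\langle a,Lv\rangle$ vanishes.
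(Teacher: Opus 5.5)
Your reduction to $\ker A_0^2=\ker A_0$ and your first step are correct, and they follow the paper's route. The paper also describes $A_0U$ through the kernel, eliminates the constants, and arrives at $LL^ew=(1+s)v$. The gap is exactly the point you flag and leave open. You reach $(1+s)\|\nabla a\|_{L^2}^2=\bar c_2\int_\Omega w\,dx$ with $L^ev=c_2$, and neither of your suggestions kills the right-hand side.

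Since $\int_\Omega Lv\,dx=0$, you have $c_2|\Omega|=\int_\Omega\psi^e_{0ww}v\,dx$, which need not vanish when $w^e$ is not constant. Normalizing $\int_\Omega w\,dx=0$ by subtracting an element of $\ker A_0$ is impossible exactly when it would be needed. If $(\tilde w,\tilde c)^T\in\ker A_0$, then $L^e\tilde w=c'$ is constant by Lemma~\ref{LKer}. Symmetry of $L^e$ on $D(L)$ then gives $c_2\int_\Omega\tilde w\,dx=\int_\Omega\tilde w\,L^ev\,dx=\int_\Omega v\,L^e\tilde w\,dx=c'\int_\Omega v\,dx=0$. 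Corollary~\ref{CKer} would put $(1,0)^T$ into $\ker A_0$, but $A_0(1,0)^T=(L\psi^e_{0ww},-\alpha\beta L\psi^e_{0ww})^T$, which vanishes only if $\psi^e_{0ww}$ is spatially constant. Your alternative adds nothing: $v\in\ker B^e$ is already known, and $\langle a,Lv\rangle=\varepsilon^{-1}\langle a,c_2-\psi^e_{0ww}v\rangle$ has no reason to vanish. Moreover, Lemma~\ref{LSS} does not assume energetic stability.

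In fact no argument can remove this term in $X_0=L^q(\Omega)\times L^q(\Omega)$ under the stated hypotheses. Suppose $L^e$ is invertible and $z$ satisfies $L^ez=1$ and $\int_\Omega z\,dx=0$. This happens at a turning point of the mass along a branch of equilibria parametrized by the chemical potential, with $z=\partial_\mu w^e$. Put $a=L^{-1}z$ with zero mean, $\theta=\alpha\beta a/d'$ and $w=(1+s)(L^e)^{-1}a$. Then $L^ew-\alpha\theta=a$, so $A_0(w,\theta)^T=(z,0)^T\neq0$, while $A_0(z,0)^T=(L1,-\alpha\beta L1)^T=0$. Hence zero is not semi-simple, and consistently $\int_\Omega w\,dx=(1+s)\|\nabla a\|_{L^2}^2>0$.

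The paper's own proof passes over the same point with the one-line claim $\int_\Omega\overline{L^ev}\,w\,dx=0$ for $v\in\ker B^e$. That claim requires $L^ev=0$, not merely $L^ev$ constant. What closes your argument is an additional condition, either of the following:
\begin{itemize}
\item work in $X_{0\mathrm{av}}=L^q_{\mathrm{av}}(\Omega)\times L^q(\Omega)$, the space actually used in the proof of Theorem~\ref{TMS}, where $\int_\Omega w\,dx=0$ by definition;
\item assume $\int_\Omega\psi^e_{0ww}v\,dx=0$ for all $v\in\ker B^e$ (e.g.\ $\psi^e_{0ww}$ spatially constant, as for constant $w^e$), so that $c_2=0$.
\end{itemize}
In either case your identity gives $\nabla a=0$, hence $v=La=0$ and $V=0$.
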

\begin{proof}
Since $A_0$ is a Fredholm operator of index zero, i.e., $\dim(\operatorname{coker}A_0)=\dim(\ker A_0)$.
 Thus, it suffices to prove that
\begin{gather*}
    \ker A_0\cap \operatorname{im} A_0=\{0\},\quad\text{i.e.,} \\
    A_0(A_0U)=0\quad\text{implies}\quad A_0U=0 \quad\text{for} \quad 
    U\in D(A_0^2).
\end{gather*}
Since $A_0(A_0U)=0$ implies $A_0U\in\ker A_0$, Corollary~\ref{CKer} implies that
\[
    A_0U\in\left\{(v+c,c_1)^T \bigm|
    c,c_1\in\mathbb{C},\ 
    v\in\ker B^e\ \text{in}\ L_{\mathrm{av}}^q(\Omega) \right\}.
\]
In other words, $U=(w,\theta)^T$ satisfies
\begin{gather}
    LL^ew- L(\alpha\theta)=v+c, \label{ESS1}\\
    -\alpha\beta\left(LL^ew- L(\alpha\theta)\right)+d'L\theta=c_1. \label{ESS2}
\end{gather}
Since $\alpha=\eta_w^e$ is assumed to be constant, $c_1\in D(L)$ as well as $v+c\in D(L)$.
 Thus $c=c_1=0$.
 Plugging \eqref{ESS1} to \eqref{ESS2}, we obtain
\[
    -\alpha\beta v+d'L\theta=0.
\]
Using this identity to erase $\theta$ in \eqref{ESS1}, we get
\begin{equation} \label{ESS3}
    LL^ew-\frac{\alpha^2\beta}{d'}v=v.
\end{equation}
Since $v\in\operatorname{im}L$, there is a unique $u\in D(L)$ with $\int_\Omega u\,dx=0$ such that $v=Lu$.
 The identity \eqref{ESS3} now implies that
\begin{equation} \label{ESS4}
    L^ew=\left(1+\frac{\alpha^2\beta}{d'}\right)u
\end{equation}
by applying $L^{-1}$ to both sides of \eqref{ESS3}.
 Multiplying $\bar{v}$ to \eqref{ESS4} and integrating over $\Omega$ we obtain
\begin{equation} \label{ESS5}
    \int_\Omega \bar{v}L^ew\,dx
    =\left(1+\frac{\alpha^2\beta}{d'}\right)
    \int_\Omega \bar{v}u\,dx.
\end{equation}
Integrating by parts, we see that
\[
    \int_\Omega \bar{v}L^ew\,dx
    =\int_\Omega (L^e\bar{v})w\,dx
    =\int_\Omega (\overline{L^ev})w\,dx=0
\]
since $v\in\ker B^e$.
 The identity \eqref{ESS5} now yields
\[
    0=\int_\Omega \bar{v}u\,dx
    =\int_\Omega u\overline{Lu}\,dx
    =\int_\Omega |\nabla u|^2\,dx.
\]
(This calculation is justified for $q\ge2$.
 Moreover, from a standard elliptic regularity theory, it follows that $\ker B^e$ is independent of $q\in(1,\infty)$ and $w\in W^{2,q}(\Omega)$ for any $q\in(1,\infty)$.
 Thus the above calculation is justified for all $q\in(1,\infty)$.)
 This implies $\nabla u=0$ so that $u$ is a constant.
 Since $\int_\Omega u\,dx=0$, we conclude that $u=0$ which implies $v=0$.
 In the system \eqref{ESS1}--\eqref{ESS2}, we conclude that $v=0$ with $c_1=c=0$.
 This says that $U^T=(w,\theta)\in\ker A_0$ and this is exactly what we would like to prove.
 The proof is now complete.
\end{proof}

\subsection{Application of generalized stability principle} \label{SSAP}

Let $\Omega$ be a bounded domain in $\mathbb{R}^n$ or $\Omega=\mathbb{T}^n$.
 We consider \eqref{ECH1} (with \eqref{EBT1} if $\partial\Omega\neq\emptyset$).
 We always assume (CH1), i.e., $\gamma$, $\kappa$, $d$ and $\varepsilon$ are positive constants.
 Let $(w^e,\theta^e)$ be an equilibrium so that $\theta^e$ is a constant, i.e., $\theta^e\in\mathbb{R}$.
 Let us state our main stability result.
 We set
\[
    w_{\mathrm{av}}:=\frac{1}{|\Omega|} \int_\Omega w^e\,dx
\]
and assume that the equilibrium set $\mathcal{E}_{\theta^e}^{w_{\mathrm{av}}}$ is an $m$-dimensional manifold near $(w^e,\theta^e)$ and it depends on $\theta$ at least $C^1$ near $\theta^e$.
 We recall function spaces $X_0=L^q(\Omega)\times L^q(\Omega)$, $X_1=D(L^2)\times D(L)$ and related spaces $\mathbb{E}_0$, $\mathbb{E}_1$ and $X_\gamma$.
\begin{thm} \label{TMS}
Assume that $\psi_0$ is $C^4$ and (CH1).
 Let $(w^e,\theta^e)\in C^3(\bar{\Omega})\times \mathbb{R}$ be an equilibrium.
 Assume that $w^e$ is an energetically stable equilibrium and it is non-degenerate at $\theta=\theta^e$.
 Assume further that $\eta_w^e=-\psi_{0\theta w}(w^e,\theta^e)$ is spatially constant and $\kappa$, $\gamma$, $d$ satisfy $\kappa^2\gamma=\theta^e d$. 
 Assume \eqref{ESmall} for $(w^e,\theta^e)$, i.e.,
\[
    \eta_w(w^e,\theta^e)^2 \theta^e\gamma/d\le\delta_0.
\]
For the exponent assume that $p,q\in(1,\infty)$ satisfy $2/p+n/q<1$ and $4/p+n/(2q)<1$.
 Then, there exists (small) $\sigma>0$ such that if the initial data $w_0=w|_{t=0}$, $\theta_0=\theta|_{t=0}$ with $(w_0,\theta_0)\in X_\gamma$ satisfies
\[
    \left\|(w_0-w^e,\theta_0-\theta^e)\right\|_{X_\gamma}\le\sigma, \quad
    \frac{1}{|\Omega|}\int_\Omega w_0\,dx=w_{\mathrm{av}}
    :=\frac{1}{|\Omega|}\int_\Omega w^e\,dx,
\]
then there exists a unique global-in-time solution $(w,\theta)\in\mathbb{E}_1\cap C\left([0,\infty),X_\gamma\right)$ of \eqref{ECH1} (with \eqref{EBT1} if $\partial\Omega\neq\emptyset$) with initial data $(w_0,\theta_0)$.
 The spatial average satisfies
\[
    \frac{1}{|\Omega|}\int_\Omega w\,dx = w_{\mathrm{av}}.
\]
Moreover there exists $(w_*,\theta_*)\in\mathcal{E}$ (near $(w^e,\theta^e)$) satisfies $\int_\Omega w_*\,dx=|\Omega|w_{\mathrm{av}}$ such that
\[
    \left\|\left(w(t)-w_*,\theta(t)-\theta_*\right)\right\|_{X_\gamma}\le c_0 e^{-\lambda_0t}
\]
with some $c_0>0$ and $\lambda_0>0$.
\end{thm}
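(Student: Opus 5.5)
The plan is to apply the generalized stability principle, Proposition~\ref{PGST}, to \eqref{EAb1} with $\mathcal{A}=A$ and $\mathcal{F}=F$ given by \eqref{EOpA}--\eqref{EOpF2}, with $v_*=U^e=(w^e,\theta^e)^T$. Its linearization $\mathcal{A}_0$ is the operator $A_0$ of \eqref{EOpA0}, rescaled so that $\gamma=1$. Three hypotheses must be checked: regularity of $(\mathcal{A},\mathcal{F})$, maximal regularity of $A_0+\omega$, and linear normal stability of $U^e$. The conservation of $\int_\Omega w\,dx$ in Remark~\ref{RProS}(i) then shows that the limit $(w_*,\theta_*)\in\mathcal{E}$ has average $w_{\mathrm{av}}$.

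\emph{Regularity and maximal regularity.} Under $2/p+n/q<1$, $4/p+n/(2q)<1$ and $\psi_0\in C^4$, Proposition~\ref{PAR} gives that $(A,F)$ is $C^1$ from $X_\gamma$ to $\mathcal{L}(X_1,X_0)\times X_0$. Since $U^e\in C^3(\bar\Omega)\times\mathbb{R}$ lies in $X_\gamma$ and satisfies \eqref{ESmall}, Corollary~\ref{CMR} shows that $A(U^e)+\omega$ is $\mathcal{R}$-sectorial with angle $<\pi/2$. Next I compare $A_0$ with $A(U^e)$. The difference consists of $(A'(U^e)\tilde U)U^e-F'(U^e)\tilde U$. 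Because $\nabla\mu^e=0$ (Proposition~\ref{PST}), the derivative of $|\nabla\mu|^2$ vanishes at $U^e$. All remaining terms have order at most $2$ in $\tilde w$ and at most $1$ in $\tilde\theta$, with coefficients in $C(\bar\Omega)$ since $w^e\in C^3$. Such terms are relatively bounded with respect to $A(U^e)$ with arbitrarily small relative bound, via interpolation between $X_0$ and $X_1$. Hence \cite[Proposition~4.4.3]{PS} shows that $A_0+\omega$ is $\mathcal{R}$-sectorial for some larger $\omega$, and Lemma~\ref{LRMR} then gives $L^p$-maximal regularity on $(0,\infty)$.

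\emph{Normal stability.} (S1) and (S2): non-degeneracy says $\mathcal{E}^{w_{\mathrm{av}}}_{\theta^e}$ is an $m$-dimensional manifold with $m=\dim\ker B^e$. Since $\eta^e_w$ is constant, Proposition~\ref{PEq} together with the assumed $C^1$ dependence on the parameters shows that $\mathcal{E}$ is an $(m+2)$-dimensional $C^1$ manifold, parametrized by $\mathcal{E}^{w_{\mathrm{av}}}_{\theta^e}$, $w_{\mathrm{av}}$ and $\theta^e$. Corollary~\ref{CND} gives $\dim\mathcal{E}=\dim\ker A_0$. Differentiating a $C^1$ curve of equilibria $U(s)$ in the identity $A(U(s))U(s)=F(U(s))$ shows that its tangent vectors lie in $\ker A_0$. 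The tangent space is therefore a subspace of $\ker A_0$ of the same dimension, hence equal to it. (S3) is Lemma~\ref{LSS}. (S4): since $\kappa^2\gamma=\theta^e d$ means $d'\beta=1$, Lemma~\ref{LEGG} applies, provided its positivity hypothesis on $L^e$ holds. Energetic stability only gives $\int_\Omega\bar wL^ew\,dx\ge0$ for average-free $w$, but this suffices. In the proof of Lemma~\ref{LEG}, an eigenvector for $\lambda\ne0$ has $w\in\operatorname{im}L$, so $\int_\Omega w\,dx=0$, and the same holds for the adjoint argument. Thus every spectral point is a nonnegative real eigenvalue, and the eigenvalues form a discrete set. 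The point $0$ is therefore isolated, so there is $\omega>0$ with $\operatorname{Re}z\ge\operatorname{Re}z>\omega$ for all nonzero spectral points $z$, which is (S4).

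With all hypotheses verified, Proposition~\ref{PGST} yields a global solution converging exponentially in $X_\gamma$ to some $(w_*,\theta_*)\in\mathcal{E}$. Uniqueness follows from Theorem~\ref{TLW}. The main obstacle I expect is the maximal regularity of $A_0+\omega$. This requires carefully checking that every extra term coming from $A'(U^e)$ and $F'(U^e)$ is genuinely of lower order in the mixed-order scale $X_1=D(L^2)\times D(L)$, in particular the term $\theta\eta_w f(U)$ in the $\theta$-equation. A secondary delicate point is the identification of the tangent space in (S2), which relies on the parametrization in Proposition~\ref{PEq}.
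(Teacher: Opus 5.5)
Most of your verification holds up. The lower-order comparison of $A_0$ with $A(U^e)$ is sound, (S3) follows from Lemma~\ref{LSS}, and (S4) follows from Lemma~\ref{LEGG}. You also correctly observe that only average-free $w$ enter the quadratic form, so energetic stability is enough; the paper leaves this implicit.

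\textbf{The gap is in (S1)--(S2), and it comes from your choice of phase space.} You apply Proposition~\ref{PGST} in the full space $X_0=L^q(\Omega)\times L^q(\Omega)$ with $v_*=U^e$. You must therefore show that the whole set $\mathcal{E}$ near $U^e$, including equilibria of a \emph{different} mass, is a $C^1$ manifold of dimension $\dim\ker A_0$.

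For this you invoke $C^1$ dependence on both $w_{\mathrm{av}}$ and $\theta^e$. The theorem only assumes non-degeneracy at the fixed mass $w_{\mathrm{av}}$ and $C^1$ dependence on $\theta$, and smooth dependence on the mass does not follow from these.
\begin{itemize}
\item If $\ker B^e=\{0\}$, the implicit function theorem applied to $B^e$ does give it.
\item If $\dim\ker B^e\ge1$, a non-degenerate critical manifold of the mass-constrained energy can break up when the mass changes. The mass acts as a perturbation parameter, and Morse--Bott manifolds are not robust without, e.g., a symmetry.
\end{itemize}
In the second case $\mathcal{E}$ need not be a $C^1$ manifold of dimension $\dim\ker A_0$ near $U^e$. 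The full-space kernel $\{(w,c_1): L^ew\in\mathbb{C}\}$ need not have dimension $\dim\ker B^e+2$. The full-space conclusion (stability for all data near $U^e$, whatever their mass) can then fail. Note that you use the hypothesis $\frac{1}{|\Omega|}\int_\Omega w_0\,dx=w_{\mathrm{av}}$ only to identify the mass of the limit. That signals the argument is trying to prove more than the theorem states.

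\textbf{The missing idea is to use conservation of $\int_\Omega w\,dx$ and restrict to the invariant fixed-mass subspace.} The paper writes the problem for $\tilde U=U-U^e$ as \eqref{EEAb} in $X_{0\mathrm{av}}=L^q_{\mathrm{av}}(\Omega)\times L^q(\Omega)$ with $v_*=0$.
\begin{itemize}
\item It first replaces the entries $-\eta_wL$ and $\kappa^{-1}\theta\eta_w^2L$ of \eqref{EOpA} by $-L(\eta_w\cdot)$ and $\kappa^{-1}\theta\eta_wL(\eta_w\cdot)$, adjusting $F$ accordingly. Then $\tilde A(\tilde U)$ preserves the average-free condition in the first component.
\item In this space the relevant equilibrium set is $\mathcal{E}^{w_{\mathrm{av}}}$. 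It has dimension $\dim\ker B^e+1$, with only $\theta$ varying, which is exactly what the hypotheses control.
\item Since $\eta_w^e$ is constant, $\ker A_0\cap X_{1\mathrm{av}}=\ker B^e\times\mathbb{C}$ has the same dimension.
\end{itemize}
Your tangent-space argument (tangent vectors of curves of equilibria lie in $\ker A_0$, and the dimensions agree) then gives (S2). Your remaining verifications carry over to $X_{0\mathrm{av}}$ unchanged.
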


We shall prove this result by applying Proposition~\ref{PGST} (\cite[Theorem~5.3.1]{PS}).
 For this purpose we write an equation \eqref{EAb1}, which is an abstract form of \eqref{ECH1} (with \eqref{EBT1} if $\partial\Omega\neq\emptyset$).
 Let $U^e=(w^e,\theta^e)^T$ and set $\tilde{U}=U-U^e$.
 Since
\begin{align*}
    A(U)U-A(U^e)U^e
    &=A(U)(U-U^e)+\left(A(U)-A(U^e)\right)U^e \\
    &=A(\tilde{U}+U^e)\tilde{U}+\left(A(\tilde{U}+U^e)-A(U^e)\right)U^e,
\end{align*}
the abstract equation \eqref{EAb1} can be written as
\begin{equation}
\begin{aligned} \label{EEAb}
    &\frac{d\tilde{U}}{dt}+\tilde{A}(\tilde{U})\tilde{U}=\tilde{F}(\tilde{U}), \quad
    \tilde{A}(\tilde{U})=A(\tilde{U}+U^e), \\
    &\tilde{F}(\tilde{U})=F(\tilde{U}+U^e)-F(U^e)
    -\left(A(\tilde{U}+U^e)-A(U^e)\right)U^e.
\end{aligned}
\end{equation}
We consider \eqref{EEAb}.
 We notice that the linearized operator near $\tilde{U}=0$ is $A_0$.
 We also note that $\int_\Omega w\,dx$ must be conserved as time develops.
 Thus it is more convenient to work in the space
\[
    X_{0\mathrm{av}}=L_{\mathrm{av}}^q(\Omega)\times L^q(\Omega), \quad
    L_{\mathrm{av}}^q(\Omega)=\left\{w\in L^q(\Omega) \biggm|
    \int_\Omega w\,dx=0 \right\}
\]
for $\tilde{U}$.
 We define $X_{1\mathrm{av}}=X_1\cap X_{0\mathrm{av}}$ and consider \eqref{EEAb} in
\[
    \mathbb{E}_{1\mathrm{av}}=W^{1,p}(0,T;X_{0\mathrm{av}})
    \cap L^p(0,T;X_{1\mathrm{av}})
\]
for initial data in $X_{\gamma\mathrm{av}}=X_\gamma\cap X_{0\mathrm{av}}$.
 However, we have to notice that $\tilde{A}$ may not preserve the average free condition if we take $A$ as in \eqref{EOpA}.
 We have to take
\[
    A(U)
    =\begin{pmatrix}
    \varepsilon L^2 & -L(\eta_w\cdot) \\
    -\kappa^{-1}\theta\eta_w\varepsilon L^2 & \kappa^{-1}\left(dL+\theta\eta_w L(\eta_w \cdot)\right)
    \end{pmatrix}
\]
and change $F(U)$ accordingly.
 Then $\tilde{A}(\tilde{U})=A(\tilde{U}+U^e)$ preserves the average free condition for $w$ since the image of $L$ is always average-free.
 With these preparations, we apply Proposition~\ref{PGST}.
\begin{proof}[Proof of Theorem~\ref{TMS}]
We set $(\mathcal{A},\mathcal{F})=(\tilde{A},\tilde{F})$ and $v_*=0$.
 By the change of definition, $\tilde{A}$ preserves the average free condition for the first component.
 The $C^1$ property of $(\tilde{A},\tilde{F})$ can be proved as in Proposition~\ref{PAR}.
 Our space $X_{0\mathrm{av}}$ is still of class $\mathcal{HT}$.
 By Corollary~\ref{CMR}, $\mathcal{A}_0+\omega$ for $\mathcal{A}_0=A_0$ is $\mathcal{R}$-sectorial of angle less than $\pi/2$ for some $\omega>0$ since our original $A$ and the linearized operator $A_0$ is the same up to lower order terms \cite[Proposition~4.4.3]{PS}.

Let us discuss spectral properties of $\mathcal{A}_0=A_0$, which is the linearized operator.
 By our assumption
\[
    \mathcal{E}^{w_{\mathrm{av}}}= \left\{(w,\theta)\in\mathcal{E} \bigm|
    w\in\mathcal{E}_\theta^{w_{\mathrm{av}}} \right\}
\]
is $m=k+1$ dimensional manifold near $(w^e,\theta^e)$ if $\mathcal{E}_\theta^{w_{\mathrm{av}}}$ is a $k$-dimensional manifold which may be an isolated point.
 The property (S1) holds.

If we assume that $\eta_w^e$ is a constant, (S2) follows from Corollary~\ref{CND}.

By the energetic stability of $w^e$, we conclude that any spectrum of $A_0$ must be a non-negative real number and discrete according to Lemma~\ref{LEGG}.
 Thus (S4) is proved.
 Here we invoke the assumption $\theta^e=\kappa^2\gamma/d$.

The property (S3) follows from Lemma~\ref{LSS}.
 We now apply Proposition~\ref{PGST} to get the desired result.
\end{proof}

We conclude this section by considering a special case when $w^e$ is a constant as discussed in \cite[Introduction]{GKTs}.
 In this case it is easy to see that $w_{\mathrm{av}}=w^e$ and if $-\psi_{0ww}(w^e,\theta^e)<\varepsilon\lambda_{\min}$, then $w^e$ is an energetically stable non-degenerate equilibrium.
 Here $\lambda_{\min}$ denotes the minimal positive eigenvalue of the Neumann Laplacian $L$.
 Moreover, $\mathcal{E}_{\theta^e}^{w_{\mathrm{av}}}=\{w_{\mathrm{av}}\}$ and $\mathcal{E}^{w_{\mathrm{av}}}$ near $\theta=\theta^e$ equals $\left\{(w_{\mathrm{av}},\theta)\bigm|\theta\in\mathbb{R}\right\}$.
\begin{cor} \label{CMS}
Assume the same hypotheses of Theorem~\ref{TMS}. 
 Assume that $-\psi_{0ww}(w^e,\theta^e)<\varepsilon\lambda_{\min}$ so that $w^e$ is a constant.
 Then the limit $(w_*,\theta_*)$ must satisfy $w_*=w^e=w_{\mathrm{av}}$.
\end{cor}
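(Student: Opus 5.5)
The plan is to combine the conclusion of Theorem~\ref{TMS} with a direct description of the equilibrium set near $(w^e,\theta^e)$ when $w^e$ is a constant satisfying $-\psi_{0ww}(w^e,\theta^e)<\varepsilon\lambda_{\min}$.

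Theorem~\ref{TMS} already gives a global solution converging exponentially in $X_\gamma$ to some $(w_*,\theta_*)\in\mathcal{E}$ close to $(w^e,\theta^e)$. Moreover, $\int_\Omega w_*\,dx=|\Omega|w_{\mathrm{av}}$ by conservation of mass (Remark~\ref{RProS}~(i)) together with convergence in $X_\gamma\subset C(\bar\Omega)$. So it suffices to show the following: every equilibrium $(w,\theta)$ with average $w_{\mathrm{av}}$, with $\theta$ near $\theta^e$, and with $w$ near $w^e$ in $X_\gamma$ (hence in $C^1(\bar\Omega)$ by \eqref{ESo}) satisfies $w\equiv w_{\mathrm{av}}$.

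By Proposition~\ref{PST}, an equilibrium has constant $\theta$ and constant $\mu$, i.e.
\[
\psi_{0w}(w,\theta)-\varepsilon\Delta w=\mu_0 .
\]
Set $v=w-w_{\mathrm{av}}$, which is average free and satisfies $\partial v/\partial\nu=0$. Subtracting the constant $\psi_{0w}(w_{\mathrm{av}},\theta)$ gives
\[
\varepsilon L v+a(x)v=c,\qquad a(x)=\int_0^1\psi_{0ww}\bigl(w_{\mathrm{av}}+sv(x),\theta\bigr)\,ds ,
\]
for some constant $c$. Next I multiply by $\bar v$ and integrate; the constant drops out because $\int_\Omega v\,dx=0$. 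This yields
\[
\varepsilon\|\nabla v\|_2^2+\int_\Omega a|v|^2\,dx=0 .
\]
The Poincaré inequality for average-free functions gives $\|\nabla v\|_2^2\ge\lambda_{\min}\|v\|_2^2$. Since $\psi_0\in C^4$ and $(w,\theta)$ is uniformly close to $(w^e,\theta^e)=(w_{\mathrm{av}},\theta^e)$, we have
\[
a\ge\psi_{0ww}(w^e,\theta^e)-o(1)>-\varepsilon\lambda_{\min}+\tau
\]
for some fixed $\tau>0$. Hence $\tau\|v\|_2^2\le0$, so $v=0$ and $w_*=w_{\mathrm{av}}=w^e$.

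The main obstacle is justifying that $(w_*,\theta_*)$ is close enough for the bound on $a$ to hold. Concretely, I need $\|w_*-w^e\|_\infty$ and $|\theta_*-\theta^e|$ to be small, say by shrinking $\sigma$. This follows because Proposition~\ref{PGST} yields a limit within a prescribed neighbourhood of $v_*$ (its proof keeps the solution in a small ball for all $t$). Alternatively, one can use the description in the preceding paragraph that $\mathcal{E}^{w_{\mathrm{av}}}$ near $\theta^e$ is $\{(w_{\mathrm{av}},\theta)\}$, which gives the conclusion directly. I would present the energy argument above as the justification of that description.
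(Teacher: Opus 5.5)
Your proposal is correct and follows the paper's route: the limit from Theorem~\ref{TMS} has average $w_{\mathrm{av}}$ and lies in $\mathcal{E}$ near $(w^e,\theta^e)$, and near $\theta^e$ the set $\mathcal{E}^{w_{\mathrm{av}}}$ consists only of the constants $(w_{\mathrm{av}},\theta)$. The paper just asserts this local description of the equilibria as easy to see, and your energy/Poincar\'e estimate (with the constant dropping out by the average-free condition) is a valid, local justification of it.
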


\section*{Acknowledgements}
This work was done as a part of research activities of Social Cooperation Program ``Mathematical Science for Refrigerant Thermal Fluids" sponsored by Daikin Industries, Ltd.\ at the University of Tokyo.
 The authors are grateful to members of the Technology and Innovation Center of Daikin Industries, Ltd.\ for showing several interesting phenomena related to phase transition with fruitful discussions which triggered this work.
 The work of the second author was partly supported by Japan Society for the Promotion of Science (JSPS) through the grants KAKENHI: JP24K00531, JP24H00183 and Daikin Industries, Ltd.\ through collaborative grants.
 The work of the fourth author was partly supported by JSPS through the grant JP22K13946.

\section*{Conflicts of Interest}
The authors declare no conflicts of interest.

\section*{Data Availability Statement}
No datasets were generated or analyzed during the current study.


\begin{thebibliography}{XXXX}

\bibitem[AGG]{AGG}
H.~Abels, H.~Garcke and G.~Gr\"un, 
Thermodynamically consistent, frame indifferent diffuse interface models for incompressible two-phase flows with different densities. 
{\it Math.\ Models Methods Appl.\ Sci.}\ {\bf22} (2012), no.~3, 1150013.

\bibitem[AH]{AH}
A.~Agresti and A.~Hussein, 
Maximal $L^p$-regularity and $H^\infty$-calculus for block operator matrices and applications. 
{\it J.\ Funct.\ Anal.}\ {\bf285} (2023), no.~11, Paper No.~110146.

\bibitem[AP]{AP}
H.~W.~Alt and I.~Pawlow, 
A mathematical model of dynamics of non-isothermal phase separation. 
{\it Physica D: Nonlinear Phenomena} {\bf59} (1992), 389--416.

\bibitem[BD]{BD}
D.~Bothe and W.~Dreyer, 
Continuum thermodynamics of chemically reacting fluid mixtures. 
{\it Acta Mech.}\ {\bf226} (2015), no.~6, 1757--1805.

\bibitem[BS]{BS}
D.~Bothe and K.~Soga, 
Thermodynamically consistent modeling for dissolution/growth of bubbles in an incompressible solvent. 
{\it Recent developments of mathematical fluid mechanics,} 111--134. Adv.\ Math.\ Fluid Mech. 
{\it Birkh\"auser/Springer, Basel,} 2016.

\bibitem[Ca]{Ca}
G.~Caginalp, 
The dynamics of a conserved phase field system: Stefan-like, Hele-Shaw, and Cahn-Hilliard models as asymptotic limits.
{\it IMA J.\ Appl.\ Math.}\ {\bf44} (1990), no.~1, 77--94.

\bibitem[C]{C}
J.~W.~Cahn, 
Free energy of a nonuniform system. I\hspace{-0.1em}I. Thermodynamic basis. 
{\it J.\ Chem.\ Phys.}\ {\bf30} (1959), 1121--1124.

\bibitem[CH1]{CH1}
J.~W.~Cahn and J.~E.~Hilliard, 
Free energy of a nonuniform system. I. Interfacial free energy. 
{\it J.\ Chem.\ Phys.}\ {\bf28} (1958), 258--267.

\bibitem[CH2]{CH2}
J.~W.~Cahn and J.~E.~Hilliard, 
Free energy of a nonuniform system. I\hspace{-0.1em}I\hspace{-0.1em}I. Nucleation in a two-component incompressible fluid.
{\it J.\ Chem.\ Phys.}\ {\bf31} (1959), 688--699.

\bibitem[CGR]{CGR}
M.~Conti, A.~Giorgini and G.~Ricchi, 
Global weak solutions to the Cahn-Hilliard equation with degenerate mobility and singular diffusion. 
arXiv: 2608.06371

\bibitem[DLSS]{DLSS}
F.~De Anna, C.~Liu, A.~Schl\"omerkemper and J.-E.~Sulzbach, 
Temperature dependent extensions of the Cahn-Hilliard equation. {\it Nonlinear Anal.\ Real World Appl.}\ {\bf77} (2024), Paper No.~104056.

\bibitem[DK]{DK}
R.~Denk and M.~Kaip, 
General parabolic mixed order systems in $L_p$ and applications. 
Oper.\ Theory Adv.\ Appl., 239. 
{\it Birkh\"auser/Springer, Cham,} 2013. 

\bibitem[FK1]{FK1}
H.~Freist\"uhler and M.~Kotschote, 
Phase-field and Korteweg-type models for the time-dependent flow of compressible two-phase fluids. 
{\it Arch.\ Ration.\ Mech.\ Anal.}\ {\bf224} (2017), 1--20.

\bibitem[FK2]{FK2}
H.~Freist\"uhler and M.~Kotschote, 
Phase-field descriptions of two-phase compressible fluid flow: Interstitial working and a reduction to Korteweg theory. 
{\it Quart.\ Appl.\ Math.}\ {\bf77} (2019), 489--496.

\bibitem[GKL]{GKL}
M.-H.~Giga, A.~Kirshtein and C.~Liu, 
Variational modeling and complex fluids. 
{\it Handbook of mathematical analysis in mechanics of viscous fluids,} 73--113.
{\it Springer, Cham,} 2018.

\bibitem[GKTs]{GKTs}
Y.~Giga, N.~Kajiwara and K.~Tsuda, 
On dynamic stability of energetically stable equilibria of the Navier-Stokes-Korteweg flows. 
preprint. arXiv:2608.05654

\bibitem[GK]{GK}
Y.~Giga and S.~Katayama, 
Asymptotically self-similar graph-like solutions to a multi-dimensional surface diffusion flow equation under contact angle and no-flux boundary conditions. 
{\it Discrete Contin.\ Dyn.\ Syst.}\ {\bf56} (2026), 285--314.

\bibitem[HM]{HM}
M.~Heida and J.~M\'alek, 
On compressible Korteweg fluid-like materials. 
{\it Internat.\ J.\ Engrg.\ Sci.}\ {\bf48} (2010), 1313--1324.

\bibitem[KW]{KW}
P.~C.~Kunstmann and L.~Weis, 
Maximal $L_p$-regularity for parabolic equations, Fourier multiplier theorems and $H^\infty$-functional calculus. {\it Functional analytic methods for evolution equations,} 65--311. 
Lecture Notes in Math., 1855.
{\it Springer-Verlag, Berlin,} 2004.

\bibitem[LT]{LT}
J.~Lowengrub and L.~Truskinovsky, 
Quasi-incompressible Cahn-Hilliard fluids and topological transitions. 
{\it R.\ Soc.\ Lond.\ Proc.\ Ser.\ A Math.\ Phys.\ Eng.\ Sci.}\ {\bf454} (1998), 2617--2654.

\bibitem[L2]{L2}
A.~Lunardi, 
Interpolation theory. Third edition.  
Appunti.\ Sc.\ Norm.\ Super.\ Pisa (N.~S.) {\bf16} [Lecture Notes. Scuola Normale Superiore di Pisa (New Series)].
{\it Edizioni della Normale, Pisa,} 2018.

\bibitem[MP]{MP}
J.~M\'alek and V.~Pr\r{u}\v{s}a, 
Derivation of equations for continuum mechanics and thermodynamics of fluids. 
{\it Handbook of mathematical analysis in mechanics of viscous fluids,} 3--72.
{\it Springer, Cham,} 2018.

\bibitem[MaS]{MaS}
A.~Marveggio and G.~Schimperna, 
On a non-isothermal Cahn-Hilliard model based on a microforce balance. 
{\it J.\ Differential Equations} {\bf274} (2021), 924--970.

\bibitem[Mi]{Mi}
A.~Miranville, 
The Cahn-Hilliard equation: Recent advances and applications.
CBMS-NSF Regional Conf.\ Ser.\ in Appl.\ Math., 95. 
{\it Society for Industrial and Applied Mathematics (SIAM), Philadelphia, PA,} 2019.

\bibitem[MS]{MS}
A.~Miranville and G.~Schimperna, 
Nonisothermal phase separation based on a microforce balance. 
{\it Discrete Contin.\ Dyn.\ Syst.\ Ser.\ B} {\bf5} (2005), no.~3, 753--768.

\bibitem[PS]{PS}
J.~Pr\"uss and G.~Simonett, 
Moving interfaces and quasilinear parabolic evolution equations. 
Monographs in Mathematics 105, 
{\it Birkh\"auser,} 2016.

\bibitem[PW]{PW}
M.~H.~Protter and H.~F.~Weinberger, 
Maximum principles in differential equations. 
Corrected reprint of the 1967 original. 
{\it Springer-Verlag, New York,} 1984.

\bibitem[Sa]{Sa}
Y.~Sawano, 
Theory of Besov spaces. 
Dev.\ Math.\ {\bf56}. 
{\it Springer, Singapore,} 2018.

\end{thebibliography}
\end{document}